\theoremstyle{plain}
    \newtheorem{theorem}{Theorem}[section]
    \newtheorem{lemma}[theorem]{Lemma}
    \newtheorem{proposition}[theorem]{Proposition}
    \newtheorem{corollary}[theorem]{Corollary}
\theoremstyle{definition}
    \newtheorem{definition}{Definition}[section]
    \newtheorem{claim}{Claim}
    \newtheorem{remark}{Remark}[section]
    \newtheorem*{acknowledgement}{Acknowledgement}
\theoremstyle{remark}
    \newtheorem*{notation}{Notation}
\numberwithin{equation}{section}
\renewcommand{\l}{\left}
\renewcommand{\r}{\right}
\newcommand{\cleq}{\lesssim}
\def\norm#1{\left\Vert #1 \right\Vert} 
\newcommand{\C}{\mathbb{C}}
\newcommand{\N}{\mathbb{N}}
\newcommand{\R}{\mathbb{R}}
\DeclareMathOperator{\im}{Im}
\DeclareMathOperator{\re}{Re}
\begin{document}

\title[Scattering for NLS system without mass-resonance]{Scattering for the quadratic nonlinear Schr\"{o}dinger system in $\R^5$ without mass-resonance condition}

\author[M.Hamano]{Masaru Hamano}
\address{Course of Mathematics, Programs in Mathematics, Electronics and Informatics, Graduate School of Science and Engineering, Saitama University, Shimo-Okubo 255, Sakura-ku, Saitama-shi, 338-8570, Japan}
\email{m.hamano.733@ms.saitama-u.ac.jp}

\author[T.Inui]{Takahisa inui}
\address{Department of Mathematics, Graduate School of Science, Osaka University, Toyonaka, Osaka, 560-0043, Japan}
\email{inui@math.sci.osaka-u.ac.jp}

\author[K.Nishimura]{Kuranosuke Nishimura}
\address{Department of Mathematics, Graduate School of Science, Tokyo University of Science, 1-3 Kagurazaka, Shinjuku-ku, Tokyo 162-8601, Japan}
\email{1117614@ed.tus.ac.jp}

\keywords{Mass-resonance, virial identity, radial symmetry, scattering, quadratic Schr\"{o}dinger system}
\subjclass[2010]{35Q55}
\maketitle

\begin{abstract}
We consider the quadratic nonlinear Schr\"{o}dinger system (NLS system)
\begin{align*}
	\begin{cases}
		i\partial_t u + \Delta u = v \overline{u}, 
		\\
		i\partial_t v+\kappa \Delta v = u^2, 
	\end{cases}
	\text{ on } I \times \R^5,
\end{align*}
where $\kappa>0$. The scattering below the standing wave solutions for NLS system was obtained by the first author when $\kappa = 1/2$.  The condition of $\kappa=1/2$ is called mass-resonance. In this paper, we prove scattering below the standing wave solutions when $\kappa \neq 1/2$ under the radially symmetric assumption. Our proof is based on the concentration compactness and the rigidity  by Kenig--Merle \cite{KeMe06}. 
Moreover, we discuss the concentration compactness and the rigidity for non-radial solutions. 
\end{abstract}

\tableofcontents


\section{Introduction}
\subsection{Background}

We consider
\begin{align}
\label{NLS'}
	\begin{cases}
	i\partial_t u+\frac{1}{2m}\Delta u=\lambda \overline{u}v,
	\\
	i\partial_t v+\frac{1}{2M}\Delta v=\mu u^2,
	\end{cases}
	\text{ on } I \times \R^d,
\end{align}
where $1\leq d \leq 6$, $(u,v)$ is a $\C^2$-valued unknown function, and $m,M>0$, $\lambda ,\mu \in \C \setminus \{0\}$ are constants.
From physical viewpoint, \eqref{NLS'} is related to the Raman amplification in a plasma. See \cite{CoCoOh09} for details. 
The equation \eqref{NLS'} is invariant under the scaling $\alpha^2(u,v)(\alpha^2t,\alpha x)$ for $\alpha >0$. From this point of view, the critical regularity of the Sobolev space is $s_c=d/2-2$. Therefore, the equation \eqref{NLS'} is called $L^2$-subcritical if $d\leq3$, $L^2$-critical if $d=4$, $\dot{H}^{1/2}$-critical if $d=5$, and $\dot{H}^1$-critical if $d=6$. If $\lambda =c\bar{\mu}$ for some $c>0$, then the mass and the energy are conserved.
In this paper, we focus on the $\dot{H}^{1/2}$-critical case with conservation laws, i.e., $d=5$ and $\lambda =c\bar{\mu}$.
By considering the equation for $(\sqrt{c}|\mu |u(t,x/\sqrt{2m}),c\bar{\mu}v(t,x/\sqrt{2m}))$, we may assume $m=1/2$, $\lambda =\mu =1$.
Thus, we consider the following quadratic nonlinear Schr\"{o}dinger system: 
\begin{align}
\label{NLS}
\tag{NLS}
	\begin{cases}
		i\partial_t u + \Delta u = v \overline{u}, \qquad \text{ on } I \times \R^5,
		\\
		i\partial_t v+\kappa \Delta v = u^2,  \qquad \text{ on } I \times \R^5,
		\\
		(u(0),v(0))=(u_0,v_0) \in H^1(\R^d)\times H^1(\R^5),
	\end{cases}
\end{align}
where $\kappa>0$. The equation \eqref{NLS} has three conserved quantities, i.e., the mass, the energy, and the momentum, which are defined by 
\begin{align}
	\tag{Mass} 
	M(u,v)&:= \norm{u}_{L^2}^2 + \norm{v}_{L^2}^2,
	\\
	\tag{Energy}
	E(u,v)&:= \norm{\nabla u}_{L^2}^2 + \frac{\kappa}{2} \norm{\nabla v}_{L^2}^2 + \re \int_{\R^5} \overline{v}u^2 dx,
	\\
	\tag{Momentum}
	P(u,v)&:= \im \int_{\R^5} \left(\bar{u}\nabla u +\frac{1}{2} \bar{v}\nabla v\right) dx.
\end{align}
The local well-posedness in $H^1(\R^5) \times H^1(\R^5)$, and the existence of the ground sate standing wave solutions were shown by Hayashi, Ozawa, and Tanaka \cite{HaOzTa13}. We recall the ground state. The system \eqref{NLS} has a standing wave solution of the form
\begin{align}
\label{eq1.2}
	(u,v)=( e^{it}\phi (x), e^{2it}\psi (x))
\end{align}
with $\R$-valued functions $\phi ,\psi$.
In fact, if \eqref{eq1.2} is a solution of \eqref{NLS}, then $(\phi ,\psi )$ should satisfy the following system of elliptic equations 
\begin{equation}
\label{SE}
\begin{cases}
	-\phi +\Delta \phi =\phi\psi ,& \text{ in  } \R^5,
	\\
	-2\psi +\kappa \Delta \psi =\phi ^2, & \text{ in  } \R^5.
\end{cases}
\end{equation}
The solution of this elliptic equation \eqref{SE}  can be characterized by the variational argument. Namely, the minimal mass-energy solutions exist and they are called ground states. Roughly speaking, they are characterized by the Pohozaev functional $K$, which is defined by
\begin{align*}
	K( u, v )
	:= \norm{\nabla u}_{L^2}^2+\frac{\kappa}{2} \norm{\nabla v}_{L^2}^2 
	+\frac{5}{4}\re \int_{\R^5} \overline{v}u^2dx.
\end{align*}
We note that $K(\phi,\psi)=0$ if $(\phi ,\psi )$ is a solution of \eqref{SE}. 
Recently, under the mass-resonance condition $\kappa =1/2$, scattering below the ground state was obtained by the first author \cite{Ham18p}, where scattering means that the solution of nonlinear system \eqref{NLS} approaches to a free solution to the Schr\"{o}dinger equations as time goes to infinity. The first author also proved the blow-up or grow-up result below the ground state in \cite{Ham18p}. 
In the mass-resonance case, \eqref{NLS} has the Galilean invariance. That is, $(e^{ix\cdot \xi}e^{-it|\xi|^2} u(t,x-2t\xi), e^{2ix\cdot \xi} e^{-2it|\xi|^2}v(t,x-2t\xi))$ for any $\xi \in \mathbb{R}^5$ is solution to \eqref{NLS} if $(u,v)$ is a solution. This invariance plays an important role in the argument of the first author \cite{Ham18p}.
Roughly speaking, \eqref{NLS} with the mass-resonance condition is similar to the single nonlinear Schr\"{o}dinger equation, whose global dynamics were investigated by many researchers (see e.g. \cite{HoRo08,FaXiCa11,AkNa13,DoMu17,DoMu17p} and references therein).  

On the other hand, it is not clear whether \eqref{NLS} is similar to the single one in general. Thus, there are few works related to the global dynamics of the NLS system in the non mass-resonance case. Recently, the second author, Kishimoto, and the third author \cite{InKiNi18sp} obtained the scattering below the ground state in the $L^2$-critical case, i.e., $d=4$, under the assumption of radial symmetry. Moreover, they also showed the finite time blow-up result below the ground state under the assumption of radial symmetry in \cite{InKiNi18bp}. 

In the present paper, we are interested in the scattering below the ground state for \eqref{NLS} in the non mass-resonance case. 

\subsection{Main results}

In this section, we give main results in this paper. 
We obtain the following scattering result for the radial solutions.

\begin{theorem}
\label{thm1.1}
Let $\kappa \neq 1/2$, and $(\phi,\psi)$ denote a ground state. Assume that $(u_0,v_0) \in H^1(\R^5) \times H^1(\R^5)$ is radially symmetric and satisfies $E(u_0,v_0)M(u_0,v_0)<E(\phi,\psi)M(\phi,\psi)$ and $K(u_0,v_0)\geq 0$. Then, the solution scatters. That is, the solution exists globally in time and there exist $(u_{\pm},v_{\pm}) \in H^1(\R^5) \times H^1(\R^5)$ such that 
\begin{align*}
	\| (u(t),v(t)) -(e^{it\Delta} u_{\pm}, e^{it\kappa \Delta} v_{\pm}) \|_{H^1 \times H^1} \to 0 \text{ as } t \to \pm \infty.
\end{align*} 
\end{theorem}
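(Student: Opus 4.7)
The plan is to execute the Kenig--Merle concentration-compactness and rigidity scheme, treating $(u,v)$ as a single $\C^2$-valued field and handling the two Schr\"odinger propagators $e^{it\Delta}$ and $e^{it\kappa\Delta}$ in parallel. Set $\delta_0:=E(\phi,\psi)M(\phi,\psi)$ and let $\delta_c\in(0,\delta_0]$ be the supremum of those $\delta>0$ for which every radial datum with $EM<\delta$ and $K\ge 0$ produces a scattering solution. The goal is $\delta_c\ge\delta_0$, so assume for contradiction $\delta_c<\delta_0$.

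First, I would establish variational coercivity. Via the sharp Gagliardo--Nirenberg-type inequality for the coupling $\re\int\bar v u^2$, on the sub-threshold set $\{EM<\delta_0,\ K\ge 0\}$ one obtains
\begin{align*}
\norm{\nabla u}_{L^2}^2+\tfrac{\kappa}{2}\norm{\nabla v}_{L^2}^2\ \ceq\ E(u,v),\qquad K(u,v)\ \cgeq\ \norm{\nabla u}_{L^2}^2+\tfrac{\kappa}{2}\norm{\nabla v}_{L^2}^2,
\end{align*}
with constants depending only on the gap to the threshold. Combined with Strichartz bounds for $e^{it\Delta}$ and $e^{it\kappa\Delta}$ at the $\dot H^{1/2}$-critical scaling, this gives small-data scattering, a perturbation/long-time stability lemma, and uniform $H^1\times H^1$ bounds for all sub-threshold solutions. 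Next, for bounded radial sequences in $H^1(\R^5)\times H^1(\R^5)$ I would set up a linear profile decomposition whose only non-compact parameters are dyadic scales $\lambda_n^j>0$ and time shifts $t_n^j\in\R$; spatial translations are killed by radiality, and Galilean boosts do not enter, which is precisely how the absent symmetry for $\kappa\ne 1/2$ is circumvented. Applying the decomposition to a minimizing non-scattering sequence, Pythagorean additivity of $M$ and $E$ together with the perturbation lemma forces a single profile to carry the entire sub-threshold mass-energy, yielding a minimal radial solution $(u_c,v_c)$ whose trajectory is precompact in $H^1\times H^1$ modulo a continuous scale $\lambda(t)$; a separate argument using mass conservation and radial decay rules out $\lambda(t)\to 0,\infty$, so one may reduce to $\lambda\equiv 1$.

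The rigidity step is a truncated virial. Pick a smooth radial cutoff $\phi_R(x)=R^2\psi(|x|/R)$ with $\psi$ equal to $|x|^2$ for $|x|\le 1$ and bounded at infinity, and set
\begin{align*}
V_R(t):=\int_{\R^5}\phi_R(x)\bigl(|u_c(t,x)|^2+|v_c(t,x)|^2\bigr)\,dx.
\end{align*}
Direct computation from the system yields
\begin{align*}
\frac{d^2}{dt^2}V_R(t)\ =\ 8\,K(u_c,v_c)\ +\ \mathrm{Err}_R(t),
\end{align*}
where $\mathrm{Err}_R$ collects gradient-quadratic and cubic-coupling terms localized in $|x|>R$ together with an extra mismatch proportional to $(1-2\kappa)$ coming from the failure of mass-resonance. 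The radial Sobolev embedding $\norm{u}_{L^\infty(|x|>R)}\cleq R^{-2}\norm{u}_{H^1}$ and precompactness of the orbit dominate $\mathrm{Err}_R$ for $R$ sufficiently large, while the coercive lower bound $K(u_c,v_c)\cgeq 1$ from the variational step yields $\ddot V_R\cgeq 1$. Integrating over a long time interval then contradicts the a priori bound $V_R(t)\cleq R^2$, forcing $(u_c,v_c)\equiv 0$ and closing the argument.

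The main obstacle is the rigidity step: the non-mass-resonant virial produces a genuinely new term that has no definite sign and is not controlled by any conserved quantity, representing the analytic cost of the lost Galilean symmetry. It is precisely here that the radial hypothesis is indispensable, as the $R^{-2}$ pointwise decay it provides is what ultimately allows the mismatch to be absorbed by the main coercive contribution. A secondary delicate point is ruling out scale degeneration $\lambda(t)\to 0,\infty$ for the critical element without Galilean tools, which requires a two-field adaptation of the standard no-waste/mass-conservation argument tailored to the asymmetric dispersions $\Delta$ and $\kappa\Delta$.
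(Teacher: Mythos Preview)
Your overall Kenig--Merle scheme is sound, but the rigidity step contains a genuine gap. The second-order virial you propose does \emph{not} satisfy $\ddot V_R=8K+\mathrm{Err}_R$ with $\mathrm{Err}_R$ controllable by localization. A direct computation from the two equations gives (for $\phi_R=|x|^2$)
\[
\ddot V \;=\; 8K(u,v)\;+\;2(2\kappa-1)\,\frac{d}{dt}\,\im\!\int_{\R^5} \bar v\,(x\cdot\nabla v)\,dx,
\]
and the last term expands as $2(2\kappa-1)\bigl(2\kappa\|\nabla v\|_{L^2}^2+\text{cubic terms in }u,v\bigr)$. This is a \emph{bulk} contribution of the same order as $K$ itself; it is not supported in $\{|x|>R\}$, so neither radial Sobolev decay nor precompactness of the orbit can make it small for large $R$, and your coercivity $K\gtrsim L$ cannot absorb it for general $\kappa$. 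The paper avoids this entirely by working with the \emph{first-order} Morawetz action
\[
I(t)=2\,\im\!\int_{\R^5}\chi_R(x)\,x\cdot\Bigl(\bar u\nabla u+\tfrac12\,\bar v\nabla v\Bigr)\,dx,
\]
whose weight $\tfrac12$ on the $v$ component is precisely tuned so that $I'(t)=4K(u,v)+\text{(terms localized in }|x|>R\text{)}$, with no bulk mismatch. One then uses only $|I(t)|\lesssim R$ (Cauchy--Schwarz and the $H^1$ bound) and integrates in time.

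Two further points. First, the role of radiality in the paper is not to control a virial mismatch via $R^{-2}$ decay; rather, radiality forces the spatial center $x(t)\equiv 0$ and the momentum $P(u_0,v_0)=0$, which is exactly the hypothesis under which the first-order virial argument closes (for non-radial data the obstruction is the uncontrolled drift $X(t)$, and the extra term $-X'(t)\cdot P$ in $I'$). Second, your inclusion of a scale parameter $\lambda_n^j$ in the profile decomposition is unnecessary: the paper works in $H^1\times H^1$, which is not scale-invariant, so the decomposition carries only time and space parameters (the latter vanishing for radial sequences), and no argument to rule out $\lambda(t)\to 0,\infty$ is needed.
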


\begin{remark}
In the opposite case , i.e., $K(u_0,v_0)< 0$, the second author, Kishimoto, and the third author \cite{InKiNi18bp} show that the finite time blow-up occurs in both time directions. And so, the behavior of the radially symmetric solution to \eqref{NLS} below the ground state completely determined by the sign of the functional $K$ at initial time. 
\end{remark}

We apply the argument by Kenig--Merle \cite{KeMe06} to show Theorem \ref{thm1.1}. Their argument is as follows. We suppose that the theorem fails. Then, we can construct a non-scattering global solution with compact orbit, which is called critical element, below the ground state by the concentration compactness argument. After that, by using the virial identity, we can show such solution must be $0$. Such statement is called rigidity theorem. This shows a contradiction. 

In fact, Theorem \ref{thm1.1} is obtained as a corollary of the following propositions. We note that it is enough to consider the positive time directions in the following propositions by the time reversibility of \eqref{NLS}. 

\begin{proposition}[Existence of a critical element]
\label{prop1.2}
Let $\kappa \neq 1/2$. If the statement 
``If $E(u_0,v_0)M(u_0,v_0)<E(\phi,\psi)M(\phi,\psi)$ and $K(u_0,v_0)\geq 0$, then the solution scatters"
is not true, then there exists non-scattering global solution $(u^c,v^c)$ such that $E(u^c_0,v^c_0)M(u^c_0,v^c_0)<E(\phi,\psi)M(\phi,\psi)$, $K(u^c_0,v^c_0)> 0$, the orbit $\{(u^c(t, \cdot+x(t)), v^c(t, \cdot+x(t))): t \in [0,\infty)\}$ is precompact in $H^1(\R^5)\times H^1(\R^5)$ for some continuous function $x: [0,\infty) \to \R^5$. 
\end{proposition}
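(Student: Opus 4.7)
The plan is to execute the Kenig--Merle concentration--compactness scheme, adapted to the NLS system. First I introduce the critical mass-energy threshold
\[
L_c := \sup\bigl\{ L \geq 0 : \text{every radial } (u_0,v_0) \text{ with } E(u_0,v_0)M(u_0,v_0) < L \text{ and } K(u_0,v_0)\geq 0 \text{ scatters forward}\bigr\}.
\]
Small-data Strichartz theory gives $L_c>0$, and if the proposition fails then $L_c<E(\phi,\psi)M(\phi,\psi)$. I extract a sequence of radial data $(u_{0,n},v_{0,n})$ with $E(u_{0,n},v_{0,n})M(u_{0,n},v_{0,n})\searrow L_c$ and $K(u_{0,n},v_{0,n})\geq 0$ whose corresponding solutions have infinite Strichartz norm on $[0,\infty)$. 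Using the $\dot H^{1/2}$-critical scaling, which preserves the product $EM$, I normalize so that $M(u_{0,n},v_{0,n})$ and $E(u_{0,n},v_{0,n})$ converge separately to nonzero limits.

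Second, I apply a radial linear profile decomposition in $H^1(\R^5)\times H^1(\R^5)$ adapted to the pair of free flows $e^{it\Delta}$ and $e^{it\kappa\Delta}$:
\[
(u_{0,n},v_{0,n}) = \sum_{j=1}^J \bigl(U_n^j(0),V_n^j(0)\bigr) + (w_n^J,z_n^J),
\]
with scale parameters $\lambda_n^j>0$ and time shifts $t_n^j\in\R$ (no spatial translation parameters since the data are radial), asymptotically orthogonal pairs $(\lambda_n^j,t_n^j)$, a remainder with vanishing $\dot H^{1/2}$-critical Strichartz norm as $n,J\to\infty$, and Pythagorean decompositions of mass, kinetic energy, and, via radial Sobolev compactness applied to the cross term $\re\int\overline v u^2$, the full energy. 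The variational characterization of the ground state together with $E(u_{0,n},v_{0,n})M(u_{0,n},v_{0,n})<E(\phi,\psi)M(\phi,\psi)$ and $K(u_{0,n},v_{0,n})\geq 0$ provides uniform kinetic control, coercivity of $E$, and transmission of the condition $K\geq 0$ to each profile.

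Third, I attach a nonlinear profile $(\widetilde U^j,\widetilde V^j)$ to each linear profile (wave operators when $|t_n^j|\to\infty$, Cauchy problem when $t_n^j$ remains bounded). The Pythagorean inequality
\[
\sum_{j=1}^J E(U_n^j(0),V_n^j(0))M(U_n^j(0),V_n^j(0)) + E(w_n^J,z_n^J)M(w_n^J,z_n^J) \leq L_c + o_n(1)
\]
forces, if at least two profiles are nontrivial, each profile to satisfy $EM<L_c$, so by minimality of $L_c$ every nonlinear profile scatters with a uniform global Strichartz bound, and a long-time perturbation lemma for \eqref{NLS} then contradicts the nonscattering of $(u_n,v_n)$. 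Hence exactly one profile is nontrivial; $\lambda_n^1$ must stay in a compact subset of $(0,\infty)$ (coercivity of $E$ and the nonzero limit of $M$), and $t_n^1\to t_\infty^1\in\R$ since $|t_n^1|\to\infty$ would yield a scattering solution. The limit profile provides initial data $(u_0^c,v_0^c)$ whose solution $(u^c,v^c)$ is the desired critical element; the strict inequality $K(u_0^c,v_0^c)>0$ follows from the variational characterization, since $K(u_0^c,v_0^c)=0$ with $(u_0^c,v_0^c)\neq 0$ and $E(u_0^c,v_0^c)M(u_0^c,v_0^c)=L_c$ would force $(u_0^c,v_0^c)$ to be a ground state, contradicting $L_c<E(\phi,\psi)M(\phi,\psi)$. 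Finally, precompactness modulo a continuous $x(t)$ is obtained by re-running the one-profile argument on $(u^c(t_n),v^c(t_n))$ for arbitrary $t_n\to\infty$; in the radial class one may take $x(t)\equiv 0$.

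The main obstacle is the profile decomposition step. In the mass-resonance case \cite{Ham18p} one can essentially treat $(u,v)$ as a single Schr\"odinger field, but when $\kappa\neq 1/2$ the two free evolutions $e^{it\Delta}$ and $e^{it\kappa\Delta}$ must be decomposed jointly, and the asymptotic orthogonality of the cross interaction $\re\int\overline v u^2$ between distinct profiles has to be verified by hand. The radial assumption is decisive here: it removes the translation parameters whose presence, combined with the absence of Galilean invariance when $\kappa\neq 1/2$, would otherwise prevent the Pythagorean expansion of the energy from closing.
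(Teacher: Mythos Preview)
Your proposal restricts throughout to radial data --- your threshold $L_c$ is defined over radial initial data, your profile decomposition has no spatial translations, and you conclude with $x(t)\equiv 0$. But Proposition~\ref{prop1.2} as stated is explicitly \emph{non-radial}: the paper stresses that ``we do not assume radial symmetry in Propositions~\ref{prop1.2} and~\ref{prop1.3},'' and the whole architecture is that the radial hypothesis in Theorem~\ref{thm1.1} enters only at the rigidity step (to force $x(t)\equiv 0$ and $P=0$), not in the construction of the critical element. So you are proving a strictly weaker statement than what is asked. More specifically, your final claim that the radial assumption is ``decisive'' for decoupling the cross term $\re\int\bar v\, u^2$ is a misconception: the paper establishes this decoupling for general $H^1\times H^1$ sequences (see the argument for \eqref{decoupling of nonlinear term}), using the dispersive estimate when $t_n\to\infty$ and Rellich--Kondrachov localization when $t_n$ stays bounded, with spatial translations present and no radial input. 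The lack of Galilean invariance when $\kappa\neq\tfrac12$ causes trouble only in the rigidity argument, not in the concentration--compactness step.

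Beyond the scope issue, the paper's route differs technically in two ways. First, it never works with the product $EM$ at the proof level: it passes to the functional $I_\omega=\tfrac12 E+\tfrac\omega2 M$ for a fixed $\omega>0$, under which the threshold becomes $I_\omega<I_\omega(\phi_\omega,\psi_\omega)$ and the decoupling along profiles is genuinely additive once $K>0$ is established on each piece (Lemma~\ref{b}). Your Pythagorean-type inequality $\sum_j E_jM_j+E_{\mathrm{rem}}M_{\mathrm{rem}}\le L_c+o_n(1)$ is not an immediate consequence of the separate additivity of $E$ and $M$; it needs each $E_j\ge 0$ first, which is an extra variational step you do not mention. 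Second, the paper's profile decomposition (Proposition~\ref{profile}) carries only parameters $(t_n^j,x_n^j)$ and no scales; your $\lambda_n^j$ are superfluous in the $H^1$ framework, as you implicitly acknowledge when pinning $\lambda_n^1$ to a compact set, and the initial $\dot H^{1/2}$-scaling normalization you perform is absorbed in the paper's choice of $\omega$.
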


\begin{proposition}[Rigidity]
\label{prop1.3}
If $(u,v)$ is a non-scattering global solution such that $E(u_0,v_0)M(u_0,v_0)<E(\phi,\psi)M(\phi,\psi)$, $K(u_0,v_0)\geq 0$, the orbit $\{(u(t, \cdot+x(t)), v(t, \cdot+x(t))): t \in  [0,\infty)\}$ is precompact in $H^1(\R^5)\times H^1(\R^5)$ for some continuous function $x: [0,\infty) \to \R^5$, and $P(u_0,v_0) = 0$, then $(u,v) \equiv (0,0)$. 
\end{proposition}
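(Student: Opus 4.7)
The strategy is the classical Kenig--Merle rigidity scheme: assuming $(u,v) \not\equiv (0,0)$, combine a uniform lower bound on the Pohozaev functional $K$ along the orbit with a truncated virial (Morawetz) identity to force at least linear growth of a quantity that can grow at most like the radius of truncation. Since the truncation is arbitrary, this yields a contradiction.

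First I would establish uniform coercivity $K(u(t),v(t)) \geq \delta > 0$ along the trajectory: the variational characterization of the ground state via $K$, combined with the subthreshold assumption $E(u_0,v_0) M(u_0,v_0) < E(\phi,\psi) M(\phi,\psi)$ and $K(u_0,v_0)\geq 0$, traps the solution in the region $K > 0$, and precompactness prevents the infimum from being zero (otherwise a limiting profile would be trivial, which after compactness of the orbit would force $(u,v)\equiv 0$, contradicting non-scattering). Simultaneously, precompactness in $H^1\times H^1$ yields uniform tightness: for every $\eps>0$ there is $R_0$ such that for all $t\geq 0$ and $R\geq R_0$,
\begin{align*}
\int_{|x-x(t)|>R}\!\left(|\nabla u|^2+|u|^2+|\nabla v|^2+|v|^2+|u|^2|v|\right)dx<\eps.
\end{align*}

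Second, I would control the translation path by showing $|x(t)|=o(t)$ as $t\to\infty$. A direct computation gives the center-of-mass identity
\begin{align*}
\frac{d}{dt}\int x_j(|u|^2+|v|^2)\,dx=2P_j(u,v)+(2\kappa-1)\,\mathrm{Im}\!\int \bar v\,\partial_j v\,dx,
\end{align*}
so conservation of $P$ and the uniform bound on the second term give at most linear growth of the center of mass. A profile-extraction argument along any sequence $t_n\to\infty$ with $|x(t_n)|/t_n\to c$ rules out $c\neq 0$, since the limit would be a non-trivial precompact solution with non-zero momentum, contradicting $P(u_0,v_0)=0$.

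With these ingredients in hand, I introduce the truncated virial
\begin{align*}
Z_R(t):=2\,\mathrm{Im}\!\int \nabla\chi_R(x)\cdot\left(\bar u\nabla u+\tfrac12\,\bar v\nabla v\right)dx,
\end{align*}
where $\chi_R$ is a smooth radial cutoff equal to $|x|^2/2$ on $\{|x|\leq R\}$ with bounded Hessian and $\Delta^2\chi_R=O(R^{-2})$; the weight $1/2$ on $v$ matches the conserved momentum density. The standard Morawetz machinery applied to each equation, plus integration by parts on the coupled nonlinear boundary terms, yields $Z_R'(t)=8\,K(u(t),v(t))+\mathcal{E}_R(t)$, where $\mathcal{E}_R(t)$ collects a tail part supported in $\{|x|\geq R\}$ (controlled by $\eps$ once $\chi_R$ is shifted to center near $x(t)$, the shift being absorbed thanks to $|x(t)|=o(t)$) together with a residual term proportional to $(2\kappa-1)$ arising from the mismatch between the momentum weight $1/2$ (needed for linear cancellation) and the mass weight $1$ (needed for nonlinear cancellation). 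Using $P=0$ and $|x(t)|=o(t)$ one shows this residual is subordinate to $K$, so that $Z_R'(t)\geq 4\delta$ for $t$ and $R$ large enough, while $|Z_R(t)|\lesssim R$ uniformly by Cauchy--Schwarz and mass-energy conservation; integration over $[T_0,T]$ produces $R\gtrsim T$, a contradiction as $T\to\infty$.

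The main obstacle is precisely the virial identity in the non-mass-resonance regime. When $\kappa=1/2$, the weight $1/2$ on $v$ simultaneously reproduces the conserved momentum \emph{and} cancels the quadratic cross terms produced when the nonlinearities $v\bar u$ and $u^2$ are differentiated against $\nabla\chi_R$, so the virial identity closes cleanly with $Z_R'=8K+O(R^{-2})$. For $\kappa\neq 1/2$ this alignment breaks: a leftover term proportional to $(2\kappa-1)$ couples $\nabla\chi_R$ to a momentum-like density. Absorbing this residual using $P(u_0,v_0)=0$ and the sublinear growth $|x(t)|=o(t)$ (in lieu of the Galilean invariance exploited in the mass-resonant case \cite{Ham18p}) is the delicate step that distinguishes the present argument from its mass-resonant predecessor.
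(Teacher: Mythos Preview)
Your overall architecture is right, but the second step---controlling the spatial center $x(t)$---contains a real gap, and this is precisely the point where the non-mass-resonance case diverges from the classical argument.

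You claim $|x(t)|=o(t)$ via a profile-extraction argument: along a sequence with $x(t_n)/t_n\to c\neq 0$, ``the limit would be a non-trivial precompact solution with non-zero momentum, contradicting $P(u_0,v_0)=0$.'' This does not work. The momentum is translation invariant, so the $H^1\times H^1$ limit of $(u(t_n,\cdot+x(t_n)),v(t_n,\cdot+x(t_n)))$ has momentum exactly $P(u,v)=0$; no contradiction arises. In the Galilean-invariant setting ($\kappa=1/2$) one instead boosts the critical element to lower its action, but that move is unavailable here. In fact $|x(t)|=o(t)$ may well be false: your own center-of-mass identity shows that $\frac{d}{dt}\int x(|u|^2+|v|^2)\,dx=2\widetilde P(t)$ with $\widetilde P(t)=\im\!\int(\bar u\nabla u+\kappa\bar v\nabla v)\,dx$, and when $P=0$ one has $\widetilde P(t)=(\kappa-\tfrac12)\,\im\!\int \bar v\nabla v\,dx$, which is bounded but has no reason to vanish or to average to zero. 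So the center of mass---and hence $x(t)$---can drift linearly even under the hypothesis $P=0$.

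The paper does not attempt to prove $|x(t)|=o(t)$. Instead it sets $X(t)=\frac{2}{M}\int_0^t\widetilde P(s)\,ds$ (so $X'(t)$ is exactly the center-of-mass velocity), proves by a localized-mass argument that $|x(t)-X(t)|=o(t)$, and then runs the truncated virial \emph{centered at $X(t)$}:
\[
I(t)=2\,\im\!\int\chi_R(|x-X(t)|)\,(x-X(t))\cdot\bigl(\bar u\nabla u+\tfrac12\bar v\nabla v\bigr)\,dx.
\]
Differentiating produces, besides $4K$ and the usual tail errors, the drift term $-\,X'(t)\cdot P$ (up to a constant). This is where the assumption $P=0$ enters: it kills that term outright, with no need to know anything about the size of $X'(t)$ or of $x(t)$. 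The truncation radius is then chosen as $R=R_0+\sup_{[T_0,T_1]}|x(t)-X(t)|$, which is $o(T_1)$ by the lemma above, and the contradiction follows as in your final paragraph. So the ``residual proportional to $(2\kappa-1)$'' you allude to is handled not by estimating it, but by the algebraic identity $X'(t)\cdot P=0$; this is the key structural observation you are missing.
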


\begin{remark}
Propositions \ref{prop1.2} and \ref{prop1.3} are equivalent to respectively equivalent to Proposition \ref{EC_0} and Proposition \ref{rigidity} below by variational argument. See Section \ref{sec2.2} for the detail. 
\end{remark}

We emphasize that we do not assume radial symmetry in Propositions \ref{prop1.2} and \ref{prop1.3}. If the solution is radially symmetric, we can take $x(t)\equiv 0$ and $P=0$ in these propositions. Thus, Theorem \ref{thm1.1} immediately follows from Propositions \ref{prop1.2} and \ref{prop1.3}

\begin{remark}
The above rigidity theorem is incomplete in the sense that we exclude the case of $P \neq 0$. That is why the scattering for non radially symmetric solutions is an open problem. 
\end{remark}


\subsection{Difficulty of Proof} 

As stated before, we apply the concentration compactness and rigidity by Kenig--Merle \cite{KeMe06}. 
We have a difficulty to show rigidity. 
The difficulty is that \eqref{NLS} is not invariant by Galilean transformation when $\kappa \neq1/2$. 
By the concentration compactness argument, we get a critical element, whose orbit $\{(u(t, \cdot+x(t)), v(t, \cdot+x(t))): t \in [0,\infty)\}$ is precompact for some $x:[0,\infty) \to \R^5$. For the single equation or \eqref{NLS} with $\kappa=1/2$, we can control $x(t)$, which  roughly denotes the center of the critical element, since it is shown from Galilean invariance that the momentum of the critical element is zero. Concretely, we can show that $x(t)/t = o(1)$ as $t \to \infty$. This property is important when we derive a rigidity theorem. In the case of $\kappa \neq 1/2$, we could not control $x(t)$ as above. 
It holds that
\begin{align*}
	\frac{x(t) -X(t)}{t}\to 0 \text{ as }t \to \infty, \text{ where } X(t)=2\frac{\int_0^t\text{Im}\int_{\mathbb{R}^5}\overline{u}\nabla u+\kappa\overline{v}\nabla vdxds}{M(u,v)}.
\end{align*}
Note that $X(t)=2Pt/M$ when $\kappa =1/2$. In general, the behavior of $X(t)$ is not clear. 
In any case, the center of the critical element moves along $X(t)$. That is why we use a boosted virial identity instead of the usual virial identity. Namely, we use
\begin{align*}
	\frac{d}{dt}  \im \int (x-X(t)) \cdot \l( \bar{u}\nabla u + \frac{1}{2} \bar{v}\nabla v \r) dx
	=2K(u(t),v(t)) -X'(t) P(u,v).
\end{align*}
The sign of the right hand side is not clear since we do not know the behavior of $X(t)$. 
To avoid this difficulty, we erase the last term by assuming zero momentum. 

\section{Proof}

\subsection{Preparation of the proof}

Before starting the proof, we prepare some notations for a convenience and some basic results. 
\begin{notation} Let $U_{\kappa}(t):= (e^{it\Delta}, e^{i\kappa t\Delta})$ denote the free propagation. We denote the kinetic energy and the nonlinear energy by
\begin{align*}
	L(u,v)&:= \|\nabla u\|_2^2 + \frac{\kappa}{2} \|\nabla v\|_2^2, 
	\\
	N(u,v)&:=\re \int u^2 \bar{v} dx.
\end{align*}
Thus $E=L+N$ and $K=L+\frac{5}{4}N$ hold. For $\omega>0$, we set 
\begin{align*}
	I_{\omega}(u,v)&:=\frac{1}{2}E(u,v)+\frac{\omega}{2} M(u,v)
	\\
	J_{\omega}(u,v)&:= \frac{1}{10}L(u,v) + \frac{\omega}{2} M(u,v) = I_{\omega}(u,v) - \frac{2}{5}K(u,v)
\end{align*}
We define $\mathcal{N}(u,v):= (v\bar{u}, u^2)$. 
For any Banach space $X$, we set $ \|(u,v)\|_{X}:= \|u\|_X +\|v\|_X$. We set $S(u,v):=\norm{(u,v)}_{L_t^6([0,\infty):L_x^3)}$.
\end{notation}

In this section, we introduce some basic results. 

\begin{proposition}[standard Strichartz estimates]
\label{str0}
Let $(q,r)$ and $(\tilde{q},\tilde{r})$ satisfy $2\leq q,r,\tilde{q},\tilde{r} \leq \infty$ and $2/q+5/r=5/2=2/\tilde{q}+5/\tilde{r}$. Then it follows that
\begin{align*}
	&\norm{e^{it\Delta} \varphi}_{L^q(\R:L^r)} \cleq \norm{\varphi}_{L^2},
	\\
	&\left\|\int_{t_0}^t e^{i(t-s)\Delta}f(s)\, ds \right\|_{L_t^q (I , L^r_x)} \lesssim \|f\|_{L_t^{\tilde{q}'} (I, L_x^{\tilde{r}'})},
\end{align*}
where $I$ is an interval and $t_0\in I$.
\end{proposition}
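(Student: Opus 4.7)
The plan is to treat Proposition \ref{str0} as a direct consequence of the classical Strichartz theory for the free Schr\"odinger group on $\R^5$, since the statement concerns only the scalar propagator $e^{it\Delta}$ and makes no reference to the system structure or to $\kappa$. Accordingly, I would not attempt any novel argument; instead I would invoke the results of Strichartz, Ginibre--Velo, Yajima, and Keel--Tao, and simply verify that the admissibility condition $2/q+5/r=5/2$ with $2\le q,r\le\infty$ in the statement is exactly the Schr\"odinger-admissible range in dimension $d=5$.

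The key steps, in order, would be the following. First, I would record the pointwise kernel representation of $e^{it\Delta}$ in $\R^5$ and read off the dispersive bound
\begin{equation*}
\|e^{it\Delta}f\|_{L^\infty(\R^5)}\lesssim |t|^{-5/2}\|f\|_{L^1(\R^5)},
\end{equation*}
together with the $L^2$-isometry $\|e^{it\Delta}f\|_{L^2}=\|f\|_{L^2}$. Second, I would interpolate these two endpoints to obtain the family of decay estimates $\|e^{it\Delta}f\|_{L^r}\lesssim |t|^{-5(1/2-1/r)}\|f\|_{L^{r'}}$ for $2\le r\le\infty$. Third, I would feed this into the abstract $TT^*$ framework of Keel--Tao to produce the homogeneous estimate for every admissible $(q,r)$, and finally derive the inhomogeneous (retarded) estimate either by the bilinear form of the same framework or via the Christ--Kiselev lemma off the diagonal.

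The only genuinely delicate point is the sharp endpoint pair $(q,r)=(2,10/3)$, where the standard $TT^*$--Hardy-Littlewood-Sobolev argument fails and one must appeal to the bilinear interpolation/atomic decomposition of Keel--Tao; away from the endpoint everything is classical. Since this proposition is stated as background and is used only as a black box in the rest of the paper, I would present the result with a one-line citation to \cite{KeMe06}-style references rather than reproduce the proof, and spend no further effort on it.
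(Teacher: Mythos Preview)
Your proposal is correct and matches the paper's treatment: the paper does not prove Proposition \ref{str0} either, but simply refers the reader to Cazenave's monograph \cite{Caz03}. Your outline of the dispersive estimate, $TT^*$, and Keel--Tao endpoint is the standard route behind that citation, so there is nothing to add.
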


\begin{proposition}[Non admissible Strichartz]\label{str1}
Let $I$ be an interval and $t_0\in I$. Then it follows that
\begin{align*}
\left\|\int_{t_0}^t e^{i(t-s)\Delta}f(s)\, ds \right\|_{L_t^6 (I , L^3_x)} \lesssim \|f\|_{L_t^{3} (I, L_x^{\frac{3}{2}})}.
\end{align*}
\end{proposition}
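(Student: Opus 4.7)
The plan is to reduce the inhomogeneous estimate to a one-dimensional Hardy--Littlewood--Sobolev (fractional integration) inequality via the pointwise dispersive estimate. Since $(6,3)$ fails the admissibility condition $2/q+5/r=5/2$ in $d=5$ (we have $2/6+5/3=2$, not $5/2$), the standard Keel--Tao $TT^*$ machinery does not give this directly; instead we exploit that the scaling deficit corresponds exactly to a $\dot H^{1/2}$-type gain that can be supplied by a fractional integral.

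First I would recall the dispersive estimate in $\R^5$, namely
\begin{align*}
\norm{e^{it\Delta}\varphi}_{L^3_x}\cleq |t|^{-5(1/2-1/3)}\norm{\varphi}_{L^{3/2}_x}=|t|^{-5/6}\norm{\varphi}_{L^{3/2}_x}.
\end{align*}
Applying this pointwise in $t,s$ inside the Duhamel integral and using Minkowski's inequality to pull the $L^3_x$-norm past the $s$-integral gives
\begin{align*}
\l\|\int_{t_0}^t e^{i(t-s)\Delta}f(s)\,ds\r\|_{L^3_x}\cleq \int_I |t-s|^{-5/6}\norm{f(s)}_{L^{3/2}_x}\,ds.
\end{align*}

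The right-hand side is (up to a constant) the one-dimensional Riesz potential $I_{1/6}$ of order $1/6$ applied to the scalar function $g(s):=\norm{f(s)}_{L^{3/2}_x}\mathbbm{1}_I(s)$, since the convolution kernel is $|t-s|^{\alpha-1}$ with $\alpha-1=-5/6$, hence $\alpha=1/6$. Taking $L^6_t$-norms of both sides and applying the Hardy--Littlewood--Sobolev inequality on $\R$ with $1/6=1/3-1/6$ (so the input exponent is $3$ and the output exponent is $6$) yields
\begin{align*}
\l\|\int_{t_0}^t e^{i(t-s)\Delta}f(s)\,ds\r\|_{L^6_t(I,L^3_x)}\cleq \norm{I_{1/6}g}_{L^6_t(\R)}\cleq \norm{g}_{L^3_t(\R)}=\norm{f}_{L^3_t(I,L^{3/2}_x)},
\end{align*}
which is the desired bound.

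There is no serious obstacle here beyond a bookkeeping check that the dispersive decay $|t|^{-5/6}$ combines with the 1D HLS exponents $(3,6)$ precisely to match the scaling of $(6,3)$ and $(3,3/2)$ in $d=5$. If one wanted to be careful about the sharp version of HLS on the half-line $s\in[t_0,t]$, it is harmless to extend $g$ by zero outside $I$ and bound the truncated integral by the full convolution on $\R$, which is the only minor technical point.
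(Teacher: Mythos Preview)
Your argument is correct: the dispersive decay $|t|^{-5/6}$ for the $L^{3/2}\to L^3$ map in $\R^5$ combined with the one-dimensional Hardy--Littlewood--Sobolev inequality with exponents $(3,6)$ and order $\alpha=1/6$ gives exactly the claimed bound, and the extension-by-zero trick handles the truncation to $I$ without loss. The paper itself does not supply a proof of this proposition; it simply cites Cazenave's monograph, where the same dispersive-plus-HLS mechanism underlies such non-admissible estimates, so your approach is the standard one.
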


See \cite{Caz03} for Propositions \ref{str0} and \ref{str1}.

\begin{proposition}[Stability] Given any $A\geq 0$, there exist $\varepsilon (A)>0$ and $C(A)>0$ with the following property. If $(u,v) \colon [0,\infty) \times \R^5 \rightarrow \mathbb{C}^2$ is a solution to 
\eqref{NLS}, if $(\tilde{u}, \tilde{v}) \in C([0,\infty), H^1 \times H^1)$ and $(e_1 , e_2) \in C([0,\infty) , H^{-1} \times H^{-1})$ satisfy
\begin{align*}
\begin{cases}
i\partial_t \tilde{u} + \Delta \tilde{u} = \tilde{v}\bar{\tilde{u}} + e_1 ,\\
i\partial_t \tilde{v} + \kappa \Delta \tilde{v} = \tilde{u}^2 +e_2,
\end{cases}
\end{align*}
for a.e. $t>0$, and if 
\begin{align*}
&\|(\tilde{u},\tilde{v})\|_{L_t^6 ([0,\infty), L^3)}\leq A,\quad \|(e_1 ,e_2)\|_{L_t^{3}([0,\infty),L_{x}^{\frac{3}{2}})} \leq \varepsilon \leq \varepsilon (A),\\
&\|U_{\kappa}(t)((u-\tilde{u}, v-\tilde{v})(0))\|_{L_t^6 ([0,\infty), L^3)} \leq \varepsilon \leq \varepsilon (A),
\end{align*}
then $(u,v)\in L_t^6 ([0,\infty), L^3)$ and $\|(u,v)-(\tilde{u}, \tilde{v})\|_{L_t^6 ([0,\infty), L^3)}\leq C \varepsilon.$
\end{proposition}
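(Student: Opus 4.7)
The plan is to reduce the proposition to a local stability estimate on short time intervals and then iterate over a partition of $[0,\infty)$ adapted to the size of $(\tilde u, \tilde v)$ in the $L^6_t L^3_x$ norm.

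Setting $w := u - \tilde{u}$ and $z := v - \tilde{v}$, the differences satisfy
\begin{align*}
i\partial_t w + \Delta w &= \bar{u}\,z + \tilde{v}\,\bar{w} - e_1, \\
i\partial_t z + \kappa \Delta z &= (u+\tilde{u})\,w - e_2,
\end{align*}
with initial data $(w,z)(0) = (u-\tilde u, v-\tilde v)(0)$. I would fix a small universal constant $\eta > 0$, to be determined, and partition $[0,\infty) = \bigsqcup_{j=1}^N I_j$ into consecutive subintervals $I_j = [t_j, t_{j+1})$ on each of which $\|(\tilde u, \tilde v)\|_{L^6(I_j; L^3)} \leq \eta$; since $\|(\tilde u, \tilde v)\|_{L^6([0,\infty); L^3)} \leq A$, one can take $N \lesssim (A/\eta)^6$.

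On an individual interval $I_j$, the Duhamel representation combined with Proposition~\ref{str1} (applied componentwise to $U_\kappa$, the bound for $e^{i\kappa t \Delta}$ following by rescaling in time) and H\"older's inequality---using, e.g., $\|\bar u\, z\|_{L^3(I_j; L^{3/2})} \leq \|u\|_{L^6(I_j; L^3)}\|z\|_{L^6(I_j; L^3)}$ and its analogues for $\tilde v \bar w$ and $(u+\tilde u)w$---yields
\begin{align*}
W_j \leq C\,\Phi_j + C\varepsilon + C\bigl(\eta + W_j\bigr)W_j,
\end{align*}
where $W_j := \|(w,z)\|_{L^6(I_j; L^3)}$ and $\Phi_j := \|U_\kappa(\cdot - t_j)(w(t_j), z(t_j))\|_{L^6(I_j; L^3)}$. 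Choosing $\eta$ with $C\eta < 1/4$ and invoking a continuity argument under the bootstrap assumption $W_j \ll 1$, I would absorb the quadratic term to conclude $W_j \leq 2C(\Phi_j + \varepsilon)$.

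To iterate, I would express the linear evolution starting from $t_j$ via Duhamel from time $0$:
\begin{align*}
U_\kappa(\cdot - t_j)(w(t_j), z(t_j)) = U_\kappa(\cdot)(w_0, z_0) - i\int_0^{t_j} U_\kappa(\cdot - s)\bigl(\mathcal{N}(u,v) - \mathcal{N}(\tilde u, \tilde v) - (e_1, e_2)\bigr)(s)\,ds.
\end{align*}
Applying Proposition~\ref{str1} to this source after extension by zero past $t_j$ and using the hypothesis $\|U_\kappa(\cdot)(w_0,z_0)\|_{L^6([0,\infty); L^3)} \leq \varepsilon$ gives $\Phi_j \lesssim \varepsilon + \sum_{k<j}(\eta + W_k)W_k$. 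Substituting into the local bound and performing induction on $j$ yields $W_j \leq K^j \varepsilon$ for some constant $K = K(\eta, C) > 1$, and summing over $j \leq N$ produces the desired bound $\|(u,v) - (\tilde u, \tilde v)\|_{L^6_t L^3_x} \leq C(A)\varepsilon$. The main obstacle is the bookkeeping needed to preserve the bootstrap smallness $W_j \ll 1$ through all $N$ steps: $\varepsilon(A)$ must be taken so small that $K^N\varepsilon$ remains below the threshold, forcing $\varepsilon(A)$ to be essentially exponentially small in $N \sim (A/\eta)^6$. No new analytic input beyond the standard scalar subcritical NLS stability argument is required.
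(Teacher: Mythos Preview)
Your proof is correct and is precisely the standard stability argument the paper has in mind: the authors' own proof consists solely of the remark that it ``follows by a same argument in \cite{FaXiCa11}'' and omits all details. Your partition-and-iterate scheme, using Proposition~\ref{str1} together with the quadratic H\"older factorization $L^6_tL^3_x \cdot L^6_tL^3_x \hookrightarrow L^3_tL^{3/2}_x$, is exactly that argument carried out for the system.
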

\begin{proof}This follows by a same argument in  \cite{FaXiCa11}. We omit a detail.
\end{proof}

\begin{lemma}
Let $(u,v)$ be a global solution to \eqref{NLS}. If $\|(u,v)\|_{L_t^6 (\R, L_x^3)} <\infty$, then $(u,v)$ scatters to a free solution, i.e., there exist 
$\Psi^{\pm} \in H^1 \times H^1$ such that
\begin{align*}
\lim_{t \rightarrow \pm \infty} \|(u,v)(t) - U_{\kappa}(t)\Psi^{\pm}\|_{H^1}=0. 
\end{align*}
\end{lemma}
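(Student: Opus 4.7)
The plan is to construct the scattering states $\Psi^{\pm}$ via the inverse wave operator formula
\[
	\Psi^{\pm} := (u_0,v_0) - i \int_0^{\pm\infty} U_{\kappa}(-s)\mathcal{N}(u,v)(s)\,ds,
\]
and then show, via Strichartz duality, that the tail of this Duhamel integral is small in $H^1\times H^1$. This reduces the lemma to proving that
\[
	\Bigl\| \int_{t_1}^{t_2} U_{\kappa}(-s)\mathcal{N}(u,v)(s)\,ds \Bigr\|_{H^1\times H^1} \longrightarrow 0
	\quad\text{as }t_1,t_2\to\pm\infty.
\]

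The first step is to partition $\R$ into finitely many intervals $I_1,\dots,I_J$ on which $\|(u,v)\|_{L^6_tL^3_x(I_j)}\le\eta$ for a sufficiently small $\eta>0$; this is possible by the hypothesis $\|(u,v)\|_{L^6_tL^3_x(\R)}<\infty$. On each $I_j$, I would combine the standard Strichartz estimate (Proposition \ref{str0}) at the $H^1$ level with Hölder's inequality in space--time: writing $\nabla(v\bar u)=(\nabla v)\bar u+v\,\nabla\bar u$ and controlling the quadratic nonlinearity by pairing one factor in $L^6_tL^3_x$ with the other at a derivative-level admissible pair, together with the $L^\infty_tH^1_x$ bound supplied by energy and mass conservation at the interval endpoints. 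Choosing $\eta$ small enough so that the nonlinear contribution can be absorbed, one obtains a uniform bound on $\|\nabla(u,v)\|_{S(I_j)}$ in some suitable admissible Strichartz norm $S$, and summing over the finitely many $I_j$ gives $\|\nabla(u,v)\|_{S(\R)}<\infty$.

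With the $H^1$-level Strichartz norms finite globally, I would then estimate the tail in $H^1$. Using the dual Strichartz inequality (the dual form of Proposition \ref{str0}) with the endpoint $(\infty,2)$,
\[
	\Bigl\|\int_{t_1}^{t_2} U_{\kappa}(-s)\mathcal{N}(u,v)(s)\,ds\Bigr\|_{L^2\times L^2}
	\lesssim \|\mathcal{N}(u,v)\|_{L^{\tilde q'}_tL^{\tilde r'}_x([t_1,t_2])},
\]
together with the analogous estimate for $\nabla\mathcal{N}(u,v)$, and then applying Hölder to split off one factor in $L^6_tL^3_x$ and the other in the derivative Strichartz norm controlled in the previous step, the right-hand side becomes a product of the form $\|(u,v)\|_{L^6_tL^3_x([t_1,t_2])}\cdot \|\nabla(u,v)\|_{S([t_1,t_2])}$. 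Since both factors are in $L^p$ on $\R$ for some finite $p$, their norms on the tail $[t_1,t_2]$ tend to $0$ as $t_1,t_2\to\pm\infty$, showing that $U_{\kappa}(-t)(u,v)(t)$ is Cauchy in $H^1\times H^1$. The Cauchy limit gives $\Psi^{\pm}$, and by Duhamel
\[
	\|(u,v)(t)-U_{\kappa}(t)\Psi^{\pm}\|_{H^1\times H^1}
	=\Bigl\|\int_t^{\pm\infty} U_{\kappa}(-s)\mathcal{N}(u,v)(s)\,ds\Bigr\|_{H^1\times H^1}\to 0.
\]

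The main obstacle is purely an exponent-bookkeeping one: choosing a pair of admissible Strichartz exponents (together with the non-admissible pair from Proposition \ref{str1} where convenient) so that Hölder's inequality simultaneously uses up the smallness coming from $\|(u,v)\|_{L^6_tL^3_x(I_j)}\le\eta$ and closes the estimate at the $H^1$ level despite the fact that $(u,v)$ satisfies a quadratic, non-gauge-invariant system. As remarked after Proposition 2.3, the scheme is essentially the same as in \cite{FaXiCa11}, so no genuinely new difficulty arises; the argument is standard once the exponents are chosen.
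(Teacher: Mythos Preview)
Your proposal is correct and is precisely the standard Strichartz argument the paper has in mind; the paper itself omits the proof entirely, writing only ``This follows from the Strichartz estimate. We omit the detail. See e.g.\ \cite{Ham18p}.'' In particular, the exponent bookkeeping you worry about is exactly the computation appearing in the Remark after Lemma~\ref{SDS}, namely placing the nonlinearity in $L_t^{12/7}W_x^{1,3/2}$ via H\"older with one factor in $L_t^6L_x^3$ and the other in $L_t^{12/5}W_x^{1,3}$, so no new difficulty arises.
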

\begin{proof}
This follows from the Strichartz estimate. We omit the detail. See e.g. \cite{Ham18p}.
\end{proof}
\begin{lemma}[Small data scattering]\label{SDS}
Let $(u_0,v_0)$ satisfy $\|U_{\kappa} (t) (u_0,v_0)\|_{L_t^6 ([0,\infty) , L_x^3)}$ $\leq\varepsilon_0$, 
where $\varepsilon_0>0$ is a sufficiently small constant. 
Then the maximal-lifespan solution $(u,v)\colon [0,T^{+}) \rightarrow \mathbb{C}^2$ satisfies $\|(u,v)\|_{L_t^6 ([0,T^{+}),L_x^3 )} \lesssim \|U_{\kappa} (t) (u_0,v_0)\|_{L_t^6 ([0,\infty) , L_x^3)}$.

\begin{proof}
Set $\varepsilon :=\|U_{\kappa} (t) (u_0,v_0)\|_{L_t^6 ([0,\infty) , L_x^3)}$ and $f(t):= \|(u,v)\|_{L_t^6 ([0,t),L_x^3 )}$ for any $t\in [0,T^{+})$. Then, by Proposition \ref{str1} it follows that
\begin{align}\label{bootstrap1}
f(t) \leq C\varepsilon + Cf(t)^2.
\end{align}
Set $A:= \{t\in [0,T^{+}) \mid f(t) \leq 2C \varepsilon\}$. Then, by the continuity of $f$, we have $A$ is a closed subset of $[0,T^{+})$. Furthermore we take $\varepsilon_0 >0$ such that
\begin{align*}
\delta \in (0,\varepsilon_0) \Rightarrow C\delta + C(2C\delta)^2 < 2C\delta.
\end{align*}
Combining this with \eqref{bootstrap1}, we can easily show that $A$ is an open subset of $[0, T^{+})$, and so we have $A=[0,T^{+})$.
\end{proof}
\end{lemma}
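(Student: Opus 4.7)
The plan is to combine the Duhamel formulation with the non-admissible Strichartz estimate of Proposition \ref{str1} and close via a standard continuity/bootstrap argument on a scalar quantity controlling the $L^6_t L^3_x$-norm.

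First I would write, for every $t\in[0,T^+)$,
\[
(u,v)(t)=U_{\kappa}(t)(u_0,v_0)-i\int_0^t U_{\kappa}(t-s)\,\mathcal{N}(u,v)(s)\,ds,
\]
take the $L^6_t([0,t),L^3_x)$-norm, and apply Proposition \ref{str1} componentwise. For the nonlinearity $\mathcal{N}(u,v)=(v\bar u,u^2)$, H\"older in space gives $\|v\bar u\|_{L^{3/2}_x}+\|u^2\|_{L^{3/2}_x}\lesssim \|u\|_{L^3_x}^2+\|u\|_{L^3_x}\|v\|_{L^3_x}$, and then H\"older in time lands the nonlinearity in $L^3_t L^{3/2}_x$. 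Setting $f(t):=\|(u,v)\|_{L^6([0,t),L^3_x)}$ and $\varepsilon:=\|U_{\kappa}(\cdot)(u_0,v_0)\|_{L^6_t([0,\infty),L^3_x)}$, this produces the scalar inequality
\[
f(t)\leq C\varepsilon+Cf(t)^2.
\]

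I would then close by bootstrap. The quadratic $y\mapsto C\varepsilon+Cy^2-y$ has two positive roots once $4C^2\varepsilon<1$, and the smaller root is bounded by $2C\varepsilon$. Choose $\varepsilon_0>0$ small enough that this holds whenever $\varepsilon\leq\varepsilon_0$. Since the local theory yields continuity of $t\mapsto f(t)$ on $[0,T^+)$ with $f(0)=0$, the set $A:=\{t\in[0,T^+):f(t)\leq 2C\varepsilon\}$ is nonempty and closed, and the above scalar inequality shows that it is also open (no value of $f(t)$ in $A$ can cross above $2C\varepsilon$ without first entering the gap between the two roots, which is forbidden by the inequality). Hence $A=[0,T^+)$, giving $\|(u,v)\|_{L^6_t([0,T^+),L^3_x)}\lesssim\varepsilon$ as claimed.

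The main obstacle is purely bookkeeping rather than conceptual: one has to confirm that the H\"older exponents line up so that the nonlinearity sits in the dual pair $L^3_t L^{3/2}_x$ required by Proposition \ref{str1}, and that $f$ is genuinely continuous on the maximal interval (from the local well-posedness theory of \cite{HaOzTa13}). Once these are in place, the quadratic bootstrap is entirely standard.
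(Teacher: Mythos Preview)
Your proof is correct and follows essentially the same approach as the paper's own argument: both use the Duhamel formula together with Proposition~\ref{str1} to derive the scalar inequality $f(t)\le C\varepsilon+Cf(t)^2$, and then close by the same continuity/bootstrap argument on the set $A=\{t:f(t)\le 2C\varepsilon\}$. Your version is slightly more explicit about the H\"older exponents and the quadratic-root analysis, but there is no substantive difference.
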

\begin{remark}
By the Strichartz estimates, we get
\begin{align*}
\|(u,v)\|_{L_t^{\frac{12}{5}}([0,T^+ ), W_x^{1,3})} 
&\lesssim \|(u_0,v_0)\|_{H^1} +\|\int_{0}^t U_{\kappa}(t-s) \mathcal{N}(u,v)\, ds\|_{L_t^{\frac{12}{5}}([0,T^+ ), W_x^{1,3})}\\
&\lesssim \|(u_0,v_0)\|_{H^1} +\| \mathcal{N}(u,v)\|_{L_t^{\frac{12}{7}}([0,T^+ ), W_x^{1,\frac{3}{2}})}\\
&\leq \|(u_0,v_0)\|_{H^1} +\|(u,v)\|_{L_t^6 ([0,T^+), L_x^{3})}  \|(u,v)\|_{L_t^{\frac{12}{5}}([0,T^+ ), W_x^{1,3})}.
\end{align*}
Combining this with Lemma \ref{SDS}, there exists $\delta_{\text{sd}} \in (0,1)$ such that if $(u_0,v_0)\in H^1\times H^1$ and 
$\|(u_0 ,v_0)\|_{H^1} <\delta_{\text{sd}}$,
then the corresponding solution $(u,v)$ to \eqref{NLS} is global for positive time and 
\begin{align}
\|(u,v)\|_{L_{t,x}^{\frac{14}{5}}([0,\infty))} + \|(u,v)\|_{L_t^6 ([0,\infty) , L_x^3)} + \|(u,v)\|_{L_t^{\infty}([0,\infty) , H^1)} \lesssim \|(u_0,v_0)\|_{H^1}. \label{uni bound}
\end{align}
\end{remark}
The following lemma is used in the proof of Proposition \ref{PS}.
\begin{lemma}\label{uniform orthogonality}Let $(u_0 ,v_0) \in H^1 \times H^1$ satisfy $\|(u_0 ,v_0)\|_{H^1} <\delta_{\text{sd}}$ and let $(u,v)$ be the corresponding solution of $\eqref{NLS}$. If $\{t_n^1\}, \{t_n^2\}
\subset \mathbb{R}$ and $\{x_n^1\}, \{x_n^2\} \subset \R^5$ satisfy $|t_n^1 -t_n^2| +|x_n^1 -x_n^2| \rightarrow \infty$ as $n\rightarrow \infty$, then
\begin{align}
\sup_{t\in \R}|(u(t-t_n^1 , \cdot -x_n^1), u(t-t_n^2 , \cdot -x_n^2))_{H^1}| \rightarrow 0 \quad \text{as $n\rightarrow \infty$}, \label{orthogonal1}\\
\sup_{t\in \R}|(v(t-t_n^1 , \cdot -x_n^1), v(t-t_n^2 , \cdot -x_n^2))_{H^1}| \rightarrow 0 \quad \text{as $n\rightarrow \infty$},\label{orthogonal2}
\end{align}
where $(\cdot , \cdot)_{H^1}$ is the scalar product in $H^1.$
\end{lemma}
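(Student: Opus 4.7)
The plan is to argue by contradiction. If (\ref{orthogonal1}) fails, then after passing to a subsequence there exist $\varepsilon>0$ and times $t_n\in\mathbb{R}$ with $|(u(t_n-t_n^1,\cdot-x_n^1),\,u(t_n-t_n^2,\cdot-x_n^2))_{H^1}|\geq\varepsilon$. Translating in $x$, I rewrite this as $|(u(s_n^1),\,u(s_n^2,\cdot-y_n))_{H^1}|\geq\varepsilon$, where $s_n^j:=t_n-t_n^j$ and $y_n:=x_n^2-x_n^1$, and I observe that $|s_n^1-s_n^2|+|y_n|=|t_n^1-t_n^2|+|x_n^1-x_n^2|\to\infty$. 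Passing to a further subsequence, each $s_n^j$ converges in $[-\infty,+\infty]$ and $|y_n|$ converges in $[0,+\infty]$.

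The proof is driven by three inputs: (i) the smallness hypothesis $\|(u_0,v_0)\|_{H^1}<\delta_{\text{sd}}$ together with Lemma~\ref{SDS} and (\ref{uni bound}) yield scattering states $u^{\pm}\in H^1$ with $\|u(t)-e^{it\Delta}u^{\pm}\|_{H^1}\to 0$ as $t\to\pm\infty$; (ii) for every $f\in H^1$, $e^{it\Delta}f\rightharpoonup 0$ in $H^1$ as $|t|\to\infty$, via the dispersive estimate $\|e^{it\Delta}f\|_{L^{\infty}}\lesssim|t|^{-5/2}\|f\|_{L^1}$ on the Schwartz class together with an $H^1$-bounded density argument; (iii) $f(\cdot-y_n)\rightharpoonup 0$ in $H^1$ whenever $|y_n|\to\infty$, by the analogous density approach. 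Moreover $e^{it\Delta}$ acts unitarily on $H^1$, so the weak-limits in (ii)-(iii) are preserved under additional free propagation or translation.

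With these in hand, I split into cases on the limits of $s_n^1,s_n^2$. If $s_n^j\to s^{*}\in\mathbb{R}$, continuity $u\in C(\mathbb{R};H^1)$ lets me replace $u(s_n^j)$ by $u(s^{*})$ up to an $o(1)$ error in $H^1$; if $|s_n^j|\to\infty$, (i) lets me replace it by $e^{is_n^j\Delta}u^{\pm}$ up to $o(1)$. After this reduction, the inner product becomes (up to $o(1)$) the pairing of a fixed element of $H^1$ against $e^{i\sigma_n\Delta}$ (with $\sigma_n=s_n^2-s_n^1$) of another fixed element translated by $y_n$. The divergence $|s_n^1-s_n^2|+|y_n|\to\infty$ then forces at least one of the mechanisms (ii), (iii) to produce weak convergence to $0$, while the opposite factor stays bounded in $H^1$; hence the inner product is $o(1)$, contradicting $\varepsilon>0$. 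The argument for (\ref{orthogonal2}) is identical with $e^{it\kappa\Delta}$ in place of $e^{it\Delta}$. The only real obstacle is organizing the case analysis cleanly; all the ingredients are standard once the scattering of the small-data solution is in hand.
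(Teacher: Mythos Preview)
Your argument is correct and follows essentially the same approach as the paper: reduce to a subsequence, replace $u(s_n^j)$ by either $u(s^*)$ (via continuity) or $e^{is_n^j\Delta}u^{\pm}$ (via small-data scattering), and then conclude by the dispersive estimate or spatial-translation weak convergence combined with a density argument. The only point to be careful about is your line ``weak limits are preserved under additional free propagation or translation'': this is not a formal consequence of unitarity when the translation/propagation parameters vary with $n$, but it does follow here from the density-plus-dispersive argument you already outlined in (ii), since after approximating $g$ by a Schwartz function one has $\|e^{i\sigma_n\Delta}g\|_{L^\infty}\to 0$ uniformly in the spatial shift.
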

\begin{proof}
We only prove \eqref{orthogonal1}. \eqref{orthogonal2} can be proved by a same mannaer.
To prove this, we take a sequence $\{t_n \} \subset  \R$ arbitrarily. Setting $f_n:=|(u(t_n -t_n^1 , \cdot -x_n^1), u(t_n-t_n^2 , \cdot -x_n^2))_{H^1}|$, then our aim is to show that
\begin{align*}
f_n \rightarrow 0 \quad \text{as $n\rightarrow \infty$}.
\end{align*}
For this aim, we take any subsequence of $\{f_n\}$ and represent it by the same symbol. Without loss of generality, we may assume
\begin{align*}
f_{n} =|(u(t_n^1 , \cdot -x_n^1), u(t_n^2 , \cdot -x_n^2))_{H^1}|.
\end{align*}
If $|t_n^1| \nrightarrow \infty$ and $|t_n^2| \nrightarrow \infty$, then passing to a subsequence if necessary, we may assume $t_n^1 \rightarrow t_1$ and $t_n^2 \rightarrow t_2$ for 
some $t_1 ,t_2 \in \R$. Then we obtain 
\begin{align*}
f_n  
&\leq \|u(t_n^1)- u(t_1)\|_{H^1} \|u(t_n^2)\|_{H^1} 
\\
&\quad + \|u(t_1)\|_{H^1} \|u(t_n^2) -u(t_2)\|_{H^1} + |(u(t_1,\cdot-x_n^1), u(t_2 , \cdot -x_n^2))_{H^1}|.
\end{align*} 
By \eqref{uni bound} and $|x_n^1 -x_n^2| \rightarrow \infty$, we get $f_n \rightarrow 0$. If $|t_n^1| \rightarrow \infty$ and $|t_n^2| \nrightarrow \infty$, then passing to a subsequence if necessary, we may assume $t_n^1 \rightarrow +\infty$ or $t_n^1 \rightarrow -\infty$ and $t_n^2 \rightarrow t_2$ for some $t_2 \in \R$. We only consider the case $t_n^1 \rightarrow -\infty$. Since $\|(u_0 ,v_0)\|_{H^1} <\delta_{\text{sd}}$, there exists $(\phi ,\psi) \in H^1 \times H^1$ such that
\begin{align*}
\|(u,v)(-t) - U_{\kappa} (-t) (\phi, \psi)\|_{H^1} \rightarrow 0 \quad \text
{as $t\rightarrow \infty$}.
\end{align*}
Then we obtain that
\begin{align*}
f_{n} \leq |(e^{it_n^1\Delta} \phi (\cdot -x_n^1), u(t_2 , \cdot - x_n^2))_{H^1} |+o_{n}(1).
\end{align*}
The result follows from the density argument and the dispersive estimate. The case $|t_n^1|, |t_n^2| \rightarrow \infty$ can be treated by a same manner. We omit the detail.
\end{proof}
\begin{lemma}[Final state problem]\label{FS}
Let $\Psi \in H^1 \times H^1$. Then there exists a solution $(u,v)$ to \eqref{NLS} which is defined on a neighborhood of $-\infty$ and scatters to $U_{\kappa}(-t)\Psi$ as $t\rightarrow +\infty$.
\end{lemma}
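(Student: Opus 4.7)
The plan is to construct the wave operator by a standard fixed-point argument applied to the Duhamel formulation
\begin{align*}
\Phi(u,v)(t) := U_{\kappa}(t)\Psi - i\int_{-\infty}^{t} U_{\kappa}(t-s)\mathcal{N}(u,v)(s)\, ds,
\end{align*}
posed on a half-line $(-\infty, -T]$ with $T=T(\Psi)$ chosen large. A fixed point of $\Phi$ will automatically satisfy $\|(u,v)(t) - U_{\kappa}(t)\Psi\|_{H^1} \to 0$ as $t \to -\infty$.

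First I would set up the smallness of the free evolution in the critical norm. By the standard Strichartz estimate (Proposition \ref{str0}) applied at the $\dot H^{1/2}$-critical admissible pair $(6,3)$, we have $U_{\kappa}(\cdot)\Psi \in L_t^6(\R, L_x^3)$; by monotone convergence, for any $\eta>0$ we can choose $T = T(\Psi,\eta)$ so that
\begin{align*}
\|U_{\kappa}(\cdot)\Psi\|_{L_t^6((-\infty,-T], L_x^3)} \leq \eta.
\end{align*}

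Second, I would work on the complete metric space
\begin{align*}
\mathcal{B} := \bigl\{(u,v) : \|(u,v)\|_{L_t^6L_x^3} \leq 2\eta,\ \|(u,v)\|_{L_t^{\infty}H^1 \cap L_t^{12/5}W_x^{1,3}} \leq 2C\|\Psi\|_{H^1}\bigr\}
\end{align*}
(all norms taken over $(-\infty,-T]$), endowed with the $L_t^6L_x^3$-distance. Combining the non-admissible Strichartz estimate (Proposition \ref{str1}) with the Hölder bound
\begin{align*}
\|\mathcal{N}(u,v)\|_{L_t^3 L_x^{3/2}} \lesssim \|(u,v)\|_{L_t^6 L_x^3}^2
\end{align*}
one obtains $\|\Phi(u,v)\|_{L_t^6L_x^3} \leq \eta + C\eta^2 \leq 2\eta$ for $\eta$ small. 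For the $L_t^{\infty}H^1 \cap L_t^{12/5}W_x^{1,3}$ bound, the usual admissible Strichartz together with the fractional product rule and the same bilinear Hölder trick (as in the remark following Lemma \ref{SDS}) yields
\begin{align*}
\|\Phi(u,v)\|_{L_t^{\infty}H^1 \cap L_t^{12/5}W_x^{1,3}} \lesssim \|\Psi\|_{H^1} + \eta \cdot \|(u,v)\|_{L_t^{12/5}W_x^{1,3}},
\end{align*}
which absorbs for $\eta$ small. Contractivity in the $L_t^6 L_x^3$-distance follows from the identical estimates applied to the difference $\Phi(u,v) - \Phi(\tilde u,\tilde v)$, using $|\mathcal{N}(u,v) - \mathcal{N}(\tilde u,\tilde v)| \lesssim (|u|+|v|+|\tilde u|+|\tilde v|)(|u-\tilde u|+|v-\tilde v|)$.

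Third, the unique fixed point $(u,v) \in \mathcal{B}$ is a solution of \eqref{NLS} on $(-\infty,-T]$, and the identity
\begin{align*}
(u,v)(t) - U_{\kappa}(t)\Psi = -i\int_{-\infty}^{t} U_{\kappa}(t-s)\mathcal{N}(u,v)(s)\,ds
\end{align*}
combined with the Strichartz bound on the right-hand side and dominated convergence gives
\begin{align*}
\|(u,v)(t) - U_{\kappa}(t)\Psi\|_{H^1} \to 0 \quad \text{as } t\to -\infty.
\end{align*}
Since $\|(u,v)(-T)\|_{H^1} < \infty$, local well-posedness then extends $(u,v)$ to a maximal interval containing $(-\infty,-T]$. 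There is no genuine obstacle here: once the Strichartz framework (Propositions \ref{str0}--\ref{str1}) is in hand, the argument is the textbook construction of the wave operator, so the only bookkeeping point is to select the Strichartz pairs and verify that the bilinear nonlinearity $\mathcal{N}$ closes at the $\dot H^{1/2}$-critical scaling.
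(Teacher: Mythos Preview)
Your argument is correct and is precisely the standard wave-operator construction that the paper defers to \cite{Caz03} without supplying any details. One small correction: $(6,3)$ is not an $L^2$-admissible pair in $\R^5$ (it is $\dot H^{1/2}$-admissible), so the bound $U_\kappa(\cdot)\Psi \in L_t^6(\R,L_x^3)$ follows from Proposition~\ref{str0} at an admissible pair combined with Sobolev embedding rather than directly from Proposition~\ref{str0}, but the conclusion and the remainder of your argument are unaffected.
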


\begin{proof}
This follows from a standard argument. We omit the detail. See e.g. \cite{Caz03}. 
\end{proof}

\subsection{Variational argument}
\label{sec2.2}
We define
\begin{align*}
	\mu_{\omega} &:= \inf \{I_{\omega}( f, g ) :( f, g ) \in H^1(\R^5)\times H^1(\R^5) \setminus \{(0,0)\}, K( f, g )=0\}
\end{align*}
for $\omega>0$. Then, it holds that $\mu_{\omega}= I_{\omega}(\phi_{\omega},\psi_{\omega})$, where $(\phi_{\omega}(x),\psi_{\omega}(x))=\omega(\phi(\sqrt{\omega}x), \psi(\sqrt{\omega}x))$ and $( \phi, \psi )$ is a ground state of \eqref{SE}. By the scaling, $(\phi_{\omega}, \psi_{\omega})$ satisfies
\begin{equation*}
\begin{cases}
	-\omega \phi +\Delta \phi =\phi\psi ,& \text{ in  } \R^5,
	\\
	-2\omega \psi +\kappa \Delta \psi =\phi ^2, & \text{ in  } \R^5.
\end{cases}
\end{equation*}
$(e^{i\omega t}\phi_{\omega}, e^{i2\omega t}\psi_{\omega})$ is a solution of \eqref{NLS} for $\omega>0$. 
It is known that $E(u_0,v_0)M(u_0,v_0)<E(\phi,\psi)M(\phi,\psi)$ holds if and only if $I_{\omega}(u_0,v_0) < I_{\omega}(\phi_{\omega},\psi_{\omega})$ for some $\omega>0$. See e.g. \cite{Inu17}.  Thus, Propositions \ref{prop1.2} and \ref{prop1.3} are respectively equivalent to Proposition \ref{EC_0} and Proposition \ref{rigidity} below.

\begin{lemma}\label{a}
The following identity is true. 
\begin{align}
I_{\omega}(\phi_{\omega}, \psi_{\omega}) = \inf \{J_{\omega}(\phi , \psi) \mid (\phi , \psi)\in H^1 \times H^1 \setminus \{(0,0)\}  \text{ and } K(\phi , \psi)\leq 0\}.\label{eq1}
\end{align}
\end{lemma}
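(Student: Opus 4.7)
The plan is to prove the two inequalities in \eqref{eq1} separately, using the algebraic identity $J_\omega = I_\omega - \frac{2}{5}K$ (which is immediate from the definitions of $I_\omega$, $J_\omega$ and $K$) together with a one-parameter scaling argument.

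For the inequality ``$\geq$'' in \eqref{eq1}, I would simply use $(\phi_\omega,\psi_\omega)$ as a test function on the right-hand side. Since $(\phi_\omega,\psi_\omega)$ solves the rescaled elliptic system, it satisfies the Pohozaev-type identity $K(\phi_\omega,\psi_\omega)=0$, so it is admissible for the infimum. The algebraic identity then gives $J_\omega(\phi_\omega,\psi_\omega)=I_\omega(\phi_\omega,\psi_\omega)$, showing that the right-hand side of \eqref{eq1} is at most $I_\omega(\phi_\omega,\psi_\omega)$.

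For the reverse inequality, I would consider the dilation $(\phi,\psi)\mapsto(\lambda\phi,\lambda\psi)$ for $\lambda>0$. A direct computation gives
\[
L(\lambda\phi,\lambda\psi)=\lambda^2 L(\phi,\psi),\quad M(\lambda\phi,\lambda\psi)=\lambda^2 M(\phi,\psi),\quad N(\lambda\phi,\lambda\psi)=\lambda^3 N(\phi,\psi),
\]
so $J_\omega$ is homogeneous of degree $2$ under this dilation while $K(\lambda\phi,\lambda\psi)=\lambda^2 L(\phi,\psi)+\frac{5}{4}\lambda^3 N(\phi,\psi)$. Now take any admissible $(\phi,\psi)\neq(0,0)$ with $K(\phi,\psi)\leq 0$. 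Nontriviality forces $L(\phi,\psi)>0$, and $K\leq 0$ then forces $N(\phi,\psi)<0$. Setting $\lambda_0:=-\frac{4L(\phi,\psi)}{5N(\phi,\psi)}$, one checks that $\lambda_0\in(0,1]$ and $K(\lambda_0\phi,\lambda_0\psi)=0$. Admissibility of $(\lambda_0\phi,\lambda_0\psi)$ for the constrained infimum $\mu_\omega$ and the algebraic identity then yield
\[
I_\omega(\phi_\omega,\psi_\omega)=\mu_\omega\leq I_\omega(\lambda_0\phi,\lambda_0\psi)=J_\omega(\lambda_0\phi,\lambda_0\psi)=\lambda_0^2 J_\omega(\phi,\psi)\leq J_\omega(\phi,\psi),
\]
and passing to the infimum over admissible $(\phi,\psi)$ completes the argument.

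The only step that is not entirely routine is the calibration of the scaling: the mismatch between the homogeneity degrees of $L$, $M$ (both degree $2$) and $N$ (degree $3$) under the dilation, combined with the coefficient $5/4$ in the definition of $K$, is exactly what ensures that the nontrivial zero $\lambda_0$ of $\lambda\mapsto K(\lambda\phi,\lambda\psi)$ lies in $(0,1]$ precisely when $K(\phi,\psi)\leq 0$. This is the geometric content of the lemma; the rest is rearrangement of the defining formulas.
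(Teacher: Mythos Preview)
Your proof is correct and follows the same overall strategy as the paper: establish ``$\geq$'' by testing with $(\phi_\omega,\psi_\omega)$, and establish ``$\leq$'' by taking an admissible pair with $K\leq 0$, scaling along a one-parameter family until $K$ vanishes, and then invoking the definition of $\mu_\omega$ together with $J_\omega=I_\omega-\tfrac{2}{5}K$.

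The only difference is the choice of one-parameter family. The paper uses the $L^2$-invariant dilation $(u,v)\mapsto \lambda^{5/2}(u(\lambda\cdot),v(\lambda\cdot))$, under which $M$ is preserved, $L$ scales like $\lambda^2$, and $N$ like $\lambda^{5/2}$; the decrease of $J_\omega$ then comes only from the $L$-part. You use the amplitude scaling $(\phi,\psi)\mapsto(\lambda\phi,\lambda\psi)$, under which $J_\omega$ is exactly homogeneous of degree $2$, making the final inequality $\lambda_0^2 J_\omega\leq J_\omega$ immediate once $\lambda_0\in(0,1]$. Your choice is marginally cleaner here since it also handles the boundary case $K(\phi,\psi)=0$ (where $\lambda_0=1$) without a separate remark; the paper treats only the strict case $K<0$ and leaves $K=0$ to the definition of $\mu_\omega$.
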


\begin{proof}
Set $a:=\text{(RHS) of \eqref{eq1}}$. Then $I_{\omega}(\phi_{\omega}, \psi_{\omega}) \geq a$ is clear. To prove $I_{\omega}(\phi_{\omega}, \psi_{\omega}) \leq a$, we take any $(u,v)\in H^1 \times H^1 $ 
with $K(u,v)< 0$. Setting $f(\lambda):=K(\lambda^{\frac{5}{2}} (u(\lambda \cdot), v(\lambda \cdot) ))$, then we have $f(1)<0$ and 
\begin{align*}
f(\lambda) =\lambda^2 L(u,v) +\frac{5}{4}\lambda^{\frac{5}{2}} N(u,v).
\end{align*}
Since $f(\delta)>0$ for sufficiently small $\delta>0$, there exists $\lambda_0 \in (0,1)$ such that $f(\lambda_0)=0$. Then we get
\begin{align*}
I_{\omega}(\phi_{\omega}, \psi_{\omega}) 
&\leq I_{\omega}(\lambda_0^{\frac{5}{2}} (u(\lambda_0 \cdot), v(\lambda_0 \cdot) ))
\\
&=J_{\omega}(\lambda_0^{\frac{5}{2}} (u(\lambda_0 \cdot), v(\lambda_0 \cdot) ))
\\
&=\frac{\lambda_0^2}{10} L(u,v) + \frac{\omega}{2}M(u,v)
\\
&<J_{\omega}(u,v).
\end{align*}
This implies the result.
\end{proof}

\begin{lemma}[Compatibility of $K$ and $I_{\omega}$]\label{comp}
Let $(u,v)\in H^1 \times H^1$ stisfies $K(u,v)>0$. Then it follows that
\begin{align*}
I_{\omega}(u,v) > \frac{\omega}{2} M(u,v) + \frac{1}{10} L(u,v).
\end{align*}
\begin{proof}
From assumptions, we can easily calculate as follows ( we omit $(u,v)$ ):
\begin{align*}
I_{\omega}(u,v)=\frac{1}{2}E + \frac{\omega}{2} M=\frac{1}{2}(L + N) +\frac{\omega}{2}M>\frac{1}{2}(L -\frac{4}{5}L) +\frac{\omega}{2}M.
\end{align*}
\end{proof}
\end{lemma}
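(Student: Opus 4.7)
The plan is a direct algebraic manipulation using the relations among $E$, $L$, $N$, $K$, and $I_\omega$ recorded in the Notation paragraph. First I would rewrite $I_\omega(u,v)$ by plugging in $E = L + N$, which gives
\[
I_\omega(u,v) = \tfrac{1}{2}L(u,v) + \tfrac{1}{2}N(u,v) + \tfrac{\omega}{2}M(u,v).
\]
Next I would translate the hypothesis $K(u,v) > 0$ into a lower bound on $N(u,v)$: since $K = L + \tfrac{5}{4}N$, strict positivity yields $N(u,v) > -\tfrac{4}{5}L(u,v)$. Inserting this into the line above produces
\[
I_\omega(u,v) > \tfrac{1}{2}L(u,v) - \tfrac{2}{5}L(u,v) + \tfrac{\omega}{2}M(u,v) = \tfrac{1}{10}L(u,v) + \tfrac{\omega}{2}M(u,v),
\]
which is exactly the claimed inequality.

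There is essentially no obstacle here; the lemma is a bookkeeping fact expressing that when $K > 0$ the nonlinear contribution $N$ cannot cancel more than $4/5$ of the kinetic term $L$, so $I_\omega$ retains the positive multiple $\tfrac{1}{10}L$ on top of the full mass term. The strict inequality is inherited directly from the strict inequality $K(u,v) > 0$; had the hypothesis been $K(u,v) \ge 0$, the very same chain would give $\ge$ in the conclusion. The only point worth noting is that the lemma is vacuous at $(u,v) = (0,0)$, which is automatically excluded by $K > 0$.
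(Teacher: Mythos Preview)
Your proof is correct and follows exactly the same route as the paper: expand $I_\omega = \tfrac{1}{2}(L+N) + \tfrac{\omega}{2}M$ and use $K = L + \tfrac{5}{4}N > 0$ to bound $N > -\tfrac{4}{5}L$. The paper's one-line computation is just a more compressed version of what you wrote.
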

\begin{lemma}\label{est of K}
Let $(u_0 , v_0)\in H^1 \times H^1$ and $(u,v)$ be the maximal-lifespan solution with $(u,v)(0)=(u_0,v_0)$. Assume that
\begin{align*}
I_{\omega}(u_0, v_0) <I_{\omega}(\phi_{\omega}, \psi_{\omega})\quad \text{and} \quad K(u_0,v_0)>0.
\end{align*}
Then it follows that
\begin{align*}
K(u,v)(t) > 0 \quad \text{for any $t$ which is in the lifespan.}
\end{align*}
\end{lemma}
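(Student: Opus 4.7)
The plan is a standard bootstrap/continuity argument using the variational characterization of the ground state level.

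First, I would observe that $I_\omega$ is conserved along the flow. Indeed, $I_\omega(u,v) = \tfrac{1}{2}E(u,v) + \tfrac{\omega}{2}M(u,v)$, and both $E$ and $M$ are conservation laws of \eqref{NLS}. Hence for every $t$ in the maximal lifespan,
\begin{align*}
I_\omega(u(t),v(t)) = I_\omega(u_0,v_0) < I_\omega(\phi_\omega,\psi_\omega).
\end{align*}

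Next, I would use continuity of $t \mapsto K(u(t),v(t))$, which follows from $(u,v) \in C(I, H^1\times H^1)$ and the fact that $K$ is continuous on $H^1\times H^1$ (the cubic term is controlled by Sobolev embedding in $\R^5$). Suppose for contradiction that $K$ fails to remain positive; then by continuity there is a first time $t_* > 0$ (or $t_* < 0$) with $K(u(t_*),v(t_*)) = 0$. At this point I split into two cases.

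If $(u(t_*), v(t_*)) = (0,0)$, uniqueness of the Cauchy problem forces $(u_0,v_0) = (0,0)$, contradicting $K(u_0,v_0)>0$. Otherwise $(u(t_*),v(t_*))\ne (0,0)$ with $K(u(t_*),v(t_*))=0$, so by the very definition of the minimization problem defining $\mu_\omega$ we have
\begin{align*}
I_\omega(u(t_*),v(t_*)) \geq \mu_\omega = I_\omega(\phi_\omega,\psi_\omega),
\end{align*}
contradicting the displayed bound above. Therefore no such $t_*$ exists, and $K(u(t),v(t)) > 0$ throughout the lifespan.

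I do not foresee any genuine obstacle; the argument is elementary once one has the conservation of $I_\omega$, the characterization $\mu_\omega = I_\omega(\phi_\omega,\psi_\omega)$, and the continuity of the flow in $H^1\times H^1$. The only mild point to verify is that the sub-threshold set $\{K>0\}$ is genuinely separated from the zero-level set $\{K=0, (u,v)\ne 0\}$ at the variational level, which is exactly what the definition of $\mu_\omega$ provides. In particular, Lemma \ref{comp} (and the formula $I_\omega - \tfrac{2}{5}K = J_\omega$) is not needed here, though it would give as a byproduct the quantitative bound $I_\omega(u(t),v(t)) > \tfrac{\omega}{2}M(u(t),v(t)) + \tfrac{1}{10}L(u(t),v(t))$, which is useful later for the uniform-in-time boundedness of the $H^1$ norm below the ground state threshold.
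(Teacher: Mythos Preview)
Your argument is correct and follows essentially the same route as the paper: assume $K$ vanishes at some time, use the variational characterization of $\mu_\omega$ to contradict the conserved sub-threshold bound on $I_\omega$. Your version is in fact slightly more careful, since you explicitly dispose of the degenerate case $(u(t_*),v(t_*))=(0,0)$ (needed because the constraint set for $\mu_\omega$ excludes the origin), which the paper's proof leaves implicit.
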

\begin{proof}
If $K(u,v)(t_0)\leq 0$ for some $t_0$. Then there exists $t_1$ such that $K(u,v)(t_1)=0$, and so we have
\begin{align*}
I_{\omega}(u,v)(t_1) \geq I_{\omega}(\phi_{\omega}, \psi_{\omega}).
\end{align*}
This is a contradiction by conservation of mass and energy.
\end{proof}
\begin{remark}
Combining Lemma \ref{comp} and Lemma \ref{est of K}, then we have global existence result under the same assumption in Lemma \ref{est of K}.
\end{remark}

\begin{lemma}
\label{positivityK}
Let $(u_0 , v_0)\in H^1 \times H^1$  satisfy
\begin{align*}
I_{\omega}(u_0, v_0) <I_{\omega}(\phi_{\omega}, \psi_{\omega})\quad \text{and} \quad K(u_0,v_0)>0.
\end{align*}
Then it follows that
\begin{align*}
K(u,v)(t) > \min \l\{ \frac{1}{8} ( I_{\omega}(\phi_{\omega}, \psi_{\omega}) - I_{\omega}(u_0,v_0) ), \delta L(u,v)(t) \r\}
\end{align*}
for some constant $\delta>0$ for all time $t \in \R$.
\end{lemma}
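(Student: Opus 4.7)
The plan is to combine a rescaling argument with the variational characterization in Lemma \ref{a}. By conservation of mass and energy, $I_{\omega}(u,v)(t) = I_{\omega}(u_0,v_0)$, and by Lemma \ref{est of K}, $K(u,v)(t) > 0$ throughout the lifespan. Fix $t$ and abbreviate $L := L(u,v)(t)$, $N := N(u,v)(t)$, $K := K(u,v)(t)$. In the easy case $N \geq 0$ one has $K = L + \tfrac{5}{4}N \geq L$, so $K > \delta L$ for any $\delta \in (0,1)$; from now on I assume $N < 0$, in which case $K > 0$ forces $s := \tfrac{4L}{5|N|} > 1$.

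Next I introduce the $\dot H^{1/2}$-invariant scaling $(u_\lambda, v_\lambda) := \lambda^{5/2}(u(\lambda\cdot), v(\lambda\cdot))$, under which $M(u_\lambda,v_\lambda) = M(u,v)$, $L(u_\lambda,v_\lambda) = \lambda^2 L$, and $N(u_\lambda,v_\lambda) = \lambda^{5/2} N$. Choosing $\lambda^* := s^2$ makes $K(u_{\lambda^*},v_{\lambda^*}) = s^4 L + \tfrac{5}{4}s^5 N = 0$, so Lemma \ref{a} forces $I_\omega(u_{\lambda^*},v_{\lambda^*}) = J_\omega(u_{\lambda^*},v_{\lambda^*}) \geq I_\omega(\phi_\omega,\psi_\omega)$. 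Expanding with $N = -\tfrac{4L}{5s}$ produces
\begin{align*}
I_\omega(u_{\lambda^*},v_{\lambda^*}) - I_\omega(u,v)
= \frac{L(s^5 - 5s + 4)}{10 s}
= \frac{L(s-1)^2(s^3 + 2s^2 + 3s + 4)}{10s},
\end{align*}
along with the identity $K = L(s-1)/s$. Combining these yields the key inequality
\begin{align*}
I_\omega(\phi_\omega,\psi_\omega) - I_\omega(u_0,v_0)
\leq \frac{K(s-1)(s^3 + 2s^2 + 3s + 4)}{10}.
\end{align*}

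The conclusion then follows from a split on $s$. If $s \geq 3/2$, then $K = L(s-1)/s \geq L/3 > L/4$, so $K > \delta L$ with $\delta = 1/4$. If $1 < s < 3/2$, an elementary estimate gives $(s-1)(s^3 + 2s^2 + 3s + 4) < \tfrac{1}{2}\cdot 17 < 9$, so the key inequality forces $K > \tfrac{1}{9}\bigl(I_\omega(\phi_\omega,\psi_\omega) - I_\omega(u_0,v_0)\bigr) > \tfrac{1}{8}\bigl(I_\omega(\phi_\omega,\psi_\omega) - I_\omega(u_0,v_0)\bigr)$. Either branch beats the claimed minimum, and strictness is preserved because both the energy gap and $L$ are strictly positive. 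The main obstacle is obtaining the factorization $s^5 - 5s + 4 = (s-1)^2(s^3+2s^2+3s+4)$ from the expansion of $I_\omega(u_{\lambda^*}) - I_\omega(u)$; without it, the bound depends on $s$ in an opaque way and the two-case dichotomy looks ad hoc rather than inevitable.
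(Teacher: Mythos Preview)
Your argument is correct in substance and supplies exactly the details the paper omits (the paper just writes ``standard calculation'' and defers to \cite[Lemma~3.1]{Ham18p}). The scaling $(u_\lambda,v_\lambda)=\lambda^{5/2}(u(\lambda\cdot),v(\lambda\cdot))$, the identification of $\lambda^*=s^2$ with $s=4L/(5|N|)$, the appeal to Lemma~\ref{a} at $K=0$, and the factorization $s^5-5s+4=(s-1)^2(s^3+2s^2+3s+4)$ are all correct, as is the identity $K=L(s-1)/s$.

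There is one arithmetic slip in the final line of the case $1<s<3/2$: you write $K>\tfrac{1}{9}(\,\cdot\,)>\tfrac{1}{8}(\,\cdot\,)$, but $\tfrac{1}{9}<\tfrac{1}{8}$, so the second inequality is false as written. The fix is immediate from your own estimate: since $(s-1)(s^3+2s^2+3s+4)<\tfrac{17}{2}<9$, the key inequality gives
\[
I_\omega(\phi_\omega,\psi_\omega)-I_\omega(u_0,v_0)\;<\;\tfrac{9}{10}\,K,
\]
hence $K>\tfrac{10}{9}\bigl(I_\omega(\phi_\omega,\psi_\omega)-I_\omega(u_0,v_0)\bigr)>\tfrac{1}{8}\bigl(I_\omega(\phi_\omega,\psi_\omega)-I_\omega(u_0,v_0)\bigr)$. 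With this correction (and $\delta=1/4$ from the other branch), the proof is complete.
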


\begin{proof}
The standard calculation obeys this lemma. We omit the details. See \cite[Lemma 3.1]{Ham18p} for details. 
\end{proof}

\begin{corollary}
Let $\Psi$ and $(u,v)$ be as in Lemma \ref{FS}. Suppose also that $\Psi \neq (0,0)$ and 
\begin{align*}
\frac{\omega}{2} M(\Psi) + \frac{1}{2}L(\Psi) < I_{\omega} (\phi_{\omega} , \psi_{\omega}).
\end{align*}
Then $(u,v)$ exists globally and satisfies
\begin{align*}
I_{\omega} (u,v) < I_{\omega} (\phi_{\omega} ,\psi_{\omega}), \quad K(u,v)>0.
\end{align*}
\end{corollary}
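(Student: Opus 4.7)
The strategy is to extract the asymptotic values of the conserved quantities from the scattering data and then invoke Lemma \ref{est of K} to propagate positivity of $K$ throughout the maximal lifespan, yielding global existence via the remark that follows. Concretely, by the final-state property $\|(u,v)(t) - U_\kappa(t)\Psi\|_{H^1 \times H^1} \to 0$ as $t\to -\infty$, and since the free propagator $U_\kappa(t)=(e^{it\Delta},e^{i\kappa t\Delta})$ is unitary on $L^2$ and commutes with $\nabla$, we have $M(U_\kappa(t)\Psi) = M(\Psi)$ and $L(U_\kappa(t)\Psi) = L(\Psi)$ for every $t$. Writing $\Psi = (\phi,\psi)$, H\"older bounds the cubic interaction by
\begin{align*}
|N(U_\kappa(t)\Psi)| \lesssim \|e^{it\Delta}\phi\|_{L^3}^2 \|e^{i\kappa t\Delta}\psi\|_{L^3},
\end{align*}
and a standard density argument (the $|t|^{-5/6}$ dispersive decay in $L^3$ for Schwartz data, together with $H^1(\R^5)\hookrightarrow L^3$) gives $\|U_\kappa(t)\Psi\|_{L^3\times L^3}\to 0$. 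Together with the $H^1$ convergence to the free evolution, this yields $M(u,v)(t)\to M(\Psi)$, $L(u,v)(t)\to L(\Psi)$, and $N(u,v)(t)\to 0$ as $t\to -\infty$.

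Since mass and energy are conserved on the maximal lifespan, these limits promote to identities: $M(u,v)(t)\equiv M(\Psi)$ and $E(u,v)(t) \equiv L(\Psi)$, so
\begin{align*}
I_\omega(u,v)(t) = \tfrac{1}{2}L(\Psi) + \tfrac{\omega}{2}M(\Psi) < I_\omega(\phi_\omega,\psi_\omega)
\end{align*}
on the whole lifespan by hypothesis, which is the $I_\omega$-bound in the conclusion. In the same limit, $K(u,v)(t) = L(u,v)(t) + \tfrac{5}{4}N(u,v)(t) \to L(\Psi)>0$, where strict positivity of $L(\Psi)$ uses $\Psi \neq (0,0)$ together with the fact that no nonzero $H^1(\R^5)$ function has vanishing gradient. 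Pick $t_0$ with $K(u,v)(t_0)>0$ and apply Lemma \ref{est of K} to the restarted Cauchy problem at $t_0$; this propagates $K(u,v)(t)>0$ to the whole maximal lifespan, and the remark following Lemma \ref{est of K} then delivers global existence.

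The only substantive technical point is the dispersive decay $\|U_\kappa(t)\Psi\|_{L^3\times L^3}\to 0$, which requires the density-plus-approximation step because $\Psi$ only lies in $H^1$; everything else is a direct combination of continuity, conservation laws, and the sign-persistence lemma already at our disposal.
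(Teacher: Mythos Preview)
Your proof is correct and follows essentially the same route as the paper, which records only that the corollary is ``an immediate consequence of Lemma \ref{FS}, conservation of $I_{\omega}$, and the dispersive estimate.'' You have simply made explicit the steps the paper leaves implicit: the dispersive decay forcing $N(u,v)(t)\to 0$, the identification of $I_\omega(u,v)$ with $\tfrac{\omega}{2}M(\Psi)+\tfrac12 L(\Psi)$ via the conservation laws, and the appeal to Lemma \ref{est of K} together with the subsequent remark to propagate $K>0$ and obtain global existence.
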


\begin{proof}
This is an immediate consequence of Lemma \ref{FS}, conservation of $I_{\omega}$, and the dispersive estimate.
\end{proof}

\subsection{Concentration compactness}
At the beginning of this section, we recall the concentration compactness for the single equation. 

\begin{lemma}[Concentration compactness for single case \cite{FaXiCa11} ]\label{ccs}
Let $a>0$ and let $\{v_n\} \subset H^1$ satisfy
\begin{align*}
\limsup_{n\rightarrow \infty} \|v_n\|_{H^1} \leq a <\infty.
\end{align*}
If 
\begin{align*}
\|e^{it\Delta} v_n\|_{L_{t}^{\infty}((0,\infty) , L^3)} \rightarrow A \quad \text{as $n\rightarrow \infty$},
\end{align*}
then, passing to a subsequence if necessary, there exist $\{(t_n, x_n) \} \subset [0,\infty)\times \mathbb{R}^5$. $\psi \in H^1$, and $\{w_n\} \subset H^1$ such that
\begin{align*}
v_n = e^{-it_n \Delta }\psi (\cdot -x_n) +w_n,
\end{align*}
with 
\begin{align*}
&e^{it_n \Delta} v_n (\cdot +x_n) \rightharpoonup \psi \quad \text{in $H^1$ as $n\rightarrow \infty$},\\
&e^{it_n \Delta} w_n (\cdot + x_n) \rightharpoonup 0 \quad \text{in $H^1$ as $n\rightarrow \infty$},\\
&\|v_n\|_{\dot{H}^{\lambda}}^2 -\|\psi\|_{\dot{H}^{\lambda}}^2 -\|w_n\|_{\dot{H}^{\lambda}}^2 \rightarrow 0 \quad \text{as $n\rightarrow \infty$},
\end{align*}
for all $\lambda \in [0,1]$. Moreover, 
\begin{align*}
\|\psi\|_{H^1} \gtrsim A^{p} a^{-q}, 
\end{align*}
where $p$ and $q$ are positive constant. 
\end{lemma}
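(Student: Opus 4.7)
The plan is to extract a single concentration profile by isolating the time $t_n$ and spatial location $x_n$ at which $e^{it\Delta}v_n$ carries macroscopic $L^3$ mass, then taking a weak $H^1$ limit. First, by the assumption on the $L^\infty_t L^3_x$ norm, I can choose for each sufficiently large $n$ a time $t_n \geq 0$ satisfying $\|e^{it_n\Delta} v_n\|_{L^3_x} \geq A/2$.

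The key analytic step is a refined Sobolev-type inequality. Since $\dot H^1(\R^5) \hookrightarrow L^{10/3}(\R^5)$ and $3 \in (2,10/3)$, a Littlewood--Paley decomposition combined with interpolation between $\dot H^1$ and the negative Besov norm $\dot B^{-s}_{\infty,\infty}$ yields an inequality of the form
\begin{align*}
\|f\|_{L^3} \lesssim \|f\|_{H^1}^{\theta}\,\|f\|_{\dot{B}^{-s}_{\infty,\infty}}^{1-\theta},
\end{align*}
with $\theta \in (0,1)$ and $s>0$ dictated by scaling. Applied to $f = e^{it_n\Delta}v_n$, combined with $\|f\|_{H^1} \leq a + o(1)$ and the $L^3$ lower bound above, this produces a dyadic frequency $N_n$ and a point $x_n \in \R^5$ for which $N_n^{-s}|P_{N_n}e^{it_n\Delta}v_n(x_n)| \gtrsim (A/a^\theta)^{1/(1-\theta)}$. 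The $H^1$ bound confines $N_n$ to a compact subset of $(0,\infty)$, so after extraction one may take $N_n \to N_\infty \in (0,\infty)$ and absorb $N_\infty$ into universal constants, thereby fixing the frequency scale.

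Next, I would set $\tilde v_n := e^{it_n\Delta}v_n(\cdot + x_n)$ and pass to a subsequence so that $\tilde v_n \rightharpoonup \psi$ weakly in $H^1$. Defining $w_n := v_n - e^{-it_n\Delta}\psi(\cdot - x_n)$ gives $e^{it_n\Delta}w_n(\cdot+x_n) = \tilde v_n - \psi \rightharpoonup 0$ in $H^1$ by construction. The orthogonality
\begin{align*}
\|v_n\|_{\dot H^\lambda}^2 - \|\psi\|_{\dot H^\lambda}^2 - \|w_n\|_{\dot H^\lambda}^2 \to 0 \qquad (\lambda \in [0,1])
\end{align*}
is then standard: since $e^{it\Delta}$ and spatial translations are isometries on each $\dot H^\lambda$, the claim reduces to $(\psi, \tilde v_n - \psi)_{\dot H^\lambda} \to 0$, which is precisely the weak convergence.

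For the quantitative lower bound, the Littlewood--Paley projector $P_1$ composed with restriction to a bounded ball is compact from $H^1$ into $L^\infty$, so the weak convergence $\tilde v_n \rightharpoonup \psi$ transfers the pointwise lower bound $|P_1\tilde v_n(0)| \gtrsim (A/a^\theta)^{1/(1-\theta)}$ to its limit $|P_1\psi(0)|$. Together with the trivial estimate $\|P_1\psi\|_\infty \lesssim \|\psi\|_{L^2} \leq \|\psi\|_{H^1}$ (from compact frequency support and Cauchy--Schwarz), this yields $\|\psi\|_{H^1} \gtrsim A^{p}a^{-q}$ with $p,q>0$ as required. The main obstacle is the refined Sobolev step: one must capture the $L^3$ mass at a \emph{single} Littlewood--Paley block and a \emph{single} spatial point rather than in a diffuse Besov norm, and then transfer the pointwise lower bound cleanly through the weak limit. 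Everything else is routine Hilbert-space weak compactness.
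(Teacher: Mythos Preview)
The paper does not give its own proof of this lemma; it simply cites the result from Fang--Xie--Cazenave \cite{FaXiCa11}. Your sketch is a correct outline of the standard proof and is essentially the argument found in that reference (and in earlier work of Keraani on which it builds): select $t_n$ near the $L^\infty_t L^3_x$ supremum, invoke a refined Sobolev inequality of the form $\|f\|_{L^3} \lesssim \|f\|_{H^1}^{\theta}\|f\|_{\dot B^{-s}_{\infty,\infty}}^{1-\theta}$ to produce a single dyadic block and spatial point carrying a definite amount of mass, use the two-sided $H^1$ control to freeze the frequency scale, and then extract $\psi$ as a weak $H^1$ limit with the Pythagorean decoupling following from Hilbert-space orthogonality. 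The one point worth tightening is the case $A=0$, where the conclusion is trivial with $\psi=0$; your argument implicitly assumes $A>0$ when you write $\|e^{it_n\Delta}v_n\|_{L^3}\geq A/2$.
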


Applying Lemma \ref{ccs}, we derive the following concentration compactness for the linear system.

\begin{lemma}[Concentration compactness for the system]
Let $a>0$ and let $\{(u_n,v_n)\}  \subset H^1\times H^1$ satisfy
\begin{align*}
\limsup_{n\rightarrow \infty} \|(u_n,v_n)\|_{H^1} \leq a <\infty.
\end{align*}
If 
\begin{align*}
\|U_{\kappa}(t)(u_n ,v_n)\|_{L_{t}^{\infty}((0,\infty) , L^3)} \rightarrow A \quad \text{as $n\rightarrow \infty$},
\end{align*}
then, passing to a subsequence if necessary, there exist $\{(t_n, x_n) \} \subset [0,\infty)\times \mathbb{R}^5$, $\Psi := (\phi, \psi)\in H^1 \times H^1$, and $\{W_n:= (\zeta_n , w_n)\} \subset H^1 \times H^1$ such that
\begin{align*}
(u_n ,v_n) = U_{\kappa} (-t_n) \Psi (\cdot -x_n) + W_n,
\end{align*}
with 
\begin{align}
\notag
&U_{\kappa}(t_n) (u_n ,v_n)(\cdot +x_n) \rightharpoonup \Psi \quad \text{in $H^1\times H^1$ as $n\rightarrow \infty$},\\
\notag
&U_{\kappa}(t_n) W_n (\cdot + x_n) \rightharpoonup 0 \quad \text{in $H^1 \times H^1$ as $n\rightarrow \infty$},\\
\notag
&\|u_n\|_{\dot{H}^{\lambda}}^2 -\|\phi\|_{\dot{H}^{\lambda}}^2 -\|\zeta_n\|_{\dot{H}^{\lambda}}^2 \rightarrow 0 \quad \text{as $n\rightarrow \infty$},\\
\notag
&\|v_n\|_{\dot{H}^{\lambda}}^2 -\|\psi\|_{\dot{H}^{\lambda}}^2 -\|w_n\|_{\dot{H}^{\lambda}}^2 \rightarrow 0 \quad \text{as $n\rightarrow \infty$},\\
&N(u_n ,v_n) - N(U_{\kappa}(-t_n)\Psi) -N(W_n) \rightarrow 0 \quad \text{as $n\rightarrow \infty$} \label{decoupling of nonlinear term}
\end{align}
for all $\lambda \in [0,1]$. Moreover, 
\begin{align*}
\|\Psi \|_{H^1} \gtrsim A^{p} a^{-q}, 
\end{align*}
where $p$ and $q$ are positive constant. 
\end{lemma}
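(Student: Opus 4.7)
The plan is to reduce the system case to the single-equation Lemma \ref{ccs}. Since $U_\kappa(t)$ acts componentwise,
\[
\|U_\kappa(t)(u_n,v_n)\|_{L_t^\infty L_x^3} \leq \|e^{it\Delta} u_n\|_{L_t^\infty L_x^3} + \|e^{i\kappa t\Delta} v_n\|_{L_t^\infty L_x^3},
\]
so after extracting a subsequence at least one of the right-hand terms is eventually $\geq A/2$. Without loss of generality assume it is the first; the other case reduces to it via the rescaling $t\mapsto \kappa^{-1}t$, which turns $e^{i\kappa t\Delta}$ into the standard Schr\"odinger propagator.

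Applying Lemma \ref{ccs} to $\{u_n\}$ produces $(t_n,x_n)\in [0,\infty)\times \R^5$, $\phi\in H^1$ with $\|\phi\|_{H^1}\gtrsim A^p a^{-q}$, and $\zeta_n$ such that $e^{it_n\Delta}u_n(\cdot+x_n)\rightharpoonup \phi$ in $H^1$ and the $\dot{H}^\lambda$ Pythagoras identity holds. For the second slot, $\{e^{i\kappa t_n\Delta}v_n(\cdot+x_n)\}$ is bounded in $H^1$; pass to a further subsequence, call its weak limit $\psi$, and set $w_n := v_n - e^{-i\kappa t_n\Delta}\psi(\cdot-x_n)$. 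Since $e^{i\kappa t_n\Delta}$ and spatial translation are $\dot{H}^\lambda$-isometries and $e^{i\kappa t_n\Delta}w_n(\cdot+x_n)\rightharpoonup 0$ in $H^1$, the Hilbert-space identity $\|f_n\|^2 - \|f\|^2 - \|f_n-f\|^2 \to 0$ whenever $f_n\rightharpoonup f$ yields the $\dot{H}^\lambda$ decoupling for $v_n$. Setting $\Psi := (\phi,\psi)$ gives the linear part of the decomposition together with the lower bound on $\|\Psi\|_{H^1}$.

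The genuinely new content beyond Lemma \ref{ccs} is the nonlinear decoupling \eqref{decoupling of nonlinear term}. Write $u_\infty^n := e^{-it_n\Delta}\phi(\cdot-x_n)$ and $v_\infty^n := e^{-i\kappa t_n\Delta}\psi(\cdot-x_n)$, so that expanding $u_n^2\bar v_n = (u_\infty^n+\zeta_n)^2(\bar v_\infty^n+\bar w_n)$ produces $(u_\infty^n)^2\bar v_\infty^n + \zeta_n^2\bar w_n$ together with four cross terms
\[
(u_\infty^n)^2\bar w_n,\qquad \zeta_n^2\bar v_\infty^n,\qquad 2u_\infty^n\zeta_n\bar v_\infty^n,\qquad 2u_\infty^n\zeta_n\bar w_n,
\]
each of whose integrals must be shown to vanish. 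After the substitution $y=x-x_n$ the profile factors become $e^{-it_n\Delta}\phi$ and $e^{-i\kappa t_n\Delta}\psi$, independent of $x_n$, while the remainder factors $\zeta_n(\cdot+x_n)$ and $w_n(\cdot+x_n)$ tend weakly to zero in $H^1$ and hence weakly in $L^3$ via $H^1(\R^5)\hookrightarrow L^3$. One then splits on the behaviour of $\{t_n\}$: when $t_n\to t_\ast\in\R$ along a subsequence, strong continuity of the propagators gives strong $L^3$ convergence of the profile factors and the cross terms vanish by weak-strong duality; when $|t_n|\to\infty$, one approximates $\phi,\psi$ in $H^1\cap L^{3/2}$ and applies the dispersive estimate $\|e^{it\Delta}f\|_{L^3}\lesssim |t|^{-5/6}\|f\|_{L^{3/2}}$ to make the profile factors small in $L^3$, so the cross terms vanish by H\"older against the uniformly $L^3$-bounded remainders.

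I expect the hardest cross term to be $\int u_\infty^n\zeta_n\bar v_\infty^n\,dx$, which mixes two profiles living under different propagators (speeds $1$ and $\kappa$) while the remainder $\zeta_n$ has no compactness beyond weak convergence to zero. One handles it by pairing $u_\infty^n \bar v_\infty^n \in L^{3/2}$ (strongly convergent or dispersively small after the above reduction) with $\zeta_n(\cdot+x_n) \in L^3$ (weakly zero after translation); the remaining three cross terms are strictly easier variants of this argument.
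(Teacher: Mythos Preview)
Your approach is essentially the paper's: reduce the linear decomposition to the single-equation Lemma~\ref{ccs} componentwise, then for the nonlinear decoupling expand into cross terms and split on whether $\{t_n\}$ stays bounded or escapes to infinity. You are in fact more explicit than the paper about the reduction step (the paper just says ``we easily get the result''), and your treatment of the case $t_n\to\infty$ via the dispersive estimate matches the paper's Case~1 exactly.

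There is, however, a gap in your finite-time case. Your weak--strong duality argument works for the two cross terms carrying two profile factors and one remainder, namely $(u_\infty^n)^2\bar w_n$ and $u_\infty^n\zeta_n\bar v_\infty^n$: there the product of profiles converges strongly in $L^{3/2}$ and pairs against a single remainder that is weakly null in $L^3$. But it does \emph{not} work for $\zeta_n^2\bar v_\infty^n$ and $u_\infty^n\zeta_n\bar w_n$, which carry two remainder factors. After translation these reduce to a single strongly-$L^3$ profile paired with a product of two sequences each converging only weakly in $L^3$, and a product of weakly null sequences need not be weakly null in $L^{3/2}$. So your assertion that these terms are ``strictly easier variants'' of the two-profile term is backwards. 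The paper closes this (its Case~2) with a localization argument: since $t_n\to t_0$, the family $\{U_\kappa(-t_n)\Psi\}$ is precompact in $H^1$, hence uniformly small in $L^3(|x|>R)$; on the ball $|x|<R$, Rellich--Kondrachov upgrades the weak $H^1$ convergence of $\zeta_n(\cdot+x_n)$ (resp.\ $w_n(\cdot+x_n)$) to strong $L^3_{\mathrm{loc}}$ convergence, which then kills all four cross terms at once. This local-compactness step is the missing ingredient in your sketch.
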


\begin{proof}
Applying Lemma \ref{ccs}, we easily get the result except for \eqref{decoupling of nonlinear term}. To prove \eqref{decoupling of nonlinear term}, we set 
\begin{align*}
f_n :=| N(u_n ,v_n) - N(U_{\kappa}(-t_n)\Psi) -N(W_n)|.
\end{align*}
Then by the decomposition, we obtain that
\begin{align*}
f_n
&\lesssim  \int (|e^{-it_n \Delta} \phi (\cdot -x_n)| + |\zeta_n |) |e^{-i\kappa t_n \Delta} \psi (\cdot -x_n)||\zeta_n|
\\& \qquad +(|e^{-it_n \Delta} \phi (\cdot -x_n)| + |\zeta_n |) | e^{-it_n \Delta}\phi (\cdot -x_n)| | w_n| dx
\\
&=:f_n^1 + f_n^2.
\end{align*}
Since $f_n^2$ can be treated in the same manner, we only consider $f_n^1$.
Setting $h_n := |e^{-i t_n \Delta} \phi(\cdot -x_n)| + |\zeta_n|$, we obtain
\begin{align*}
\|h_n\|_{L^3} \lesssim \|\phi\|_{H^1} + \|\zeta _n\|_{H^1} \lesssim 1.
\end{align*}
To finish the proof, we take any subsequence $\{f_{n_k}^1\} \subset \{f_n^1\}$. Passing to a subsequence if necessary we may assume $t_{n_k} \rightarrow t_0 \in [0,\infty]$.\\
Case 1. $t_0 =+\infty$. In this case we have by the dispersive estimate 
\begin{align*}
f_{n_k}^1 \lesssim \|e^{-i\kappa t_{n_k} \Delta} \psi (\cdot -x_{n_k})\|_{L^3} \|\zeta_{n_k}\|_{L^3} \|h_{n_k}\|_{L^3}\lesssim \|e^{-
i\kappa t_{n_k} \Delta} \psi \|_{L^3} \rightarrow 0\quad \text{as $k\rightarrow 0$}.
\end{align*}
Case 2. $t_0 \in [0,\infty )$. In this case, we can easily get
\begin{align}\label{weakly to 0}
W_{n_k} (\cdot + x_{n_k}) \rightharpoonup 0 \quad \text{in $H^1\times H^1$}.
\end{align}
Now $\{U_{\kappa} (-t_{n_k})\Psi \}$ is precompact in $H^1 \times H^1$ and so for any $\delta >0$ there exists $R>0$ such that
\begin{align*}
\sup_{k} \|U_{\kappa} (-t_{n_k})\Psi\|_{H^1 (|x|>R)} \leq \delta .
\end{align*}
Therefore we get
\begin{align*}
f_{n_k}^1 
&\lesssim \|e^{-i\kappa t_{n_k} \Delta} \psi\|_{L^3 (|x|>R)} + \|\zeta_{n_k} (\cdot +x_{n_k})\|_{L^3 (|x|<R)}\\
&\leq \delta +\|\zeta _{n_k} (\cdot +x_{n_k})\|_{L^3 (|x|<R)} \rightarrow \delta \quad \text{as $k\rightarrow \infty$},
\end{align*}
where the convergence of second term is followed by \eqref{weakly to 0} and Rellich--Kondrachov theorem. This implies $\limsup_{k\rightarrow \infty} f_{n_k}^1 \lesssim \delta$. Since $\delta >0$ is arbitrarily, we obtain the result.
\end{proof}

\begin{proposition}[Profile decomposition for the system]\label{profile}
Let $\{(u_n ,v_n)\}$ be a bounded sequence in $H^1 \times H^1$. Then, passing to a subsequence if necessary, there exist $J^{\ast} \in \mathbb{Z}_{\geq 0} \cup \{\infty\}$,
 profile $\{\Psi_j := (\phi_j, \psi_j)\}_{j=1}^{J^{\ast}}$, $\{(t_n^j, x_n^j)\} \subset [0,\infty) \times \mathbb{R}^5$, remainder $\{W_n^J := (\zeta_n^J , w_n^J)\}_{J=0}^{J^{\ast}} \subset H^1 \times H^1$ such that for each $J\in \{0,\ldots ,J^{\ast}\}$,
 \begin{align}
 \notag
 &\Psi_j \neq 0 \quad \text{for each $j\in \{1,\ldots, J^{\ast}\}$}, \text{ (there are no profiles if $J^*=0$) }\\
 \notag
 &(u_n ,v_n) = \sum_{j=1}^J U_{\kappa} (-t_n^j) \Psi_j (\cdot -x_n^j) +W_n^J,\\
 &\lim_{J\rightarrow J^{\ast}} \lim_{n\rightarrow \infty} \|U_{\kappa}(t)W_n^J\|_{L_{t}^6([0,\infty), L_x^3)}=0, \label{remainder term}\\
\notag
 &U_{\kappa}(t_n^j)W_{n}^J (\cdot + x_n^j) \rightharpoonup 0\quad \text{weakly in $H^{1} \times H^{1}$ for each $1\leq j\leq J$},\\
 &\lim_{n \rightarrow \infty} [\|u_n\|_{\dot{H}^{\lambda}}^2 - \sum_{j=1}^J \|\phi_j\|_{\dot{H}^{\lambda}}^2 -\|\zeta_n^J\|_{\dot{H}^{\lambda}}^2 ]=0,\label{decoupling1}\\
 &\lim_{n \rightarrow \infty} [\|v_n\|_{\dot{H}^{\lambda}}^2 - \sum_{j=1}^J \|\psi_j\|_{\dot{H}^{\lambda}}^2 -\|w_n^J\|_{\dot{H}^{\lambda}}^2 ]=0,\label{decoupling2}\\
 &\lim_{n\rightarrow \infty} [N(u_n,v_n) - \sum_{j=1}^J N(U_{\kappa}(-t_n^j) \Psi_j) - N(W_n^J)]=0,\label{decoupling3}
 \end{align}
 for all $\lambda \in [0,1]$. 
 Furthermore, for each $j\neq k$ it follows that
 \begin{align}
 |t_n^j -t_n^k|+|x_n^j -x_n^k| \rightarrow \infty \quad \text{as $n\rightarrow \infty$.} \label{asymptotic orthogonality}
 \end{align}
\end{proposition}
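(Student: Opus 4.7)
The plan is to iterate the single-profile extraction lemma (the system version just proved), peeling off one profile at a time until the linear evolution of the remainder becomes small in $L_t^\infty L_x^3$, and then to upgrade this to smallness in $L_t^6 L_x^3$. Put $W_n^0 := (u_n, v_n)$ and $A_0 := \lim_n \|U_{\kappa}(t) W_n^0\|_{L_t^\infty((0,\infty), L_x^3)}$. Given $W_n^{J-1}$ with $A_{J-1} > 0$, apply the one-profile lemma to $\{W_n^{J-1}\}$ to produce a profile $\Psi_J$, parameters $(t_n^J, x_n^J)$, and a new remainder
\begin{align*}
W_n^J := W_n^{J-1} - U_{\kappa}(-t_n^J)\Psi_J(\cdot - x_n^J),
\end{align*}
and set $A_J := \lim_n \|U_{\kappa}(t) W_n^J\|_{L_t^\infty L_x^3}$. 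If at any finite stage $A_J = 0$, stop with $J^* := J$; otherwise $J^* := \infty$.

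The Sobolev and nonlinear decouplings \eqref{decoupling1}--\eqref{decoupling3} follow by induction on $J$ from their single-step counterparts in the previous lemma. In particular, the $H^1$-decoupling yields
\begin{align*}
\sum_{j=1}^J \|\Psi_j\|_{H^1}^2 \leq \limsup_n \|(u_n, v_n)\|_{H^1}^2 \leq a^2,
\end{align*}
so $\limsup_n \|W_n^J\|_{H^1} \leq a$ uniformly in $J$ (which licenses reapplying the lemma at every step), and forces $\|\Psi_J\|_{H^1} \to 0$. The quantitative bound $\|\Psi_J\|_{H^1} \gtrsim A_{J-1}^{p} a^{-q}$ from the lemma then forces $A_J \to 0$ as $J \to J^*$. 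To upgrade to \eqref{remainder term}, observe that $(12/5, 3)$ is a Strichartz-admissible pair in dimension five, hence $\|U_{\kappa}(t)W_n^J\|_{L_t^{12/5} L_x^3} \lesssim \|W_n^J\|_{L^2} \leq a$, and interpolation between $L_t^{12/5}$ and $L_t^\infty$ gives
\begin{align*}
\|U_{\kappa}(t) W_n^J\|_{L_t^6 L_x^3} \lesssim \|U_{\kappa}(t) W_n^J\|_{L_t^\infty L_x^3}^{3/5}\, \|U_{\kappa}(t) W_n^J\|_{L_t^{12/5} L_x^3}^{2/5} \longrightarrow 0
\end{align*}
as $J \to J^*$ and $n \to \infty$.

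For the asymptotic orthogonality \eqref{asymptotic orthogonality}, I would argue by contradiction and induction on $k - j$. Fix $j < k$ and suppose on a subsequence that $(t_n^j - t_n^k,\, x_n^j - x_n^k) \to (t_*, x_*) \in \R \times \R^5$. By construction $\Psi_k$ is the weak $H^1 \times H^1$ limit of $U_{\kappa}(t_n^k) W_n^{k-1}(\cdot + x_n^k)$. Writing $W_n^{k-1} = W_n^{j} - \sum_{\ell = j+1}^{k-1} U_{\kappa}(-t_n^\ell)\Psi_\ell(\cdot - x_n^\ell)$ and invoking the inductive orthogonality between $(t_n^\ell, x_n^\ell)$ and $(t_n^j, x_n^j)$ for $j < \ell < k$, each subtracted term contributes zero weakly after translation and propagation by $(t_n^k, x_n^k)$. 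By weak continuity of $U_{\kappa}(t_*)$ and of translation by $x_*$, the remaining term equals $U_{\kappa}(t_*)\bigl(\text{w-}\lim_n U_{\kappa}(t_n^j) W_n^j(\cdot + x_n^j)\bigr)(\cdot + x_*) = 0$, contradicting $\Psi_k \neq 0$.

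I expect the main obstacle to be the inductive bookkeeping for the nonlinear decoupling \eqref{decoupling3}. Unlike the $\dot H^\lambda$-norms, $N(u,v) = \re \int u^2 \bar v\, dx$ is trilinear and intrinsically couples the two components, so the induction step must control three-fold cross-terms of the shape $\int (\cdot)_{j,n}\,(\cdot)_{k,n}\,(\cdot)_{\ell,n}\, dx$ for possibly distinct indices $j, k, \ell$. These are tamed by combining pairwise orthogonality from \eqref{asymptotic orthogonality} with repeated application of the single-step decoupling \eqref{decoupling of nonlinear term} to the nested remainders, splitting according to whether any time parameter escapes to infinity (handled by the dispersive estimate) or stays bounded (handled by Rellich--Kondrachov together with the weak vanishing of the remainder after the correct shift), exactly as in the one-profile proof. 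Once \eqref{remainder term} and \eqref{decoupling3} are secured, the remaining conclusions are straightforward consequences of the construction.
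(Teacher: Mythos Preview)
Your proposal is correct and is precisely the standard iteration argument the paper has in mind; indeed the paper's own proof consists of the single sentence ``This follows from standard iteration argument. For details, see Theorem~5.1 in \cite{FaXiCa11}.''  One minor remark: the nonlinear decoupling \eqref{decoupling3} is easier than you anticipate --- since the single-profile lemma already supplies $N(W_n^{J-1}) - N(U_\kappa(-t_n^J)\Psi_J) - N(W_n^J) \to 0$ at each step, the full statement follows by a clean telescoping induction without ever needing to confront three-fold cross-terms with distinct indices directly.
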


\begin{proof}
This follows from standard iteration argument. For details, see Theorem 5.1 in \cite{FaXiCa11}.
\end{proof}

\begin{remark}
We note that we may take $t_n^j$ such that $t_n^j \equiv 0$ or $|t_n^j| \to \infty$ and similarly $x_n^j$ satisfying $x_n^j \equiv 0$ or $|x_n^j| \to \infty$ after some modifications. 
In the radial case, we can show $x_n^j \equiv 0$. We give a rough sketch of the proof. Let $\{(u_n,v_n)\}$ be a bounded sequence of radially symmetric functions in $H^1 \times H^1$. Assume $x_n^j \not\equiv 0$ and thus $|x_n^j| \to \infty$. We take a sequence $\{A_l\} \subset SO(5)$ such that $A_l \neq A_m$ for $l \neq m$. Then, we get $|A_l x_n^j -A_m x_n^j| \to \infty$ as $n \to \infty$. By the profile decomposition, we get $A_l U_{\kappa}(t_n^j) (u_n,v_n)(\cdot+x_n^j)=U_{\kappa}(t_n^j) (u_n,v_n)(\cdot+A_l x_n^j) \rightharpoonup A_l \Psi^j$ weakly in $H^1 \times H^1$. Therefore, we get
\begin{align*}
	\liminf_{n \to \infty} \| (u_n,v_n) \|_{L^2}^2 \geq \sum_{l=1}^{\infty} \| A_l \Psi^j \|_{L^2}^2 =\sum_{l=1}^{\infty} \| \Psi^j \|_{L^2}^2 = \infty.
\end{align*}
This contradicts a boundedness of $\{(u_n,v_n)\}$. 
\end{remark}

We show the decomposition of the functional $I_{\omega}$ and $K$.

\begin{lemma}\label{b}
Let $M \in \mathbb{N}$ and $\{\Psi_j\}_{j=1}^{M} \subset H^1 \times H^1 \setminus \{(0,0)\}$ be such that
\begin{align*}
&\sum_{j=1}^M I_{\omega} (\Psi_j) -\varepsilon \leq I_{\omega}(\sum_{j=1}^M \Psi_j) \leq I_{\omega}(\phi_{\omega},\psi_{\omega}) -\delta\\
&0<K(\sum_{j=1}^M \Psi_j) \leq \sum_{j=1}^M K(\Psi_j) +\varepsilon,  
\end{align*}
where $\delta$ and $\varepsilon$ are positive constant with $2\varepsilon < \delta$. Then we have
\begin{align*}
0<I_{\omega} (\Psi_j) < I_{\omega} (\phi_{\omega} ,\psi_{\omega}), \quad K(\Psi_j) >0
\end{align*}
for any $j\in \{1,\ldots , M\}$.
\end{lemma}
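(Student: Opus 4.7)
The strategy is to exploit the decomposition $I_\omega = J_\omega + \tfrac{2}{5}K$ together with Lemma \ref{a}, which identifies $\mu_\omega := I_\omega(\phi_\omega,\psi_\omega)$ with $\inf\{J_\omega(\Psi) : \Psi \neq 0,\ K(\Psi)\leq 0\}$. In particular, any nonzero $\Psi$ with $K(\Psi)\leq 0$ obeys $J_\omega(\Psi)\geq \mu_\omega$, and $J_\omega = \tfrac{1}{10}L + \tfrac{\omega}{2}M$ is non-negative on all of $H^1\times H^1$ and vanishes only at $0$.

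First I would show $K(\Psi_j)>0$ for every $j$. Suppose for contradiction that $K(\Psi_{j_0})\leq 0$ for some index $j_0$. Since $\Psi_{j_0}\neq 0$, Lemma \ref{a} gives $J_\omega(\Psi_{j_0})\geq \mu_\omega$, and the non-negativity of $J_\omega$ on the remaining summands yields the lower bound $\sum_{j=1}^M J_\omega(\Psi_j) \geq \mu_\omega$. On the other hand, rewriting $J_\omega = I_\omega - \tfrac{2}{5}K$ and invoking the two hypotheses,
\begin{align*}
\sum_{j=1}^M J_\omega(\Psi_j)
&= \sum_{j=1}^M I_\omega(\Psi_j) - \tfrac{2}{5}\sum_{j=1}^M K(\Psi_j) \\
&\leq I_\omega\Big(\sum_{j=1}^M \Psi_j\Big) + \varepsilon - \tfrac{2}{5}\Big(K\Big(\sum_{j=1}^M \Psi_j\Big) - \varepsilon\Big) \\
&\leq (\mu_\omega - \delta) + \tfrac{7}{5}\varepsilon,
\end{align*}
where the last step uses $I_\omega(\sum_j \Psi_j) \leq \mu_\omega - \delta$ together with $K(\sum_j \Psi_j) > 0$ to discard the non-positive contribution $-\tfrac{2}{5}K(\sum_j\Psi_j)$. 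The assumption $2\varepsilon < \delta$ gives $\tfrac{7}{5}\varepsilon < \delta$, so the upper bound is strictly less than $\mu_\omega$, contradicting the lower bound.

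With $K(\Psi_j)>0$ and $\Psi_j \neq 0$ established for every $j$, Lemma \ref{comp} immediately yields $I_\omega(\Psi_j) > \tfrac{\omega}{2}M(\Psi_j) + \tfrac{1}{10}L(\Psi_j) > 0$. Using this termwise positivity together with the first hypothesis, for any fixed index $j_0$,
\begin{align*}
0 < I_\omega(\Psi_{j_0}) \leq \sum_{j=1}^M I_\omega(\Psi_j) \leq I_\omega\Big(\sum_{j=1}^M \Psi_j\Big) + \varepsilon \leq \mu_\omega - \delta + \varepsilon < \mu_\omega,
\end{align*}
the final strict inequality following from $\varepsilon < \delta/2 < \delta$. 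This yields both claimed bounds.

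The only subtle point is Step 1, where one must combine $J_\omega = I_\omega - \tfrac{2}{5}K$ with the two slightly asymmetric hypotheses while leveraging $K(\sum_j \Psi_j) > 0$ to absorb the otherwise troublesome $-\tfrac{2}{5}K$ term; the remainder is arithmetic bookkeeping, and Step 2 is then a direct consequence of Lemma \ref{comp} and the first hypothesis.
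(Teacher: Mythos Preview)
Your proof is correct and follows essentially the same route as the paper: both argue by contradiction using Lemma \ref{a} to get $J_\omega(\Psi_{j_0})\geq \mu_\omega$, then bound $\sum_j J_\omega(\Psi_j)$ from above via $J_\omega = I_\omega - \tfrac{2}{5}K$ and the two hypotheses to reach $\mu_\omega - \delta + \tfrac{7}{5}\varepsilon < \mu_\omega$, and finally deduce $0<I_\omega(\Psi_j)<\mu_\omega$ from Lemma \ref{comp} and termwise positivity. Your write-up is slightly more explicit about discarding the $-\tfrac{2}{5}K(\sum_j\Psi_j)$ term via $K(\sum_j\Psi_j)>0$, but the argument is the same.
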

\begin{proof}
We argue by contradiction and so assume $K(\psi_i)\leq 0$ for some $i$. Then it follows from Lemma \ref{a} that
\begin{align*}
I_{\omega} (\phi_{\omega} , \psi_{\omega}) 
&\leq J_{\omega} (\Psi_i)\\
&\leq \sum_{j=1}^M J_{\omega} (\Psi_j)= \sum_{j=1}^M\left( I_{\omega}(\Psi_j) - \frac{2}{5}K(\Psi_j) \right)\\
&\leq I_{\omega}(\sum_{j=1}^M \Psi_j) + \varepsilon +\frac{2}{5}\varepsilon 
<I_{\omega} (\phi_{\omega},\psi_{\omega}).
\end{align*} 
This is a contradiction. Thus we get $K(\Psi_j) >0$ and so $I_{\omega} (\Psi_j) >0$ for any $j\in \{0,\ldots, M\}$ by Lemma \ref{comp}. Then we also get 
\begin{align*}
I_{\omega}(\Psi_j)\leq \sum_{\ell =1}^M I_{\omega} (\Psi_{\ell}) \leq I_{\omega}(\phi_{\omega},\psi_{\omega}) -\delta + \varepsilon <I_{\omega}(\phi_{\omega},\psi_{\omega}).
\end{align*}
\end{proof}

\subsection{Construction of a critical element}

We define critical energy-mass $I_{\omega}^c$ as follows. 
\begin{definition}
\begin{align*}
I_{\omega}^c := \sup \{\delta \in (0, \mu_{\omega}):
&\text{If $(u,v)$ is a maximal-lifespan solution to \eqref{NLS} satisfying} \\
&\text{$K(u,v)>0$ and $I_{\omega}(u,v) <\delta$, then $(u,v)\in L_t^6 ([0,\infty), L^3_x).$} \}
\end{align*}
\end{definition}
Hereafter, we suppose that $I_{\omega}^c < \mu_{\omega} (= I_{\omega}(\phi_{\omega},\psi_{\omega}))$.

Our aim in this section is to prove the following existence result of a critical element.

\begin{proposition}[Existence of a critical element]\label{EC_0}
There exists a maximal-lifespan solution $(u^c,v^c)$ to \eqref{NLS} which satisfies
\begin{align*}
I_{\omega}(u^c,v^c)= I_{\omega}^c< \mu_{\omega},\quad K(u^c_0,v^c_0) >0,
\quad S (u^c ,v^c) =+\infty
\end{align*}
and there exists $x\in C([0,\infty), \mathbb{R}^5)$ such that
 $\{(u^c,v^c)(t, \cdot +x(t)) \mid t\in [0,\infty)\}$ is precompact in $H^1 \times H^1$.
\end{proposition}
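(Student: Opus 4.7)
The plan is to follow the Kenig--Merle \cite{KeMe06} concentration--compactness scheme. By the definition of $I_\omega^c$ and the assumption $I_\omega^c < \mu_\omega$, we select a sequence of maximal-lifespan solutions $(u_n, v_n)$ to \eqref{NLS} with $K(u_{n,0}, v_{n,0}) > 0$, $S(u_n, v_n) = +\infty$, and $I_\omega(u_{n,0}, v_{n,0}) \searrow I_\omega^c$. Lemma \ref{comp} ensures that $\{(u_{n,0}, v_{n,0})\}$ is bounded in $H^1 \times H^1$. We apply Proposition \ref{profile} to obtain profiles $\Psi_j$, parameters $(t_n^j, x_n^j)$ with $t_n^j \equiv 0$ or $|t_n^j| \to \infty$, and remainders $W_n^J$. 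To each profile we associate a nonlinear profile $(U_j, V_j)$: if $t_n^j \equiv 0$ we take the solution of \eqref{NLS} with data $\Psi_j$; if $|t_n^j| \to \infty$ we invoke Lemma \ref{FS}.

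The core step is to show that exactly one profile is nontrivial. Combining \eqref{decoupling1}--\eqref{decoupling3} with the dispersive vanishing of $N(U_\kappa(-t_n^j)\Psi_j)$ when $|t_n^j|\to\infty$ yields
\begin{align*}
I_\omega^c + o(1) &= \sum_{j=1}^{J}\alpha_n^j + I_\omega(W_n^J) + o(1), \\
K(u_{n,0},v_{n,0}) &= \sum_{j=1}^{J}\beta_n^j + K(W_n^J) + o(1),
\end{align*}
where $\alpha_n^j \to I_\omega(\Psi_j)$ (respectively $\frac{1}{2}L(\Psi_j)+\frac{\omega}{2}M(\Psi_j)$) and $\beta_n^j \to K(\Psi_j)$ (respectively $L(\Psi_j)$) according to whether $t_n^j \equiv 0$ or $|t_n^j|\to\infty$. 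Applying Lemma \ref{b} (treating the remainder as an additional profile when it does not vanish) forces $K>0$ and $0<I_\omega<\mu_\omega$ on each piece; if more than one profile were nontrivial, then each would contribute strictly less than $I_\omega^c$, so by the definition of $I_\omega^c$ we would have $S(U_j,V_j)<\infty$ for every $j$. Using the asymptotic orthogonality \eqref{asymptotic orthogonality} to control cross-terms in the quadratic nonlinearity, together with \eqref{remainder term} and small-data scattering to handle $W_n^J$, one verifies that $\sum_{j=1}^{J}(U_j,V_j)(t-t_n^j,\,\cdot\,-x_n^j)+U_\kappa(t)W_n^J$ is an approximate solution to \eqref{NLS}. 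The stability proposition then forces $S(u_n,v_n)<\infty$, contradicting the choice of $(u_n,v_n)$.

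Hence there is exactly one nontrivial profile $\Psi^1$; set $(u^c,v^c)$ to be the associated nonlinear profile. The remainder vanishes in $H^1\times H^1$, which gives $I_\omega(u^c_0,v^c_0)=I_\omega^c$ and $K(u^c_0,v^c_0)>0$; moreover $S(u^c,v^c)=+\infty$, for otherwise stability would again give $S(u_n,v_n)<\infty$. For precompactness of the forward orbit, take any sequence $\tau_n\to\infty$ in $[0,\infty)$; then $\{(u^c(\tau_n),v^c(\tau_n))\}$ again saturates $I_\omega^c$ and cannot be the initial data of a scattering sequence, so the same single-profile argument applies and yields $x_n\in\R^5$ such that $(u^c(\tau_n,\cdot+x_n),v^c(\tau_n,\cdot+x_n))$ converges in $H^1\times H^1$ along a subsequence. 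A diagonal/continuity argument then produces the continuous function $x$. The main technical obstacle is the stability step: one must estimate all cross-terms $U_j\overline{U_k}$, $V_j\overline{U_k}$ for $j\neq k$ and their interactions with the linear remainder $U_\kappa(t)W_n^J$ in Strichartz norms such as $L_t^3 L_x^{3/2}$, relying on dispersive decay and the asymptotic orthogonality \eqref{asymptotic orthogonality}.
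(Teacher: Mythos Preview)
Your proposal is correct and follows essentially the same Kenig--Merle scheme as the paper, which organizes the argument into a Palais--Smale proposition (Proposition~\ref{PS}), extraction of the critical element (Proposition~\ref{EC}), and precompactness of its orbit (Proposition~\ref{almost periodic}), exactly as you outline. The one step you should make explicit is, in the precompactness argument, ruling out the possibility $t_n\to\infty$ in the single-profile decomposition of $(u^c(\tau_n),v^c(\tau_n))$: the Palais--Smale conclusion only gives $t_n\equiv 0$ \emph{or} $t_n\to\infty$, and strong $H^1\times H^1$ convergence requires the former; the paper handles this in Lemma~\ref{compactness lemma} by a time-reversal argument showing that $t_n\to\infty$ would force $\|(u^c,v^c)\|_{L_t^6([0,\tau_n),L_x^3)}\to 0$ and hence $S(u^c,v^c)=0$, a contradiction.
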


To show this, we prepare the following Palais--Smale type result.

\begin{proposition}[Palais--Smale type condition modulo symmetries]\label{PS}
Let $(u_n , v_n)$ be a sequence of maximal-lifespan solutions to \eqref{NLS} such that
\begin{align*}
K (u_n ,v_n) >0 ,\quad I_{\omega}(u_n ,v_n) \rightarrow I_{\omega}^{c},
\end{align*}
and 
\begin{align*}
S (u_n ,v_n) =+\infty \quad \text{for any $n\geq 1.$}
\end{align*}
where $S (u,v):= \|(u,v)\|_{L_t^{6}([0 , \infty ), L_x^3 )}$. Then, passing to a subsequence if necessary, there exist $\Psi \in H^1 \times H^1$, 
$\{W_n\}\subset H^1 \times H^1$, $\{(t_n,x_n)\}\subset [0,\infty)\times \mathbb{R}^5$ 
such that
\begin{align*}
&(u_n ,v_n)(0) = U_{\kappa}(-t_n) \Psi (\cdot -x_n) + W_n, \quad n\geq 1,\\
&t_n \equiv 0 \quad  \text{or} \quad t_n \rightarrow \infty \quad \text{as $n\rightarrow \infty$},\\
&\|W_n\|_{H^1} \rightarrow 0 \quad \text{as $n\rightarrow \infty$}.
\end{align*}
\end{proposition}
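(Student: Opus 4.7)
The plan is to apply the linear profile decomposition (Proposition \ref{profile}) to the sequence $\{(u_n,v_n)(0)\}$ and then run a concentration-compactness/stability argument in the style of Kenig--Merle to force the decomposition to consist of a single profile with vanishing remainder, following the template of \cite{FaXiCa11} adapted to the system.

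First I would note that $\{(u_n,v_n)(0)\}$ is bounded in $H^1\times H^1$, which follows from $K(u_n,v_n)>0$, $I_\omega(u_n,v_n)\to I_\omega^c<\mu_\omega$, and Lemma \ref{comp}. Applying Proposition \ref{profile} and passing to a subsequence, I extract profiles $\{\Psi^j\}$, parameters $\{(t_n^j,x_n^j)\}$ (arranged so that each $t_n^j\equiv 0$ or $|t_n^j|\to\infty$), and remainders $\{W_n^J\}$. Attach to each $\Psi^j$ a nonlinear profile $\tilde\Psi^j$: when $t_n^j\equiv 0$, let $\tilde\Psi^j$ be the \eqref{NLS} solution with data $\Psi^j$; when $t_n^j\to\pm\infty$, use the final-state problem (Lemma \ref{FS}) to build a solution scattering to $U_\kappa(t)\Psi^j$ as $t\to\mp\infty$. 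Set $\tilde\Psi^j_n(t,x):=\tilde\Psi^j(t-t_n^j,x-x_n^j)$, so that $\tilde\Psi^j_n(0)\approx U_\kappa(-t_n^j)\Psi^j(\cdot-x_n^j)$.

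The core of the proof is a contradiction argument. From \eqref{decoupling1}--\eqref{decoupling3}, together with the fact that $L$ and $M$ are conserved by $U_\kappa$ while $N(U_\kappa(-t_n^j)\Psi^j)\to 0$ by the dispersive estimate whenever $|t_n^j|\to\infty$, the functionals $I_\omega$, $J_\omega$, and $K$ all decouple along the decomposition. Combined with Lemma \ref{b}, this forces each profile's effective invariants to satisfy $I_\omega^{\mathrm{eff}}(\Psi^j)>0$, $K^{\mathrm{eff}}(\Psi^j)>0$, and $\sum_j I_\omega^{\mathrm{eff}}(\Psi^j)+\lim_n I_\omega(W_n^J)\leq I_\omega^c$. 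Now assume for contradiction that either $J^*\geq 2$, or $J^*=1$ but $\limsup_n I_\omega(W_n)>0$. Then each $I_\omega^{\mathrm{eff}}(\Psi^j)$ is strictly less than $I_\omega^c$, so by the definition of $I_\omega^c$ every nonlinear profile scatters with finite $S$-norm. Form the approximation $\tilde u_n^J(t):=\sum_{j=1}^J \tilde\Psi^j_n(t)$. Using the asymptotic orthogonality \eqref{asymptotic orthogonality} together with Lemma \ref{uniform orthogonality}, the interaction cross terms between distinct profiles vanish in the Strichartz dual norm $L^3_tL^{3/2}_x$, while the tail is summable by the small-data scattering of Lemma \ref{SDS}. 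Thus the equation error $e_n^J:=(i\partial_t + \Delta, i\partial_t + \kappa\Delta)\tilde u_n^J-\mathcal{N}(\tilde u_n^J)$ is small, and \eqref{remainder term} gives $\|U_\kappa(t)W_n^J\|_{L^6_tL^3_x}\to 0$ as $J\to J^*$, $n\to\infty$. Applying the Stability proposition yields $S(u_n,v_n)<\infty$ for large $n$, contradicting the hypothesis.

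This forces $J^*=1$ and $I_\omega(W_n)\to 0$. To exclude $t_n^1\to-\infty$, observe that in that case $\tilde\Psi^1$ scatters forward so that $\|\tilde\Psi^1_n\|_{L^6_tL^3_x([0,\infty))}\to 0$, and the stability argument again gives $S(u_n,v_n)<\infty$, a contradiction; hence $t_n^1\equiv 0$ or $t_n^1\to +\infty$. To upgrade $I_\omega(W_n)\to 0$ to $\|W_n\|_{H^1}\to 0$, I would show $K(W_n)>0$ eventually: suppose instead $K(W_n)\leq 0$ with $W_n\neq 0$ along a subsequence. Lemma \ref{a} then gives $J_\omega(W_n)\geq\mu_\omega$, and the decoupling of $J_\omega=\tfrac{1}{10}L+\tfrac{\omega}{2}M$ (whose summands are preserved by $U_\kappa$) yields $J_\omega(u_n)\geq J_\omega(\Psi^1)+\mu_\omega-o(1)\geq\mu_\omega-o(1)$; but $J_\omega(u_n)=I_\omega(u_n)-\tfrac{2}{5}K(u_n)<I_\omega(u_n)\to I_\omega^c<\mu_\omega$, a contradiction. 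With $K(W_n)>0$, Lemma \ref{comp} applied to $W_n$ gives $I_\omega(W_n)\gtrsim\|W_n\|_{H^1}^2$, and the claim follows.

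I expect the main obstacle to be the stability step with many profiles: carefully estimating the cross terms $\tilde\Psi^j_n\cdot \tilde\Psi^k_n$ for $j\neq k$ (arising from both nonlinearities $v\bar u$ and $u^2$) in $L^3_tL^{3/2}_x$ using the asymptotic orthogonality \eqref{asymptotic orthogonality} and Lemma \ref{uniform orthogonality}, and then summing the tail $\sum_j \|\tilde\Psi^j_n\|_S$ with the help of the small-data scattering of Lemma \ref{SDS} for profiles whose $I_\omega$ is small enough.
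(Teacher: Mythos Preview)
Your proposal is correct and follows essentially the same Kenig--Merle/\cite{FaXiCa11} template as the paper's proof: profile decomposition, decoupling of $I_\omega$ and $K$ via Lemma \ref{b}, a stability-based contradiction to force a single profile, and then Lemma \ref{comp} to upgrade $I_\omega(W_n)\to 0$ to $\|W_n\|_{H^1}\to 0$. The only cosmetic differences are that the paper obtains $K(W_n^J)>0$ directly from Lemma \ref{b} (applied to profiles and remainder together) rather than via your separate Lemma \ref{a} argument, and that in the paper's framework $t_n^j\in[0,\infty)$ from the outset, so your exclusion of $t_n^1\to -\infty$ is unnecessary.
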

\begin{proof}
Passing to a subsequence if necessary, we may assume that
\begin{align*}
I_{\omega}(u_n ,v_n) < I_{\omega}(\phi_{\omega},\psi_{\omega}) -\delta \quad \forall n\geq 1,
\end{align*}
for some $\delta>0.$ Then by Lemma \ref{comp}, $(u_n , v_n)(0)$ is bounded in $H^1 \times H^1$ and so we can apply Proposition \ref{profile}. Let
\begin{align*}
(u_n,v_n)(0) = \sum_{j=1}^J U_{\kappa} (-t_n) \Psi_j (\cdot -x_n^j) +W_n^J
\end{align*}
be the profile decomposition with stated properties. We may assume $t_n^j \equiv 0$ or $\lim_{n\rightarrow \infty} t_n^j=\infty$ for each $j$, after modifying remainder terms.
\footnote{If $J^{\ast}=0$ then we get a contradiction by Lemma \ref{SDS}. So we have at least one profile.} By \eqref{decoupling1}, \eqref{decoupling2}, and \eqref{decoupling3}, passing to a subsequence if necessary and using diagonalization argument, we have
\begin{align*}
&\sum_{j=1}^J I_{\omega} (U_{\kappa}(-t_n^j) \Psi_{j}) + I_{\omega}(W_n^J)-\varepsilon \leq I_{\omega}(u_n (0),v_n (0)) \leq I_{\omega}(\phi_{\omega}, \psi_{\omega})-\delta,\\
&0<K(u_n (0), v_n (0)) \leq \sum_{j=1}^J K(U_{\kappa}(-t_n^j) \Psi_{j}) + K(W_n^J) + \varepsilon,
\end{align*}
for some $\varepsilon \in (0,\delta /2 )$ and any $J, n$. Applying Lemma \ref{b}, we obtain that
\begin{align}
\notag
&0<I_{\omega}(U_{\kappa}(-t_n^j)\Psi_j),\quad I_{\omega}(W_{n}^J) < I_{\omega}(\phi_{\omega}, \psi_{\omega}),\\
&K(U_{\kappa}(-t_n^j)\Psi_j),\quad K(W_n^J) >0, \label{lower bound of K}
\end{align}
for any $n,J,j$. 
\begin{align}
\sum_{j=1}^J I_{\omega} (U_{\kappa}(-t_n^j) \Psi_{j}) 
&=- I_{\omega}(W_n^J) +I_{\omega}(u_n (0), v_n (0)) +o_n (1)\label{decoupling of I}\\
\notag
&\leq I_{\omega}(u_n (0), v_n (0)) +o_n (1).
\end{align}
Thus, by the dispersive estimate, we obtain 
\begin{align}\label{bound1}
\sum_{j=1}^{J}\left( \frac{\omega}{2}M(\Psi_j) + \frac{1}{2}L(\Psi_j) +\frac{1}{2} A_j \right) \leq I_{\omega}^c,
\end{align}
where $A_j$ is defined by 
\begin{align*}
A_j := 
\begin{cases}
N(\Psi_j)\quad &\text{if $t_n^j \equiv 0$},\\
0\quad & \text{if $t_n^j \rightarrow \infty$}.
\end{cases}
\end{align*}
The following lemma is the key step in this proof.
\begin{lemma}\label{only one profile}
$\frac{\omega}{2}M(\Psi_1) + \frac{1}{2}L(\Psi_1) +\frac{1}{2} A_1 = I_{\omega}^c.$
\end{lemma}
\begin{proof}[Proof of Lemma \ref{only one profile}]
If not, there exists $\delta_1 >0$ such that
\begin{align*}
\sup_{1\leq j\leq J^{\ast}} \left( M(\Psi_j) + \frac{1}{2}L(\Psi_j) +\frac{1}{2} A_j \right) <I_{\omega}^c -\delta_1.
\end{align*}
We define nonlinear profiles $(a^j , b^j)$ as follows:
\begin{itemize}
\item If $t_n^{j}\equiv 0$, we define $(a^j ,b^j)$ to be  the maximal-lifespan solution to \eqref{NLS} with $(a^j ,b^j)(0)= \Psi_j$.
\item If $t_n^j \rightarrow \infty$, we define $(a^j , b^j)$ to be the maximal-lifespan solution which scatters  to $U_{\kappa}(-t)\Psi_j$ as $t\rightarrow +\infty$.
\end{itemize}
Using $(a^j ,b^j)$, we define $(a_n^j , b_n^j):=(a^{j}, b^j)(\cdot -t_n^j , \cdot -x_n^j)$. Note that we easily get 
\begin{align}
\notag
&I_{\omega} (a_n^j , b_n^j)(0) = M(\Psi_j) + \frac{1}{2}L(\Psi_j) +\frac{1}{2} A_j <I_{\omega}^c -\delta_1,\\
&\lim_{n\rightarrow \infty}[(u_n , v_n)(0) -\sum_{j=1}^J (a_n^j , b_n^ j )(0) - W_n^J ]=0 \quad \text{in $H^1 \times H^1$}.\label{conv}
\end{align}
Furthermore, by \eqref{bound1} we get
\begin{align*}
\sum_{j=1}^{J^{\ast}}I_{\omega} (a^j , b^j) \leq I_{\omega}^c < \infty,
\end{align*}
and so by small data scattering, there exists $j_{0} \in \mathbb{N}$ such that
\begin{align*}
\|(a_n^j , b_n^j )(0)\|_{H^1} < \delta_{\text{sd}},
\end{align*}
and
\begin{align*}
\|(a_n^j , b_n^j)\|_{L_{t,x}^{\frac{14}{5}} \cap L_{t}^{6}L_{x}^3 \cap L_t^{\infty}H_x^1 (\mathbb{R}^{1+5})}^2
&=\|(a^j , b^j)\|_{L_{t,x}^{\frac{14}{5}} \cap L_{t}^{6}L_{x}^3 \cap L_t^{\infty}H_x^1 (\mathbb{R}^{1+5})}^2
\\
& \lesssim \|(a^j ,b^j)(0)\|_{H^1}^2 \lesssim I_{\omega} (a^j ,b^j),
\end{align*}
for any $j\geq j_0 +1.$ Now we define the approximate solution $(u_n^J , v_n^J)$ as follows:
\begin{align*}
(u_n^J , v_n^J) := \sum_{j=1}^ J (a_n^  j , b_n^j) + U_{\kappa} (t) W_n^J.
\end{align*}
Then $(u_n^J , v_n^J)$ is a solution to following equation
\begin{align*}
\begin{cases}
i\partial_t u_n^J + \Delta u_n^J = v_n^J \bar{u}_n^J + e^J_{1,n},\\
i\partial_t v_n^J + \kappa \Delta v_n^J = (u_n^J)^2+ e^J_{2,n},
\end{cases}
\end{align*}
where $e^J_{1,n} := \sum_{j=1}^J b_n^j \bar{a}_n^j -  v_n^J \bar{u}_n^J$ and $e_{2,n}^J :=\sum_{j=1}^J (a_n^J)^2  -(u_n^J)^2$.
To derive a contradiction, we prepare the following three lemmas.

\begin{lemma}\label{convergence of initial value} For any $J\geq 1$, the following holds:
\begin{align*}
\lim_{n\rightarrow \infty}\|(u_n^J , v_n^J)(0) - (u_n, v_n)(0)\|_{H^1}=0.
\end{align*}
\end{lemma}

\begin{proof}[Proof of Lemma \ref{convergence of initial value}]
This follows immediately from \eqref{conv}.
\end{proof}

\begin{lemma}\label{bound of scattering size}
The following holds:
\begin{align*}
\limsup_{J\rightarrow J^{\ast}} \limsup_{n\rightarrow \infty} S(u_n^J , v_n^J) \leq L_1,
\end{align*}
for some constant $L_1$.
\end{lemma}
\begin{proof}[Proof of Lemma \ref{bound of scattering size}]
Since we know that $\|(a^j ,b^j)\|_{L_t^6 (\R , L_x^3)}  <+\infty$ for each $j$ and $\lim_{J\rightarrow J^{\ast}} \lim_{n\rightarrow \infty} \|U_{\kappa}(t)W_n^J\|_{L_{t}^6([0,\infty), L_x^3)}=0$ hold, it is sufficient to prove that
\begin{align*}
\limsup_{J\rightarrow J^{\ast}} \limsup_{n\rightarrow \infty} S(\sum_{j=j_0 +1}^J (a_n^j , b_n^j) )\lesssim 1.
\end{align*}
Note that by interpolation, we have
\begin{align*}
&\|\sum_{j=j_0 +1}^J (a_n^j , b_n^j)\|_{L_x^3} \lesssim  \|\sum_{j=j_0 +1}^J (a_n^j , b_n^j) \|_{H^1}^{\frac{8}{15}}\|\sum_{j=j_0 +1}^J (a_n^j , b_n^j)\|_{L_x^{\frac{14}{5}}}^{\frac{7}{15}},\\  
&S(\sum_{j=j_0 +1}^J (a_n^j , b_n^j))\lesssim  \|\sum_{j=j_0 +1}^J (a_n^j , b_n^j)\|_{L_t^{\infty} ([0,\infty) , H^1)}^{\frac{8}{15}} \|\sum_{j=j_0 +1}^J (a_n^j , b_n^j)\|_{L_{t}^{\frac{14}{5}}([0,\infty) , L_x^{\frac{14}{5}}) }^{\frac{7}{15}}.
\end{align*}
Therefore it suffices to show that
\begin{align}
\limsup_{J\rightarrow J^{\ast}} \limsup_{n\rightarrow \infty} \|\sum_{j=j_0 +1}^J (a_n^j , b_n^j)\|_{L_t^{\infty} ([0,\infty) , H^1)} \lesssim 1,\label{bound of H1}\\
\limsup_{J\rightarrow J^{\ast}} \limsup_{n\rightarrow \infty} \|\sum_{j=j_0 +1}^J (a_n^j , b_n^j)\|_{L_{t}^{\frac{14}{5}}([0,\infty) , L_x^{\frac{14}{5}}) } \lesssim 1 .\label{bound of Lgamma}
\end{align}
\eqref{bound of H1} can be obtained by Lemma \ref{uniform orthogonality}. Furthermore \eqref{bound of Lgamma} follows by a simple calculation using the asymptotic orthogonality 
condition \eqref{asymptotic orthogonality}.
\end{proof}

\begin{lemma}\label{bound of remainder} The following holds:
\begin{align*}
\lim_{J\rightarrow J^{\ast}} \limsup_{n\rightarrow \infty}\|(e_{1,n}^J , e_{2,n}^J)\|_{L_{t}^3([0,\infty),L_{x}^{\frac{3}{2}})} =0.
\end{align*}
\end{lemma}

\begin{proof}
Here, we only treat the first component. The second component can be treated by a same way. From the H\"{o}lder inequality, we get
\begin{align*}
&\|e_{1,n}^J\|_{L_{t}^3([0,\infty),L_{x}^{\frac{3}{2}})} \\
&\leq \sum_{j\neq k}\|a_n^j b_n^k\|_{L_{t}^3L_{x}^{\frac{3}{2}}} + \|e^{it\Delta} w_n^J\|_{L_{t}^6L_{x}^{3}}
\|\sum_{j=1}^J a_n^j\|_{L_{t}^6L_{x}^{3}}\\
&\quad+ \|e^{i\kappa t\Delta} \zeta_n^J\|_{L_{t}^6L_{x}^{3}} \|\sum_{j=1}^J b_n^j\|_{L_{t}^6L_{x}^{3}} 
  + \|e^{it\Delta} w_n^J\|_{L_{t}^6L_{x}^{3}}  \|e^{i\kappa t\Delta} \zeta_n^J\|_{L_{t}^6L_{x}^{3}}
\\
&\lesssim \sum_{j\neq k}\|a_n^j b_n^k\|_{L_{t}^3L_{x}^{\frac{3}{2}}} + \|e^{it\Delta} w_n^J\|_{L_{t}^6L_{x}^{3}}
+ \|e^{i\kappa t\Delta} \zeta_n^J\|_{L_{t}^6L_{x}^{3}}.
\end{align*}
The last inequality follows by the proof of Lemma \ref{bound of scattering size}. Then by the asymptotic orthogonality condition \eqref{asymptotic orthogonality} and \eqref{remainder term}, we obtain the result.
\end{proof}

Combining these lemmas and applying the stability result, we obtain a contradiction
\footnote{Note that we assumed $S(u_n , v_n)=+\infty$.} and so Lemma \ref{only one profile} is showed.
\end{proof}
Return to the proof of Proposition \ref{PS}. Now by Lemma \ref{only one profile} we get $J^{\ast}=1$ and
\begin{align*}
I_{\omega} (U_{\kappa}(-t_n^1) \Psi_{1}(\cdot -x_n^1)) \rightarrow \frac{\omega}{2}M(\Psi_1) + \frac{1}{2}L(\Psi_1) +\frac{1}{2} A_1 = I_{\omega}^c.
\end{align*}
combining this with \eqref{decoupling of I}, we obtain $I_{\omega}(W_n^1) \rightarrow 0$. By \eqref{lower bound of K} and Lemma \ref{comp}, this implies $\|W_n^1\|_{H^1} \rightarrow 0$.
Therefore we complete the proof.
\end{proof}


We prove the former statements in Proposition \ref{EC_0}.

\begin{proposition}[Existence of a critical element]\label{EC}
There exists a maximal-lifespan solution $(u^c,v^c)$ to \eqref{NLS} which satisfies
\begin{align*}
I_{\omega}(u^c,v^c)= I_{\omega}^c,\quad K(u^c,v^c)(0) >0,
\end{align*}
and
\begin{align*}
S (u^c ,v^c) =+\infty.
\end{align*}
\end{proposition}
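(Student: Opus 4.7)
The plan is the standard Kenig--Merle construction of a critical element. By the definition of $I_\omega^c$ as a supremum, I extract a sequence of maximal-lifespan solutions $(u_n, v_n)$ to \eqref{NLS} with $K(u_n(0), v_n(0)) > 0$, $I_\omega(u_n, v_n) \to I_\omega^c$, and $S(u_n, v_n) = +\infty$. Passing to a subsequence, Proposition \ref{PS} yields the decomposition
\begin{equation*}
(u_n, v_n)(0) = U_\kappa(-t_n)\Psi(\cdot - x_n) + W_n, \qquad \|W_n\|_{H^1} \to 0,
\end{equation*}
with either $t_n \equiv 0$ or $t_n \to \infty$. Note $\Psi \neq (0,0)$, since otherwise $(u_n, v_n)(0) \to 0$ in $H^1$ and small data scattering (Lemma \ref{SDS}) would give $S(u_n, v_n) < \infty$ for large $n$.

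I then define the critical element according to the case. If $t_n \equiv 0$, take $(u^c, v^c)$ to be the maximal-lifespan solution with initial data $\Psi$. If $t_n \to \infty$, invoke Lemma \ref{FS} to produce a solution scattering to the free evolution of $\Psi$ at the relevant infinity, and extend it to a maximal-lifespan solution. Positivity of $K$ at $t=0$ for $(u^c, v^c)$ is obtained as follows. In the first case, pass to the limit in $K(u_n(0)) > 0$ to get $K(\Psi) \geq 0$; if $K(\Psi) = 0$, the variational definition of $\mu_\omega$ would force $I_\omega(\Psi) \geq \mu_\omega$, contradicting $I_\omega(\Psi) = I_\omega^c < \mu_\omega$. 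In the second case, at times $t$ tending to the scattering infinity the dispersive estimate yields $N(u^c, v^c)(t) \to 0$ while $L(u^c, v^c)(t) \to L(\Psi) > 0$, hence $K(u^c, v^c)(t) \to L(\Psi) > 0$ eventually; combined with $I_\omega(u^c, v^c) < \mu_\omega$, Lemma \ref{est of K} and the remark following it propagate $K > 0$ to the entire lifespan and establish forward globality.

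The identity $I_\omega(u^c, v^c) = I_\omega^c$ follows in the case $t_n \equiv 0$ from continuity of $I_\omega$, translation invariance, and $W_n \to 0$. In the case $t_n \to \infty$, dispersive decay gives $N(U_\kappa(-t_n)\Psi) \to 0$ while $M$ and $L$ are conserved under $U_\kappa$, so $I_\omega(u_n(0)) \to \tfrac{\omega}{2}M(\Psi) + \tfrac{1}{2}L(\Psi)$; the same limit equals $I_\omega(u^c, v^c)$ by conservation of $I_\omega$ applied at the scattering end of $(u^c, v^c)$.

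The main obstacle is $S(u^c, v^c) = +\infty$, which I argue by contradiction via the stability proposition. Suppose $S(u^c, v^c) < \infty$. Define approximate solutions $\tilde{u}_n(t, x) := u^c(t - t_n, x - x_n)$, $\tilde{v}_n(t, x) := v^c(t - t_n, x - x_n)$ (with $t_n \equiv 0$ in the first case); these are exact solutions of \eqref{NLS} defined on $[0, \infty)$ for $n$ large, thanks to forward globality of $(u^c, v^c)$. Their $L^6_t([0, \infty), L^3_x)$ norms are uniformly bounded by $\|(u^c, v^c)\|_{L^6_t(\R, L^3_x)}$, which is finite by combining $S(u^c, v^c) < \infty$ with Strichartz on the complementary half-line via the scattering asymptotics. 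The initial-data error $(u_n, v_n)(0) - (\tilde{u}_n, \tilde{v}_n)(0)$ equals $W_n$ in the first case, and $W_n + U_\kappa(-t_n)\Psi(\cdot - x_n) - u^c(-t_n, \cdot - x_n)$ in the second; in both cases it tends to $0$ in $H^1$ (the last piece by the scattering of $(u^c, v^c)$), so by Strichartz its linear evolution tends to $0$ in $L^6_t L^3_x$. The stability proposition then forces $S(u_n, v_n) < \infty$ for large $n$, contradicting $S(u_n, v_n) = +\infty$ and completing the proof.
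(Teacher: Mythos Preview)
Your argument is correct and follows essentially the same route as the paper: extract a minimizing sequence from the definition of $I_\omega^c$, apply Proposition~\ref{PS}, define the nonlinear profile, and use stability against the translated profile to contradict $S(u_n,v_n)=+\infty$ if the profile had finite scattering norm. Your treatment is in fact more detailed than the paper's (which omits the verification of $K(u^c,v^c)(0)>0$ and $I_\omega(u^c,v^c)=I_\omega^c$, writing only ``Then we easily get the result''), and slightly more direct: the paper first shows $\|(a,b)\|_{L^6_t(\R,L^3_x)}=+\infty$ and then splits into the two cases $S(a,b)=+\infty$ or $S(\bar a,\bar b)(-\cdot)=+\infty$, possibly time-reversing; you instead observe that in the case $t_n\to\infty$ the backward half-line norm is automatically finite by the scattering construction, so the hypothesis $S(u^c,v^c)<\infty$ already forces the full $L^6_t(\R,L^3_x)$ norm to be finite, and the contradiction follows without the case split. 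One small imprecision: when you write that $(\tilde u_n,\tilde v_n)$ is defined on $[0,\infty)$ ``thanks to forward globality of $(u^c,v^c)$'', in the case $t_n\to\infty$ you also need $(u^c,v^c)$ to exist on $[-t_n,0]$, which comes from the final-state construction (Lemma~\ref{FS}) rather than from forward globality; but since you have already argued globality in both directions via $K>0$ and $I_\omega<\mu_\omega$, this is only a wording issue.
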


\begin{proof}
By the definition of $I_{\omega}^c$, there exists a sequence $(u_n,v_n)$ of maximal-lifespan solutions to \eqref{NLS} which satisfies 
\begin{align}
I_{\omega}(\phi_{\omega},\psi_{\omega})>I_{\omega}(u_n, v_n) \searrow I_{\omega}^c,\quad K(u_n ,v_n)>0,\quad \text{and}\quad S_{\geq 0}(u_n, v_n)=+\infty. \label{assumption of EC}
\end{align}
Applying Proposition \ref{PS}, passing to a subsequence if necessary, there exists $\{(t_n, x_n)\}$, $\Psi$, and $\{W_n\}$ which have stated properties.
Let $(a,b)$ be the nonlinear profile assoiated to $\Psi$ and $t_n$ and suppose 
\begin{align*}
\|(a,b)\|_{L_t^{6}(\R , L_x^3)} <+\infty.
\end{align*}
Then we easily get 
\begin{align*}
\|(u_n , v_n)(0) - (a,b)(-t_n , -x_n)\|_{H^1} \rightarrow 0.
\end{align*}
By the stability result, this implies $S(u_n, v_n) <+\infty$ for sufficiently large $n\in \mathbb{N}$.
\footnote{Note that $S(a,b)(\cdot -t_n , \cdot -x_n)\leq \|(a,b)\|_{L_t^{6}(\R , L_x^3)}<+\infty$.}
This contradicts \eqref{assumption of EC}, and so we obtain that
\begin{align*}
\|(a,b)\|_{L_t^{6}(\R , L_x^3)}=+\infty.
\end{align*}
Then following two possibilities exsist:
\begin{itemize}
\item[I.]\quad  $\|(a,b)\|_{L_t^{6}([0,\infty) , L_x^3)}=+\infty$,
\item[II.]\quad  $\|(\bar{a},\bar{b})(-t,x)\|_{L_t^{6}([0,\infty) , L_x^3)}=+\infty.$
\end{itemize}
Then we easily get the result.
\end{proof}

Next, we show the last part of Proposition \ref{EC_0}. 

\begin{proposition}[Almost periodicity modulo translation]\label{almost periodic}
Let $(u^c ,v^c)$ be the critical element which is constructed in Proposition \ref{EC}. Then there exists $x\in C([0,\infty), \mathbb{R}^5)$ such that
 $\{(u^c,v^c)(t, \cdot +x(t)) \mid t\in [0,\infty)\}$ is precompact in $H^1 \times H^1$.
\end{proposition}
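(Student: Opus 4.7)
The plan is to apply the Palais--Smale result (Proposition~\ref{PS}) to suitable time-translates of the critical element. Fix an arbitrary sequence $\{t_n\}\subset[0,\infty)$ and set $(u_n,v_n)(t):=(u^c,v^c)(t+t_n)$. By conservation of mass and energy one has $I_\omega(u_n,v_n)=I_\omega^c$; Lemma~\ref{est of K} gives $K(u_n,v_n)(0)>0$; and since $S(u^c,v^c)=+\infty$ on $[0,\infty)$ while local well-posedness gives $\|(u^c,v^c)\|_{L^6_t L^3_x([0,t_n])}<\infty$, we obtain $S(u_n,v_n)=\|(u^c,v^c)\|_{L^6_t([t_n,\infty),L^3_x)}=+\infty$ for every $n$. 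The hypotheses of Proposition~\ref{PS} are therefore satisfied.

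Applying Proposition~\ref{PS}, after passing to a subsequence we obtain $\Psi\in H^1\times H^1$, $\{x_n\}\subset\R^5$, $\{s_n\}\subset[0,\infty)$ with $s_n\equiv 0$ or $s_n\to\infty$, and $\{W_n\}$ with $\|W_n\|_{H^1}\to 0$ such that
\[
(u^c,v^c)(t_n)=U_\kappa(-s_n)\Psi(\cdot-x_n)+W_n.
\]
The case $s_n\to\infty$ must be ruled out. Let $(a,b)$ be the maximal-lifespan solution furnished by Lemma~\ref{FS}. Using conservation of $I_\omega$ along $(a,b)$ together with the dispersive decay of the nonlinear energy of a free solution, one computes $I_\omega(a,b)=\tfrac{1}{2}L(\Psi)+\tfrac{\omega}{2}M(\Psi)=I_\omega^c<\mu_\omega$; the Corollary following Lemma~\ref{positivityK} then yields that $(a,b)$ is globally defined with $K(a,b)>0$. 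Because $(a,b)$ scatters at $-\infty$ to a free evolution, a Strichartz estimate at $-\infty$ (Proposition~\ref{str1}) gives $\|(a,b)\|_{L^6_t L^3_x((-\infty,-T])}\to 0$ as $T\to\infty$, and an $L^6 L^3$-continuation argument combined with the stability proposition, applied with approximate solution $(a,b)(\cdot-s_n,\cdot-x_n)$ and error $\|(u^c,v^c)(t_n)-(a,b)(-s_n,\cdot-x_n)\|_{H^1}\to 0$, produces $S(u_n,v_n)<+\infty$ for $n$ large, a contradiction. Hence $s_n\equiv 0$ and
\[
(u^c,v^c)(t_n,\,\cdot+x_n)\ \longrightarrow\ \Psi\quad\text{in }H^1\times H^1,
\]
proving precompactness of the translated orbit.

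Finally, to produce a continuous selection $x\in C([0,\infty),\R^5)$, the precompactness above supplies, for every $\eta>0$, a radius $R(\eta)>0$ such that at each $t$ some $y(t)\in\R^5$ captures the bulk of $\|(u^c,v^c)(t)\|_{H^1}^2$ inside the ball of radius $R(\eta)$ about $y(t)$. The $H^1\times H^1$-continuity of the nonlinear flow forces any two such admissible centers at nearby times to differ by a uniformly bounded amount, and a standard piecewise-affine interpolation (as in \cite{KeMe06,FaXiCa11}) then yields a continuous $x$. The main obstacle throughout is the exclusion of $s_n\to\infty$, which relies on the interplay between the backward scattering of $(a,b)$, the global theory below $\mu_\omega$ provided by the Corollary after Lemma~\ref{positivityK}, and the stability proposition; the construction of the continuous selection, once precompactness is in hand, is routine.
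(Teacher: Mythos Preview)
Your argument has a genuine gap in the step that excludes $s_n\to\infty$. You propose to apply the stability proposition with approximate solution $(a,b)(\cdot-s_n,\cdot-x_n)$ on $[0,\infty)$ in order to conclude $S(u_n,v_n)<\infty$. For the stability result to apply you need
\[
\|(a,b)(\cdot-s_n,\cdot-x_n)\|_{L^6_t([0,\infty),L^3_x)}=\|(a,b)\|_{L^6_t([-s_n,\infty),L^3_x)}\le A
\]
for some finite $A$. The backward scattering of $(a,b)$ that you establish only controls $\|(a,b)\|_{L^6_t((-\infty,-T],L^3_x)}$; it says nothing about the forward tail $[-s_n,\infty)$. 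And you cannot appeal to the definition of $I_\omega^c$ to force forward scattering of $(a,b)$, because (as you correctly compute) $I_\omega(a,b)=\tfrac12 L(\Psi)+\tfrac\omega2 M(\Psi)=I_\omega^c$ lands exactly at the critical level, not strictly below it. So the stability input you need is simply unavailable, and the contradiction does not follow.

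The paper closes this gap by a different mechanism. Instead of pushing forward with the nonlinear profile, it uses the time-reversal symmetry of \eqref{NLS}: taking complex conjugates one has $(\overline{u^c},\overline{v^c})(t_n)=U_\kappa(s_n)\overline{\Psi}(\cdot-x_n)+\overline{W_n}$, so the \emph{free} forward evolution of the conjugate data has $L^6_tL^3_x$-norm $\|U_\kappa(\cdot+s_n)\overline{\Psi}\|_{L^6_t([0,\infty),L^3_x)}+o(1)\to 0$. Small-data scattering (Lemma~\ref{SDS}) then gives that the \emph{nonlinear} forward evolution of the conjugate, which is $(\overline{u^c},\overline{v^c})(-\,\cdot+t_n)$, has small $L^6_tL^3_x$-norm; equivalently, $\|(u^c,v^c)\|_{L^6_t([0,t_n],L^3_x)}\to 0$. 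This only yields a contradiction if $t_n\to\infty$, which is why the paper first proves the compactness statement along sequences $n_k\to\infty$ (Lemma~\ref{compactness lemma}) and then upgrades to arbitrary times via the two Claims and the piecewise-linear interpolation in the proof of Proposition~\ref{almost periodic}. Your write-up treats all sequences $\{t_n\}$ uniformly and never isolates the role of $t_n\to\infty$; you should either restrict to $t_n\to\infty$ (handling bounded sequences by continuity of the flow) and replace the forward-stability step by the time-reversal/small-data argument above, or follow the paper's two-stage construction.
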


Before prooving Proposition \ref{almost periodic}, we prepare the following lemma.

\begin{lemma}\label{compactness lemma}
Let $(u^c ,v^c)$ be the critical element which is constructed in Proposition \ref{EC} and let $\{n_k\} \subset \mathbb{N}$ satisfies $n_k \rightarrow \infty$ as $k\rightarrow \infty$. Then, 
passing to a subsequence if necessary, there exist $\{x_k\} \subset \R^5$ and $(u,v)\in H^1 \times H^1$ such that $(u^c,v^c)(n_k, \cdot -x_k) \rightarrow (u,v)$ in $H^1 \times H^1.$ 
\end{lemma}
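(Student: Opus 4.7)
The plan is to apply the Palais--Smale Proposition~\ref{PS} to the time-translated sequence of maximal-lifespan solutions $(u_k, v_k) := (u^c, v^c)(\cdot + n_k, \cdot)$. First I verify its three hypotheses: (i) $K(u_k, v_k)(0) = K(u^c, v^c)(n_k) > 0$ by Lemma~\ref{est of K} combined with $K(u^c_0, v^c_0) > 0$ from Proposition~\ref{EC}; (ii) $I_\omega(u_k, v_k) \equiv I_\omega^c$ by conservation of mass and energy; (iii) $S(u_k, v_k) = \|(u^c, v^c)\|_{L_t^6([n_k, \infty), L_x^3)} = +\infty$, because $S(u^c, v^c) = +\infty$ while local well-posedness and the uniform $H^1$-bound on the critical element give $\|(u^c, v^c)\|_{L_t^6([0, n_k], L_x^3)} < \infty$ on every bounded subinterval.

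Proposition~\ref{PS} then furnishes, after passing to a further subsequence, parameters $(t_k, x_k) \in [0, \infty) \times \R^5$, a nontrivial profile $\Psi \in H^1 \times H^1$, and a remainder with $\|W_k\|_{H^1 \times H^1} \to 0$, such that
\begin{align*}
(u^c, v^c)(n_k) = U_\kappa(-t_k)\Psi(\cdot - x_k) + W_k,
\end{align*}
with either $t_k \equiv 0$ or $t_k \to \infty$. In the first case, setting the lemma's translation to $y_k := -x_k$, the decomposition directly yields
\begin{align*}
(u^c, v^c)(n_k, \cdot - y_k) = \Psi + W_k(\cdot - y_k) \longrightarrow \Psi \quad \text{in } H^1 \times H^1,
\end{align*}
and the conclusion of the lemma holds with $(u, v) := \Psi$.

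The main obstacle is to rule out the case $t_k \to \infty$. In this regime, $U_\kappa(-t_k)\Psi$ disperses (its $L^3$ norm vanishes by the dispersive estimate while $\|U_\kappa(-t_k)\Psi\|_{H^1} = \|\Psi\|_{H^1} > 0$), so the decomposition converges only weakly in $H^1$ and no choice of translation can produce strong convergence to a nonzero $H^1$-limit. I would exclude this case by an argument paralleling the dichotomy at the end of the proof of Proposition~\ref{EC}: associate to $\Psi$ the nonlinear profile $(a, b)$ provided by the final state problem (Lemma~\ref{FS}), and consider the exact \eqref{NLS}-solution $(\tilde u_k, \tilde v_k)(t, x) := (a, b)(t - t_k, x - x_k)$, whose initial data obeys $\|(\tilde u_k, \tilde v_k)(0) - (u^c, v^c)(n_k)\|_{H^1 \times H^1} \to 0$. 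Using the scattering of $(a, b)$ to a linear flow at the appropriate infinity, together with the Strichartz estimates underlying its fixed-point construction, one controls $\|(\tilde u_k, \tilde v_k)\|_{L_t^6([0, \infty), L_x^3)}$ uniformly in $k$; the stability proposition then forces $\|(u_k, v_k)\|_{L_t^6([0, \infty), L_x^3)} < \infty$ for all large $k$, contradicting hypothesis (iii). The delicate technical point is precisely this uniform control of the nonlinear profile on the expanding interval $[-t_k, \infty)$, which requires careful handling of the sign conventions for $U_\kappa$ and the correct identification of the scattering direction in Lemma~\ref{FS}.
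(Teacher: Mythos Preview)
Your setup and the case $t_k \equiv 0$ match the paper exactly. The gap is in how you exclude $t_k \to \infty$. You propose to control $\|(\tilde u_k,\tilde v_k)\|_{L_t^6([0,\infty),L_x^3)} = \|(a,b)\|_{L_t^6([-t_k,\infty),L_x^3)}$ uniformly in $k$, invoking the scattering of $(a,b)$ ``at the appropriate infinity.'' But the nonlinear profile $(a,b)$ furnished by Lemma~\ref{FS} scatters only as $t\to -\infty$; its forward behavior is uncontrolled. In fact, since $\Psi$ is the unique profile in the decomposition, one has $I_\omega(a,b)=I_\omega^c$, so $(a,b)$ is itself at the critical level and need not scatter forward---it could be (a translate of) the very critical element you are analyzing. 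Hence there is no reason $\|(a,b)\|_{L_t^6([-t_k,\infty),L_x^3)}$ stays bounded as $t_k\to\infty$, and the stability argument in the forward direction does not close. The ``delicate technical point'' you flag is not merely delicate; it is where the argument breaks.

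The paper avoids this by working \emph{backward} in time, exploiting the time-reversal symmetry $(u,v)\mapsto(\bar u,\bar v)(-t)$. Conjugating the decomposition gives $(\bar u^c,\bar v^c)(n_k)=U_\kappa(t_k)\bar\Psi(\cdot-x_k)+\bar W_k$, so
\[
\|U_\kappa(t)(\bar u^c,\bar v^c)(n_k)\|_{L_t^6([0,\infty),L_x^3)} \leq \|U_\kappa(\cdot+t_k)\bar\Psi\|_{L_t^6([0,\infty),L_x^3)}+\|U_\kappa(t)\bar W_k\|_{L_t^6([0,\infty),L_x^3)}\to 0,
\]
because the first term is a vanishing tail of a fixed Strichartz-bounded function. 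Small data scattering (Lemma~\ref{SDS}) then forces the \emph{time-reversed} nonlinear flow to be small, i.e.\ $\|(u^c,v^c)\|_{L_t^6((-\infty,n_k],L_x^3)}\to 0$; since $n_k\to\infty$ this yields $(u^c,v^c)\equiv 0$, a contradiction. Your argument is easily repaired along the same lines: simply apply stability on $(-\infty,0]$ rather than $[0,\infty)$, using that $\|(a,b)\|_{L_t^6((-\infty,-t_k],L_x^3)}\to 0$ by the backward scattering you already have.
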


\begin{proof}
Set $(u_k,v_k) :=(u^c ,v^c)(\cdot+n_k)$ and applying Proposition \ref{PS}, then passing to a subsequence if necessary, there exist $\Psi \in H^1 \times H^1$, 
$\{W_n\}\subset H^1 \times H^1$, $\{(t_n,x_n)\}\subset [0,\infty)\times \mathbb{R}^5$ such that
\begin{align*}
&(u_k ,v_k)(0) = U_{\kappa}(-t_k) \Psi (\cdot -x_k) + W_k, \quad k\geq 1,\\
&t_n \equiv 0 \quad  \text{or} \quad t_k \rightarrow \infty \quad \text{as $k\rightarrow \infty$},\\
&\|W_k\|_{H^1} \rightarrow 0 \quad \text{as $k\rightarrow \infty$}.
\end{align*}
If we can prove $t_k \nrightarrow \infty$, the result follows clearly. So we suppose $t_k \rightarrow \infty$ and derive a contradiction. 
Here, by symmetry of free Schr\"{o}dinger equation, we obtain that
\begin{align*}
 (\bar{u}_c , \bar{v}_{c})(n_k) = U_{\kappa}(t_k)\bar{\Psi} (\cdot - x_k) + \bar{W}_k,
\end{align*} 
and so
\begin{align*}
\|U_{\kappa}(t)(\bar{u}_c , \bar{v}_c)(n_k)\|_{L_t^6 ([0,\infty) , L_x^3)} \leq \|U_{\kappa}(t+t_k)\bar{\Psi}\|_{L_t^6 ([0,\infty) , L_x^3)}+ \|U_{\kappa}(t)\bar{W}_k\|_{L_t^6 ([0,\infty) , L_x^3)} \rightarrow 0,
\end{align*}
as $k\rightarrow \infty.$ Combining this with the stability result and symmetry of \eqref{NLS}, we get
\begin{align*}
\lim_{k\rightarrow \infty}\| (\bar{u}_c , \bar{v}_c)(-t +n_k)\|_{L_t^6 ([0,\infty) , L_x^3)}=0.
\end{align*}
By the assumption $n_k \rightarrow \infty$, this implies $\|(u^c ,v^c)\|_{L^6_t (\R , L_x^3)} =0$ and so we get a contradiction.
\end{proof}

\begin{proof}[Proof of Proposition \ref{almost periodic}]
First we prove the following claim.
\begin{claim}
There exist $R>0$  and $\{y_n\}\subset \R^5$ such that
\begin{align*}
\int_{|x|<R} |(u^c , v^c)(n,\cdot -y_n)|^2\, dx \geq \frac{3}{4} M(u^c , v^c).
\end{align*}
\end{claim}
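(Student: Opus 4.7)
My plan is to prove the claim by contradiction, using Lemma \ref{compactness lemma} as the key compactness input. Suppose the conclusion fails. Then for every integer $k \geq 1$ there exists $n_k \in \mathbb{N}$ such that
\begin{align*}
	\sup_{y \in \R^5} \int_{|x|<k} |(u^c, v^c)(n_k, x - y)|^2\, dx < \frac{3}{4} M(u^c, v^c).
\end{align*}

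First I would verify that, after passing to a subsequence, $n_k \to \infty$. Indeed, if $\{n_k\}$ admitted a bounded subsequence, then a further subsequence would be constant equal to some $n_0$, and $(u^c, v^c)(n_0) \in L^2 \times L^2$ would force $\int_{|x|<k} |(u^c, v^c)(n_0)|^2\, dx \to M(u^c, v^c)$ as $k \to \infty$, contradicting the displayed inequality (with $y=0$) for all large $k$.

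Next, I would apply Lemma \ref{compactness lemma} to $\{n_k\}$ to obtain, after passing to a further subsequence, translations $\{x_k\} \subset \R^5$ and $(u, v) \in H^1 \times H^1$ with $(u^c, v^c)(n_k, \cdot - x_k) \to (u, v)$ strongly in $H^1 \times H^1$, hence in $L^2 \times L^2$. By mass conservation and the continuity of the $L^2$-norm under strong convergence, $M(u, v) = M(u^c, v^c)$. Choose $R_0 > 0$ so large that $\int_{|x| < R_0} |(u, v)|^2\, dx > \tfrac{7}{8} M(u^c, v^c)$. Strong $L^2$-convergence then yields
\begin{align*}
	\int_{|x|<R_0} |(u^c, v^c)(n_k, x - x_k)|^2\, dx > \frac{3}{4} M(u^c, v^c)
\end{align*}
for all sufficiently large $k$. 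Picking $k$ so large that also $k \geq R_0$ and taking $y = x_k$, we deduce $\sup_y \int_{|x|<k} |(u^c, v^c)(n_k, x - y)|^2\, dx > \tfrac{3}{4} M(u^c, v^c)$, contradicting our choice of $n_k$.

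The argument is essentially mechanical once Lemma \ref{compactness lemma} is in hand; the main subtlety I would have to be careful about is the quantifier structure of the negation, together with the reduction to the case $n_k \to \infty$ so that the compactness lemma can actually be invoked. There is no analytic difficulty beyond the tightness of the limiting $L^2$-function $(u,v)$ and standard properties of strong $L^2$-convergence.
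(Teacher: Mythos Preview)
Your proof is correct and follows essentially the same route as the paper's: negate the statement to obtain a sequence $\{n_k\}$ with radii $R_k\to\infty$ along which the supremum over translations stays below $\tfrac{3}{4}M$, dispose of the case where $\{n_k\}$ is bounded by direct $L^2$-tightness at a fixed time, and in the unbounded case invoke Lemma~\ref{compactness lemma} together with mass conservation to produce a limiting profile whose $L^2$-concentration on a fixed ball yields the contradiction. The only cosmetic differences are that the paper allows a general sequence $R_k\to\infty$ while you take $R_k=k$, and the paper chooses the intermediate threshold $\tfrac{3}{4}$ rather than your $\tfrac{7}{8}$; neither affects the argument.
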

\begin{proof}
If not, there exist $\{R_k \}\subset \R_{>0}$ and $\{n_k\} \subset \mathbb{N}$ such that
\begin{align}
R_k \rightarrow \infty \quad  \text{and} \quad \sup_{y\in \R^5} \int_{|x|<R_k} |(u^c ,v^c)(n_k , \cdot -y)|^2 \, dx \leq \frac{3}{4}M(u^c,v^c).\label{claim1}
\end{align}
If $\{n_k\}$ is bounded, then $\{(u^c, v^c) (n_k)\mid k\in \mathbb{N}\}$ is finite subset of $H^1 \times H^1$. Furthermore the mass $M(u^c ,v^c)$ does not depend on $n_k$.
So we obtain that
\begin{align*}
\int_{|x|<A} |(u^c ,v^c)(n_k) |^2 \, dx >\frac{3}{4}M(u^c,v^c)
\end{align*}
for sufficiently large $A>0$. This is a contradiction. Therefore we may assume $n_k \rightarrow \infty$ as $k\rightarrow \infty.$ Applying Lemma \ref{compactness lemma}, passing to a subsequence if necessary, there exist $\{x_k\} \subset \R$ and $(u,v) \in H^1 \times H^1$ such that
\begin{align*}
(u^c, v^c )(n_k , \cdot -x_k) \rightarrow (u,v)\quad \text{in $H^1 \times H^1$ as $k\rightarrow \infty$}.
\end{align*}
Noting that $M(u^c ,v^c)= M(u,v)$, there exists $\rho >0$ such that
\begin{align*}
\int_{|x| < \rho} |(u,v )|^2 \, dx > \frac{3}{4} M(u^c , v^c).
\end{align*}
Then we get 
\begin{align*}
\int_{|x| < \rho } |(u^c , v^c )(n_k \cdot -x_k)|^2 \, dx > \frac{3}{4} M(u^c , v^c)
\end{align*}
for sufficiently large $k$. 
\end{proof}

Next, we prove that
\begin{claim}\label{claim2}
$\{(u^c ,v^c)(n, \cdot -y_n)\mid n\in \mathbb{N}\}$ is precompact in $H^1 \times H^1$.
\end{claim}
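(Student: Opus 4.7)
The plan is to prove precompactness of $\{(u^c,v^c)(n,\cdot-y_n):n\in\N\}$ by extracting from any subsequence a strongly convergent one, using the preceding claim's mass tightness together with Lemma \ref{compactness lemma} to pin down the translation parameter.

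Fix an arbitrary subsequence $\{n_k\}$. First I would apply Lemma \ref{compactness lemma} to obtain, after passing to a further subsequence, $\{x_k\}\subset\R^5$ and $(u,v)\in H^1\times H^1$ with $(u^c,v^c)(n_k,\cdot-x_k)\to (u,v)$ strongly in $H^1\times H^1$. Mass conservation together with strong $L^2$ convergence gives $M(u,v)=M(u^c,v^c)$, so there exists $\rho>0$ with
\begin{align*}
\int_{|x|<\rho}|(u,v)|^2\,dx>\frac{1}{2}M(u^c,v^c).
\end{align*}

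The key step is to show that $\{y_{n_k}-x_k\}$ is bounded. Suppose to the contrary that along a further subsequence $|y_{n_k}-x_k|\to\infty$. For sufficiently large $k$, strong $L^2$ convergence yields
\begin{align*}
\int_{|x+x_k|<\rho}|(u^c,v^c)(n_k,x)|^2\,dx>\frac{1}{2}M(u^c,v^c),
\end{align*}
while the preceding claim gives
\begin{align*}
\int_{|x+y_{n_k}|<R}|(u^c,v^c)(n_k,x)|^2\,dx\geq \frac{3}{4}M(u^c,v^c).
\end{align*}
Once $|y_{n_k}-x_k|>R+\rho$ the two balls are disjoint, so summing the two contributions produces a total $L^2$ mass exceeding $M(u^c,v^c)$, contradicting mass conservation.

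Having established boundedness, pass to a further subsequence with $y_{n_k}-x_k\to y\in\R^5$. Then the triangle inequality yields
\begin{align*}
\|(u^c,v^c)(n_k,\cdot-y_{n_k})-(u,v)(\cdot-y)\|_{H^1}
&\leq \|(u^c,v^c)(n_k,\cdot-x_k)-(u,v)\|_{H^1}
\\
&\quad+\|(u,v)(\cdot-(y_{n_k}-x_k))-(u,v)(\cdot-y)\|_{H^1}.
\end{align*}
The first term vanishes by construction, and the second by the strong continuity of translation in $H^1$. The sole substantive obstacle is the boundedness of $\{y_{n_k}-x_k\}$, which the mass-splitting argument above resolves; once that is in hand, the precompactness follows directly.
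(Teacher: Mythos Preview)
Your proof is correct and follows essentially the same route as the paper's own argument: apply Lemma~\ref{compactness lemma} to obtain a convergent translated subsequence with parameters $x_k$, then use the mass concentration from the preceding claim to force $|y_{n_k}-x_k|\le R+\rho$ (the paper simply asserts this bound, while you spell out the mass-splitting contradiction), and conclude by continuity of translation. The only cosmetic difference is that the paper uses $\tfrac{3}{4}M$ for both balls whereas you use $\tfrac{1}{2}M$ for the limit ball; either choice works since the sum exceeds $M$.
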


\begin{proof}
To prove this, we take a sequence $\{n_k\} \subset \mathbb{N}$ such that $n_k \rightarrow \infty$ arbitrarily. By Lemma \ref{compactness lemma}, there exist $\{x_k\} \subset \R^5$ and 
$(u,v)\in H^1 \times H^1$ such that
\begin{align}\label{claim2-1}
(u^c, v^c )(n_k , \cdot -x_k) \rightarrow (u,v)\quad \text{in $H^1 \times H^1$ as $k\rightarrow \infty$}.
\end{align}
Noting that $M(u^c ,v^c)= M(u,v)$, there exists $\rho >0$ such that
\begin{align*}
\int_{|x| < \rho} |(u,v )|^2 \, dx > \frac{3}{4} M(u^c , v^c).
\end{align*}
Then we get 
\begin{align*}
\int_{|x| < \rho } |(u^c , v^c )(n_k \cdot -x_k)|^2 \, dx > \frac{3}{4} M(u^c , v^c)
\end{align*}
Then we easily obtain that $|y_{n_k} -x_k| \leq R+\rho$. Combining this with \eqref{claim2-1}, we get the result.
\end{proof}
Finally we give the proof of Proposition \ref{almost periodic}. Since $\{(u^c ,v^c)(n, \cdot -y_n)\mid n\in \mathbb{N}\}$ is precompact, we easily get
$\{(u^c ,v^c)(n+1, \cdot -y_n) \mid n\in \mathbb{N}\}$ is also precompact. In other wards $\{(u^c, v^c )(n, \cdot -y_{n-1}) \mid n\geq 2\}$ is precompact.
Then by the same argument in the proof of Claim \ref{claim2}, we obtain 
\begin{align*}
R:= \sup_{n\geq 2} |y_n -y_{n-1}| <\infty.
\end{align*}
Note that
\begin{align*}
E:= \{(u^c ,v^c) (n+t , \cdot - y_n +y) \mid n\geq 1, t\in [0,1], |y| \leq R\}
\end{align*}
is precompact. Connecting $\{y_n\}$ with a line and denoting it by $-x \in C([0,\infty), \R^5)$, we easily get
\begin{align*}
\{(u^c ,v^c)(t, \cdot + x(t)) \mid t\in [0,\infty)\}\subset E.
\end{align*}
This implies the result.
\end{proof}

To apply the compactness, the following formation is useful. 
\begin{lemma}[e.g. \cite{Ham18p}]
\label{Precompactness of the flow implies uniform localization}
Let $(u,v)$ be the solution to \eqref{NLS} such that
\[
K=\{(u(t,\cdot+x(t)),v(t,\cdot+x(t))):t\in [0,\infty)\}
\]
is precompact in $H^1\!\times\!H^1$. Then for each $\varepsilon>0$, there exists $R>0$ so that
\[
\int_{|x-x(t)|>R}\left(|u(x,t)|^2+|v(x,t)|^2+|\nabla u(x,t)|^2+|\nabla v(x,t)|^2+|v(x,t)u(x,t)^2|\right)dx\leq\varepsilon
\]
for all $0\leq t<\infty$.
\end{lemma}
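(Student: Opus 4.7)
The plan is to reduce the uniform localization statement to the fact that a precompact, equivalently totally bounded, subset of $H^1\!\times\!H^1$ consists of functions whose $H^1$-tails can be made arbitrarily small in a uniform way. Fix $\varepsilon>0$, and denote $K := \{(u(t,\cdot+x(t)),v(t,\cdot+x(t))):t\geq 0\}$, which is precompact in $H^1\!\times\!H^1$ by hypothesis. For a parameter $\delta>0$ to be chosen later, extract a finite $\delta$-net $\{(\phi_j,\psi_j)\}_{j=1}^{N}\subset H^1\!\times\!H^1$ for $K$, so that for every $t\geq 0$ there exists $j=j(t)$ with $\|(u(t,\cdot+x(t))-\phi_j,\,v(t,\cdot+x(t))-\psi_j)\|_{H^1\times H^1}<\delta$.

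Next, since $C_c^{\infty}(\R^5)$ is dense in $H^1(\R^5)$, every $(\phi_j,\psi_j)$ has arbitrarily small $H^1$-tail outside a large ball: choose $R_j>0$ so that
\[
\int_{|y|>R_j/2}\bigl(|\phi_j|^2+|\nabla\phi_j|^2+|\psi_j|^2+|\nabla\psi_j|^2\bigr)dy<\varepsilon/8,
\]
and set $R := \max_{1\leq j\leq N}R_j$. Changing variables $y=x-x(t)$ and applying the triangle inequality against the nearest net element, we get
\[
\int_{|x-x(t)|>R}\bigl(|u|^2+|\nabla u|^2+|v|^2+|\nabla v|^2\bigr)dx\leq \varepsilon/4 + C\delta^2,
\]
which is at most $\varepsilon/2$ provided $\delta$ is chosen small enough in terms of $\varepsilon$.

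For the nonlinear term $\int_{|x-x(t)|>R}|vu^2|\,dx$, I would introduce a smooth cutoff $\chi_R$ supported in $\{|y|>R/2\}$, equal to $1$ on $\{|y|>R\}$, with $|\nabla\chi_R|\lesssim 1/R$. Hölder's inequality gives
\[
\int_{|y|>R}|vu^2|\,dy\leq \|\chi_R v\|_{L^3}\|\chi_R u\|_{L^3}^2,
\]
and the Sobolev embedding $H^1(\R^5)\hookrightarrow L^3(\R^5)$ (valid since $3<2^{\ast}=10/3$) bounds this by $\|\chi_R v\|_{H^1}\|\chi_R u\|_{H^1}^2$. Using the $\delta$-net once more, this is controlled, up to $O(\delta)$, by $\|\chi_R\psi_j\|_{H^1}\|\chi_R\phi_j\|_{H^1}^2$, which in turn is dominated by the $H^1$-tail of $(\phi_j,\psi_j)$ on $\{|y|>R/2\}$ chosen in the previous step. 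Taking $\delta$ and hence the error terms small enough yields the bound $\varepsilon/2$ for the nonlinear contribution, completing the estimate.

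The only non-routine point is the nonlinear term; the main obstacle is the usual cutoff bookkeeping, namely checking that $\|\chi_R\phi_j\|_{H^1}$ is controlled by $\|\phi_j\|_{H^1(|y|>R/2)}+\|\phi_j\|_{L^2(R/2<|y|<R)}/R$, and this is a one-line calculation from the product rule. Everything else—total boundedness, density, and the Sobolev embedding—is standard.
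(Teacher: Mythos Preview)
The paper does not supply its own proof of this lemma; it is stated with a citation to \cite{Ham18p} and then used as a black box in the rigidity argument. Your argument is correct and is the standard one for uniform-tail statements: total boundedness gives a finite $\delta$-net, each net element has small $H^1$-tail outside a large ball, and the triangle inequality transfers this to every point of $K$.

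Two small remarks on the write-up. First, watch the order of quantifiers: you choose $\delta$, then the net, then $R=R(\delta)$, and finally verify that the $O(\delta)$ error terms are small enough. For this to close, the constants in front of $\delta$ must not depend on the particular net. Since the net elements may be taken in $\overline{K}$, those constants depend only on $\sup_{\overline{K}}\|(\phi,\psi)\|_{H^1}<\infty$, so there is no circularity. Second, for the cubic term you can avoid the cutoff bookkeeping entirely by interpolating
\[
\|f\|_{L^3(|y|>R)}\leq \|f\|_{L^2(|y|>R)}^{1/6}\,\|f\|_{L^{10/3}(\R^5)}^{5/6}\lesssim \|f\|_{L^2(|y|>R)}^{1/6}\,\|f\|_{H^1(\R^5)}^{5/6},
\]
and using the uniform $H^1$ bound on $K$; this gives the same conclusion with one less moving part.
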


\subsection{Extinction of the critical element with zero momentum}

In this section, we show the following rigidity proposition.

\begin{proposition}[Rigidity]\label{rigidity}
Let $(u_0,v_0)\in H^1 \times H^1$ and $(u,v)$ be the time-global solution to (NLS) with initial data $(u_0,v_0)$. Suppose
\[
I_\omega(u_0,v_0)<\mu_\omega, \quad K(u_0,v_0)\geq0, \quad P(u_0,v_0)=0,
\]
and there exists a continuous path $x(t)$ such that
\[
K=\{(u(t,\cdot+x(t)),v(t,\cdot+x(t))):t\in [0,\infty)\}
\]
is precompact in $H^1\times H^1$. Then, $(u_0,v_0)=(0,0)$.
\end{proposition}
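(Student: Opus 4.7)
Assume for contradiction that $(u_0, v_0) \neq (0, 0)$. The variational characterization $\mu_\omega = \inf\{I_\omega(f, g) : K(f, g) = 0\}$ combined with $I_\omega(u_0, v_0) < \mu_\omega$ forces $K(u_0, v_0) > 0$ strictly, since otherwise $(u_0, v_0)$ would give $I_\omega(u_0, v_0) \geq \mu_\omega$. Lemma \ref{est of K} then propagates positivity: $K(u(t), v(t)) > 0$ for every $t \geq 0$. Precompactness of the translated orbit together with the conserved positive mass $M(u, v) = M(u_0, v_0) > 0$ yields a uniform lower bound $L(u(t), v(t)) \geq L_0 > 0$: indeed, a sequence $t_n$ with $L(u(t_n), v(t_n)) \to 0$ would produce, after translation by $x(t_n)$ and extraction, an $H^1 \times H^1$-limit with vanishing gradient, hence identically zero, contradicting mass conservation. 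Lemma \ref{positivityK} then gives a uniform coercivity bound
\[
K(u(t), v(t)) \geq c_0 := \min\!\Bigl(\tfrac{1}{8}\bigl(\mu_\omega - I_\omega(u_0, v_0)\bigr),\, \delta L_0\Bigr) > 0 \quad \text{for all } t \geq 0.
\]

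Next, introduce the boosted center
\[
X(t) := \frac{2}{M(u_0, v_0)} \int_0^t \mathrm{Im} \int_{\R^5} \bigl(\bar u \nabla u + \kappa \bar v \nabla v\bigr)\,dx\,ds,
\]
which satisfies $|X'(t)| \leq C$ uniformly by conservation of mass and energy. Fix a smooth radial cutoff $\phi : \R^5 \to \R$ with $\phi(y) = |y|^2/2$ for $|y| \leq 1$ and $\phi$ constant for $|y| \geq 2$, and set $\phi_R(y) := R^2 \phi(y/R)$. I then consider the localized, boosted virial
\[
V_R(t) := \mathrm{Im} \int_{\R^5} \nabla \phi_R\bigl(x - X(t)\bigr) \cdot \bigl(\bar u \nabla u + \tfrac{1}{2}\bar v \nabla v\bigr)\,dx.
\]
Since $|\nabla \phi_R| \leq C R$, Cauchy--Schwarz gives $|V_R(t)| \leq C R$ for all $t$. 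A direct calculation mirroring the identity in the Introduction produces
\[
\frac{d}{dt} V_R(t) = 2 K(u(t), v(t)) - X'(t) \cdot P(u_0, v_0) + \mathcal{E}_R(t),
\]
where $\mathcal{E}_R(t)$ collects the contributions of $\nabla^2 \phi_R - I$, $\Delta^2 \phi_R$, and $\nabla \phi_R - (x - X(t))$ acting on the nonlinear term, all supported on $\{|x - X(t)| > R\}$.

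The momentum hypothesis $P(u_0, v_0) = 0$, combined with conservation of $P$, annihilates the middle term. To control $\mathcal{E}_R$, fix $\varepsilon > 0$ and use Lemma \ref{Precompactness of the flow implies uniform localization} to choose $R_0 = R_0(\varepsilon)$ with
\[
\int_{|x - x(t)| > R_0} \bigl(|u|^2 + |v|^2 + |\nabla u|^2 + |\nabla v|^2 + |\bar v u^2|\bigr)\,dx \leq \varepsilon \quad \text{for all } t \geq 0,
\]
and invoke the decay estimate $|x(t) - X(t)|/t \to 0$ as $t \to \infty$, which follows from precompactness by combining a truncated second-moment identity with $\partial_t \int x(|u|^2 + |v|^2)\,dx = M\,X'(t)$ tested against cutoffs. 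Fix $\eta \in (0, c_0 / (2C))$, choose $T$ so large that $|x(t) - X(t)| \leq \eta t$ on $[0, T]$, and set $R := R_0 + \eta T$. Then $\{|x - X(t)| > R\} \subset \{|x - x(t)| > R_0\}$ throughout $[0, T]$, so $|\mathcal{E}_R(t)| \leq C\varepsilon$ uniformly. Integration over $[0, T]$ yields
\[
2 c_0 T \leq \int_0^T 2 K\,dt \leq V_R(T) - V_R(0) + C\varepsilon T \leq 2 C R_0 + (2 C \eta + C \varepsilon)\,T.
\]
Dividing by $T$ and sending $T \to \infty$ gives $2 c_0 \leq 2 C \eta + C \varepsilon$, which fails once $\eta$ and $\varepsilon$ are taken small enough; this is the desired contradiction.

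The principal obstacle is exactly the term $X'(t) \cdot P(u_0, v_0)$: because $\kappa \neq 1/2$ destroys Galilean invariance, the momentum of a precompact critical element cannot be cancelled by a Galilean boost, and without the standing hypothesis $P(u_0, v_0) = 0$ the sign of the right-hand side of the virial identity is indeterminate. A secondary technical subtlety is the justification of $|x(t) - X(t)| = o(t)$: a generic $H^1 \times H^1$-solution need not carry a finite first moment, so the natural center-of-mass identity must be localized by an additional cutoff, with the tail controlled through Lemma \ref{Precompactness of the flow implies uniform localization}.
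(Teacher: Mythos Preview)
Your argument is correct and follows essentially the same route as the paper: the localized, boosted virial centered at $X(t)$, the use of $P=0$ to annihilate the sign-indefinite term $X'(t)\cdot P$, the decay $|x(t)-X(t)|=o(t)$ (which is the paper's Lemma~\ref{order of x(t)}), and the integration-to-contradiction step all match. One small slip: the assertion ``choose $T$ so large that $|x(t)-X(t)|\leq \eta t$ on $[0,T]$'' does not follow from $|x(t)-X(t)|/t\to 0$, which only yields the bound for $t\geq T_0(\eta)$; the paper remedies this by integrating over $[T_0,T_1]$ rather than $[0,T]$, and you can do the same (or absorb $\sup_{[0,T_0]}|x(t)-X(t)|$, finite by continuity, into the constant $R_0$).
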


We investigate the behavior of $x(t)$.

\begin{lemma}\label{order of x(t)}
Let $(u,v)$ be a solution to (NLS) defined on $[0,\infty)$ so that $K=\{(u(\cdot+x(t),t),v(\cdot+x(t),t))\}$ is precompact in $H^1\!\times\!H^1$ for some continuous function $x(\cdot)$. Then
\begin{align*}
\frac{x(t)-X(t)}{t}\longrightarrow0\ \ \text{as}\ \ t\rightarrow\infty,
\end{align*}
where
\begin{align*}
X(t)=2\frac{\int_0^t\im \int_{\mathbb{R}^5}\overline{u}\nabla u+\kappa\overline{v}\nabla vdxds}{M(u,v)}.
\end{align*}
\end{lemma}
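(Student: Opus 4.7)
A direct computation from \eqref{NLS} (noting that $\im(v\bar u^2)+\im(u^2\bar v)=0$ since the two terms are complex conjugates) yields the local conservation law
\[
\partial_t(|u|^2+|v|^2)+\nabla\cdot\bigl(2\im(\bar u\nabla u)+2\kappa\,\im(\bar v\nabla v)\bigr)=0.
\]
If one were allowed to integrate $x_i$ against this identity, the center of mass $\bar x(t):=M(u,v)^{-1}\int x(|u|^2+|v|^2)\,dx$ would satisfy $\bar x'(t)=X'(t)$, hence $\bar x(t)=X(t)+\bar x(0)$ for all $t$. The strategy is to make this precise via a spatial cutoff, and then use precompactness to show that $x(t)$ differs from the truncated center of mass by $o(t)$.

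Fix $\chi\in C_c^\infty(\R^5)$ with $\chi\equiv 1$ on $\{|y|\leq 1\}$ and $\chi\equiv 0$ outside $\{|y|\leq 2\}$, and set $Z_R(t):=\int x\,\chi(x/R)(|u|^2+|v|^2)\,dx$ for $R>0$. The local conservation law and integration by parts give
\[
\tfrac{d}{dt}Z_R(t)=2\int\chi(x/R)\,\im(\bar u\nabla u+\kappa\bar v\nabla v)\,dx+E_R(t),
\]
where the remainder $E_R(t)$ is supported in the shell $\{R\leq|x|\leq 2R\}$ and involves $\nabla\chi(x/R)$. By Lemma~\ref{Precompactness of the flow implies uniform localization}, for each $\delta>0$ there is $\rho>0$ such that
\[
\int_{|x-x(t)|>\rho}\bigl(|u|^2+|v|^2+|\nabla u|^2+|\nabla v|^2\bigr)\,dx\leq\delta^2\qquad\text{uniformly in }t\geq 0.
\]
Provided $R\geq|x(t)|+\rho$, the supports of $E_R(t)$ and of $1-\chi(x/R)$ lie entirely in $\{|x-x(t)|>\rho\}$, so Cauchy--Schwarz yields $|E_R(t)|\lesssim\delta$ and $\bigl|\int(1-\chi(x/R))\im(\bar u\nabla u+\kappa\bar v\nabla v)\,dx\bigr|\lesssim\delta$. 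Integrating in time,
\[
Z_R(t)-Z_R(0)=M(u,v)\,X(t)+O(\delta t).
\]
On the other hand, splitting
\[
Z_R(t)=M(u,v)x(t)+\int(x-x(t))\chi(x/R)(|u|^2+|v|^2)\,dx+x(t)\!\int(\chi(x/R)-1)(|u|^2+|v|^2)\,dx
\]
and using uniform localization once more gives $Z_R(t)=M(u,v)\,x(t)+O(\rho+\delta R+\delta|x(t)|)$ for $R\geq|x(t)|+\rho$. Choosing $R=R(t):=|x(t)|+\rho$ and combining the two displays,
\[
M(u,v)\,|x(t)-X(t)|\leq C\bigl(\rho+\delta|x(t)|+\delta t\bigr).
\]
The $H^1$-bound on the orbit forces $|X'(t)|\lesssim 1$, hence $|X(t)|\lesssim t$, so $|x(t)|\leq|x(t)-X(t)|+Ct$. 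Absorbing the $C\delta|x(t)|$ term to the left (which is legitimate for $\delta$ sufficiently small), dividing by $t$, and letting first $t\to\infty$ and then $\delta\downarrow 0$ gives $\lim_{t\to\infty}|x(t)-X(t)|/t=0$.

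\textbf{Main obstacle.} The chief subtlety is the coupling between the truncation radius $R$ and the unknown size of $|x(t)|$: the truncation error from $E_R$ scales with $\delta$ but the static error $\int(x-x(t))\chi(x/R)\cdots$ scales with $\delta R$, and $R$ must dominate $|x(t)|$, so the final bound on $|x(t)-X(t)|$ feeds back into itself. The absorbing step above---using the triangle inequality $|x(t)|\leq|x(t)-X(t)|+|X(t)|$ together with the trivial bound $|X(t)|\lesssim t$---is what breaks this feedback loop. A secondary technical point is that $x(t)$ is only continuous, so the truncation must be performed in fixed spatial coordinates (as $\chi(x/R)$) rather than in a moving frame centered at $x(t)$, for otherwise the time derivative of $Z_R$ would involve the non-existent derivative $x'(t)$.
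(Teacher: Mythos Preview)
Your overall strategy---deriving the local mass conservation law, truncating the center of mass, and closing a feedback loop via the crude bound $|X(t)|\lesssim t$---is sound and differs from the paper's argument (which proceeds by contradiction and centers its cutoff at the smooth moving point $X(t)$ rather than at the origin). However, there is a genuine gap in your choice of truncation radius.

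To obtain $Z_R(t)-Z_R(0)=M\,X(t)+O(\delta t)$ you must have $|E_R(s)|\lesssim\delta$ and $\bigl|\int(1-\chi(x/R))\im(\bar u\nabla u+\kappa\bar v\nabla v)\,dx\bigr|\lesssim\delta$ for \emph{every} $s\in[0,t]$, and this requires $R\geq|x(s)|+\rho$ for all such $s$, i.e.\ $R\geq\sup_{0\leq s\leq t}|x(s)|+\rho$. Your choice $R=|x(t)|+\rho$ does not ensure this: at intermediate times where $|x(s)|>|x(t)|$, the shell $\{R\leq|x|\leq 2R\}$ may meet the bulk region $\{|x-x(s)|\leq\rho\}$, and then the errors are only $O(1)$, not $O(\delta)$. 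Consequently the integrated error over $[0,t]$ need not be $O(\delta t)$, and the absorbing step cannot be justified as written.

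The repair is straightforward but requires one more idea. Take $R=\sup_{0\leq s\leq t}|x(s)|+\rho$; then the time-integrated bound holds, but the static estimate now reads $Z_R(t')=M\,x(t')+O(\rho+\delta R)$ with $R$ involving the supremum, so the feedback loop must be closed on $\sup_{0\leq s\leq t}|x(s)-X(s)|$ rather than on $|x(t)-X(t)|$ alone. Since the same fixed $R$ works on every subinterval $[0,t']\subset[0,t]$, the bound $M|x(t')-X(t')|\leq C(\rho+\delta R+\delta t)$ holds uniformly in $t'\in[0,t]$; taking the supremum over $t'$ on the left, writing $R\leq\sup_{s\leq t}|x(s)-X(s)|+Ct+\rho$, and absorbing for $\delta$ small yields $\sup_{s\leq t}|x(s)-X(s)|\leq C'(\rho+\delta t)$, which gives the conclusion after dividing by $t$. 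The paper circumvents this issue by a different device: arguing by contradiction along a sequence $t_n$, it sets $\tau_n:=\inf\{t:|x(t)-X(t)|\geq|x(t_n)-X(t_n)|\}$, which automatically forces $|x(s)-X(s)|\leq|x(\tau_n)-X(\tau_n)|$ on $[0,\tau_n]$, so a single radius $R_n=|x(\tau_n)-X(\tau_n)|+\rho$ (for a cutoff centered at $X(s)$) suffices throughout.
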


\begin{proof}
We set $\widetilde{P}(t)=\widetilde{P}(u,v)(t)=\im \int_{\mathbb{R}^5}\overline{u}\nabla u+\kappa\overline{v}\nabla vdx$, and $M=M(u,v)$ for short. We assume that we do not have Lemma \ref{order of x(t)}. Then, there exist $\delta>0$ and a sequence $t_n\rightarrow\infty$ such that $|x(t_n)-X(t_n)|\geq\delta t_n$. Without loss of generality, we assume that $x(0)=0$. We set
\[
\tau_n=\inf\left\{t\geq0:\left|x(t)-X(t)\right|\geq\left|x(t_n)-X(t_n)\right|\right\}.
\]
Since $0<\tau_n\leq t_n$ and $|x(\tau_n)-X(\tau_n)|=|x(t_n)-X(t_n)|$, it follows that
\[
\tau_n\longrightarrow\infty\ \ \text{as}\ \ n\rightarrow\infty,
\]
\[
\left|x(t)-X(t)\right|<\left|x(t_n)-X(t_n)\right|\,,\ \ \ 0\leq t<\tau_n,
\]
\[
\left|x(\tau_n)-X(\tau_n)\right|\geq\delta\tau_n.
\]
Let $\chi_1 \in C_0^\infty(\mathbb{R}^5)$ be radial with
\begin{equation}
\notag \chi_1(r)=
\begin{cases}
\hspace{-0.4cm}&\displaystyle{\ \ \ \ 1\ \ \ \ \ \ \,(0\leq r\leq1),}\\
\hspace{-0.4cm}&\displaystyle{smooth\ \ (1\leq r\leq 2),}\\
\hspace{-0.4cm}&\displaystyle{\ \ \ \ 0\ \ \ \ \ \ \,(2\leq r),}
\end{cases}
\end{equation}
where $r=|x|$. Also, let $\chi_1$ satisfy $|\chi_1'(r)|\leq2\ \ (r\geq0)$. We define $\chi_R(r)= \chi_1(r/R)$ for $R>0$. 
We define
\[
z_R(t)=\int_{\mathbb{R}^5}\left(x-X(t)\right)\chi_R\left(\left|x-X(t)\right|\right)\left(|u(t,x)|^2+|v(t,x)|^2\right)dx
\]
for $R>0$. By a direct calculation, 
we have
\begin{align*}
z_R'(t)
	&=\frac{2\widetilde{P}(t)}{M}\int_{R\leq|x-X(t)|}\left\{1-\chi_R\left(\left|x-X(t)\right|\right)\right\}\left(|u(t,x)|^2+|v(t,x)|^2\right)dx \notag \\
	&\quad -\int_{R\leq|x-X(t)|\leq2R}\left(x-X(t)\right)\chi_R'\left(\left|x-X(t)\right|\right) \frac{\frac{2\widetilde{P}(t)}{M}\cdot \left(x-X(t)\right)}{\left|x-X(t)\right|}\left(|u(t,x)|^2+|v(t,x)|^2\right)dx \notag \\
	&\quad +2\text{Im}\int_{R\leq|x-X(t)|}\left\{\chi_R\left(\left|x-X(t)\right|\right)-1\right\}\left(\nabla u\overline{u}+\kappa\nabla v\overline{v}\right)dx \notag \\
	&\quad +2\text{Im}\int_{R\leq|x-X(t)|\leq2R}\frac{\left(x-X(t)\right)\chi_R'\left(\left|x-X(t)\right|\right)}{\left|x-X(t)\right|}\left(x-X(t)\right)\cdot\left(\nabla u\overline{u}+\kappa\nabla v\overline{v}\right)dx.
\end{align*}
Since we have $|\widetilde{P}(t)|\lesssim I_\omega(u,v)$,
it holds that
\begin{align*}
|z_R'(t)|
&\leq C(M,I_\omega(u,v)) \int_{R\leq |x-X(t)|}\left(|\nabla u|^2+|u|^2+|\nabla v|^2+|v|^2\right)dx.
\end{align*}
On the other hand, by Lemma \ref{Precompactness of the flow implies uniform localization}, there exists $\rho>0$ such that for any $0\leq t<\infty$,
\[
\int_{|x-x(t)|>\rho}\left(|\nabla u|^2+|u|^2+|\nabla v|^2+2|v|^2\right)dx\leq\frac{\delta M(u,v)}{10(1+\delta)} \times \min\left\{\frac{5}{C(M,I_\omega(u,v))},1\right\}.
\]
Let $R_n=|x(\tau_n)-X(\tau_n)|+\rho$. Since for given $0\leq t\leq\tau_n$ and $|x-X(t)|>R_n$,
\[
|x-x(t)|=\left|\left(x-X(t)\right)+\left(X(t)-x(t)\right)\right|\geq R_n-\left|x(t)-X(t)\right|\geq R_n-\left|x(\tau_n)-X(\tau_n)\right|=\rho,
\]
we obtain
\begin{align}
|z'_{R_n}(t)|\leq  C(M,I_\omega(u,v))\int_{|x-x(t)|>\rho}\left(|\nabla u|^2+|u|^2+|\nabla v|^2+|v|^2\right)dx\leq \frac{\delta M(u,v)}{2(1+\delta)}.\label{63}
\end{align}
for any $n\in\mathbb{N}$ and $0\leq t\leq\tau_n$. Also, since $R_n\geq\rho$ and $x(0)=0$,
\begin{align}
|z_{R_n}(0)|&\leq\int_{|x|<\rho}|x|\chi_{R_n}(|x|)\left(|u_0|^2+|v_0|^2\right)dx+\int_{|x|>\rho}|x|\chi_{R_n}(|x|)\left(|u_0|^2+|v_0|^2\right)dx \notag \\
&=\int_{|x|<\rho}|x|\left(|u_0|^2+|v_0|^2\right)dx+\int_{2R_n>|x-x(0)|>\rho}|x|\chi_{R_n}(|x|)\left(|u_0|^2+|v_0|^2\right)dx \notag \\
&\leq\rho M(u,v)+\frac{\delta M(u,v)}{5(1+\delta)}R_n.\label{64}
\end{align}
Next, we estimate $z_{R_n}(\tau_n)$.
\begin{align*}
z_{R_n}(\tau_n)
	&=\int_{|x-x(\tau_n)|>\rho}\left(x-X(\tau_n)\right)\chi_{R_n}\left(\left|x-X(\tau_n)\right|\right)\left(|u(\tau_n,x)|^2+|v(\tau_n,x)|^2\right)dx\\
	&\hspace{1.0cm}+\int_{|x-x(\tau_n)|<\rho}\left(x-X(\tau_n)\right)\chi_{R_n}\left(\left|x-X(\tau_n)\right|\right)\left(|u(\tau_n,x)|^2+|v(\tau_n,x)|^2\right)dx\\
	&=:I+I\!\!I.
\end{align*}
We have
\[
|I|\leq \frac{\delta M(u,v)}{5(1+\delta)}R_n.
\]
If $|x-x(\tau_n)|<\rho$, then we have $|x-X(\tau_n)|\leq|x-x(\tau_n)|+|x(\tau_n)-X(\tau_n)|\leq\rho+|x(\tau_n)-X(\tau_n)|=R_n$. Thus, $\chi_{R_n}(|x-X(\tau_n)|)=1$. Therefore, 
\begin{align*}
-I\!\!I
&=-\int_{|x-x(\tau_n)|<\rho}\left(x-X(\tau_n)\right)\left(|u(\tau_n,x)|^2+|v(\tau_n,x)|^2\right)dx\\
&=-\int_{|x-x(\tau_n)|<\rho}(x-x(\tau_n))\left(|u(\tau_n,x)|^2+|v(\tau_n,x)|^2\right)dx\\
&\hspace{3.5cm}+\left(X(\tau_n)-x(\tau_n)\right)\int_{|x-x(\tau_n)|<\rho}\left(|u(\tau_n,x)|^2+|v(\tau_n,x)|^2\right)dx\\
&=\left(X(\tau_n)-x(\tau_n)\right)M(u,v)-\int_{|x-x(\tau_n)|<\rho}(x-x(\tau_n))\left(|u(\tau_n,x)|^2+|v(\tau_n,x)|^2\right)dx\\
&\hspace{3.5cm}-\left(X(\tau_n)-x(\tau_n)\right)\int_{|x-x(\tau_n)|>\rho}\left(|u(\tau_n,x)|^2+|v(\tau_n,x)|^2\right)dx.
\end{align*}
Hence,
\[
|I\!\!I|\geq\left|X(\tau_n)-x(\tau_n)\right|M(u,v)-\rho M(u,v)-\frac{\delta M(u,v)}{10(1+\delta)}R_n.
\]
Therefore,
\begin{align}
|z_{R_n}(\tau_n)|\geq-|I|+|I\!\!I|\geq\left|X(\tau_n)-x(\tau_n)\right|M(u,v)-\rho M(u,v)-\frac{3\delta M(u,v)}{10(1+\delta)}R_n.\label{65}
\end{align}
Combining \eqref{63},\,\eqref{64}, and \eqref{65},
\begin{align*}
\frac{\delta M(u,v)}{2(1+\delta)}\tau_n&=\int_0^{\tau_n}\frac{\delta M(u,v)}{2(1+\delta)}dt\geq\int_0^{\tau_n}|z'_{R_n}(t)|dt\geq\left|\int_0^{\tau_n}z_{R_n}'(t)dt\right|\\
&\geq|z_{R_n}(\tau_n)|-|z_{R_n}(0)|\geq\left|X(\tau_n)-x(\tau_n)\right|M(u,v)-2\rho M(u,v)-\frac{\delta M(u,v)}{2(1+\delta)}R_n.
\end{align*}
Substituting $R_n=|X(\tau_n)-x(\tau_n)|+\rho$,
\[
\frac{\delta M(u,v)}{2(1+\delta)}\tau_n\geq\left|X(\tau_n)-x(\tau_n)\right|M(u,v)-2\rho M(u,v)- \frac{\delta M(u,v)}{2(1+\delta)}\left(\left|X(\tau_n)-x(\tau_n)\right|+\rho\right),
\]
\[
\frac{\delta}{2(1+\delta)}\tau_n\geq \frac{2+\delta}{2(1+\delta)}\left|X(\tau_n)-x(\tau_n)\right|-\frac{4+5\delta}{2(1+\delta)}\rho,
\]
\[
\frac{\delta}{2+\delta}+\frac{4+5\delta}{2+\delta}\frac{\rho}{\tau_n}\geq \frac{|X(\tau_n)-x(\tau_n)|}{\tau_n}.
\]
Since $0<\tau_n\leq t_n$ and $|X(\tau_n)-x(\tau_n)|=|X(t_n)-x(t_n)|$,
\[
\frac{\delta}{2+\delta}+\frac{4+5\delta}{2+\delta}\frac{\rho}{\tau_n}\geq \frac{|X(t_n)-x(t_n)|}{t_n}.
\]
We obtain $\frac{4+5\delta}{2+\delta}\frac{\rho}{\tau_n}\leq\frac{\delta}{2}$ for sufficiently large $n\in\mathbb{N}$ by $\tau_n\rightarrow\infty$\ \ as\ \ $n\rightarrow\infty$. Also, since $\frac{\delta}{2+\delta}<\frac{\delta}{2}$ by $\delta>0$, it follows that
\[
\frac{|X(t_n)-x(t_n)|}{t_n}<\frac{\delta}{2}+\frac{\delta}{2}=\delta.
\]
This is in contradiction to $|X(t_n)-x(t_n)|\geq\delta t_n$. Therefore, Lemma \ref{order of x(t)} holds.
\end{proof}

\begin{lemma}
\label{positivity}
Let $(u,v)$ be a global solution in Proposition \ref{rigidity} (not necessary $P=0$). Then, there exists a constant $A>0$ such that
\[
A M(u,v)\leq L(u,v),
\]
for all time $t \in [0,\infty)$.
\end{lemma}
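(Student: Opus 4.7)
The plan is a direct contradiction argument using the precompactness of the modulated orbit. First note that both $L$ and $M$ are translation invariant, so precompactness of $\{(u(t,\cdot+x(t)),v(t,\cdot+x(t)))\}$ in $H^1\!\times\!H^1$ implies that the real-valued map $t\mapsto L(u(t),v(t))$ takes values in the compact set $\{L(f,g):(f,g)\in K\}\subset[0,\infty)$, and likewise $M(u,v)(t)$ is conserved under the flow.

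If $M(u_0,v_0)=0$, then $(u_0,v_0)=(0,0)$ and the inequality holds trivially with any $A>0$. So assume $M(u_0,v_0)>0$. Suppose, towards a contradiction, that no constant $A>0$ works. Then there exists a sequence $t_n\in[0,\infty)$ with
\[
\frac{L(u(t_n),v(t_n))}{M(u,v)(t_n)}\longrightarrow 0,
\]
and since $M$ is conserved and positive, this gives $L(u(t_n),v(t_n))\to 0$. By the precompactness hypothesis, after passing to a subsequence,
\[
(u(t_n,\cdot+x(t_n)),v(t_n,\cdot+x(t_n)))\longrightarrow(u_\infty,v_\infty)\quad\text{in }H^1\!\times\!H^1
\]
for some $(u_\infty,v_\infty)\in H^1\!\times\!H^1$. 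By translation invariance and continuity of $L$ on $H^1\!\times\!H^1$, we conclude $L(u_\infty,v_\infty)=0$, which forces $\nabla u_\infty\equiv 0$ and $\nabla v_\infty\equiv 0$; since $(u_\infty,v_\infty)\in H^1(\R^5)\!\times\!H^1(\R^5)$, this implies $(u_\infty,v_\infty)\equiv(0,0)$. On the other hand, by translation invariance and continuity of $M$,
\[
M(u_\infty,v_\infty)=\lim_{n\to\infty}M(u(t_n),v(t_n))=M(u_0,v_0)>0,
\]
a contradiction. Hence there exists $\eta>0$ with $L(u(t),v(t))\geq\eta$ for all $t\geq 0$, and taking $A:=\eta/M(u_0,v_0)$ gives the desired inequality.

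There is no real obstacle here beyond invoking the hypotheses correctly; the argument is a standard compactness-plus-rigidity-of-the-zero-element observation. The only point to be careful about is exploiting the translation invariance of $L$ and $M$ so that the $H^1\!\times\!H^1$ precompactness of the modulated orbit $(u(t,\cdot+x(t)),v(t,\cdot+x(t)))$ transfers to control of $L(u(t),v(t))$ itself.
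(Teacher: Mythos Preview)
Your proof is correct and follows essentially the same contradiction argument as the paper's own proof: assume the inequality fails along a sequence $t_n$, deduce $L(u(t_n),v(t_n))\to 0$, use precompactness of the modulated orbit to extract a limit, and conclude the limit must be zero, contradicting mass conservation. If anything, your version is slightly more explicit than the paper's in spelling out the role of translation invariance of $L$ and $M$ when passing from the translated orbit to the original one.
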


\begin{proof}
If $(u,v)=(0,0)$, the statement holds. Let $(u,v) \neq (0,0)$.
For simplicity, we set $L(t):=L(u(t),v(t))$ and $M:=M(u,v)$. If the statement fails, for any $n \in \N$, there exists $t_n>0$ such that
\begin{align*}
	\frac{1}{n} M > L(t_n).
\end{align*}
This implies that $L(t_n) \to 0$ as $n \to \infty$. Taking a subsequence, $(u(t_n),v(t_n))$ converges to $0$ in $H^1 \times H^1$ by the compactness of the orbit. This means $(u,v) = (0,0)$. This is a contradiction. 
\end{proof}


We show Proposition \ref{rigidity}. 

\begin{proof}[Proof of Proposition \ref{rigidity}]
In the case $K(u_0,v_0)=0$, we have $(u_0,v_0)=(0,0)$ by the definition of $\mu_\omega^{20,8}(=I_\omega(\phi_\omega,\psi_\omega))$. Let $K(u_0,v_0)>0$. We lead to contradiction. By Lemma \ref{order of x(t)}, for any $\eta>0$, there exists $T_0=T_0(\eta)>0$ such that
\[
\left|x(t)-X(t)\right|\leq\eta t
\]
for any $t\geq T_0$. For $R>0$, let $\chi_R$ be a radial function as in the proof of Lemma \ref{order of x(t)}.
We define 
\begin{align*}
I(t)=2\text{Im}\int_{\mathbb{R}^5}\chi_R\left(\left|x-X(t)\right|\right)\left(x-X(t)\right)\cdot\left( \nabla u \overline{u}+\frac{1}{2}\nabla v \overline{v}\right)dx.
\end{align*}
Then,
\begin{align*}
|I(t)|&\leq2\left|\int_{\mathbb{R}^5}\chi_R\left(\left|x-X(t)\right|\right)\left(x-X(t)\right)\cdot\left( \nabla u \overline{u}+ \frac{1}{2}\nabla v \overline{v}\right)dx\right|\\
&\leq 4R\int_{|x-X(t)|\leq2R} \left(|\nabla u||u|+\frac{1}{2}|\nabla v||v|\right)dx\\
&\leq \widetilde{c}R
\end{align*}
for any $0\leq t<\infty$. We calculate $I'(t)$.
\begin{align*}
I'(t) 
	&= -2\text{Im}\int_{\mathbb{R}^5}\chi_R'\left(\left|x-X(t)\right|\right) \frac{(x-X(t))\cdot \frac{2\widetilde{P}(t)}{M}}{|x-X(t)|}\left( x-X(t)\right)\cdot\left( \nabla u \overline{u}+ \frac{1}{2}\nabla v \overline{v}\right)dx
	\\
	& \quad-\frac{4\widetilde{P}(t)}{M}\cdot\text{Im}\int_{\mathbb{R}^5} \chi_R\left(\left|x-X(t)\right|\right) \left( \nabla u \overline{u}+ \frac{1}{2}\nabla v \overline{v}\right)dx
	\\
	&\quad+ 2\int_{\mathbb{R}^5}\chi_R\left(\left|x-X(t)\right|\right)\left(x-X(t)\right)\cdot\partial_t\text{Im}\left(\nabla u\overline{u}+\frac{1}{2}\nabla v\overline{v}\right)dx
	\\
	&=:S_1+S_2+J
	\\
	&=:S_1 +\widetilde{S}_2 -\frac{4\widetilde{P}(t)\cdot P}{M} + J
\end{align*}
By the assumption of $P=0$, the third term in the last disappears. 
Now, we want to show $I'(t) \geq 4K(u,v) - \text{error}$. From direct calculations, we obtain
\begin{align*}
|S_1|
	&\leq C(M,I_\omega(u,v))\int_{R\leq|x-X(t)|\leq2R}\left( |\nabla u||u|+ |\nabla v||v|\right)dx\\
	&\leq C(M,I_\omega(u,v))\int_{R\leq|x-X(t)|\leq2R}\left(|\nabla u|^2+|u|^2+|\nabla v|^2+|v|^2\right)dx,
\end{align*}
and
\begin{align*}
|\widetilde{S}_2|
	&=\left|\frac{4\widetilde{P}(t)}{M}\cdot\text{Im}\int_{\mathbb{R}^5} \left\{\chi_R\left(\left|x-X(t)\right|\right)-1\right\} \left( \nabla u \overline{u}+ \frac{1}{2}\nabla v \overline{v}\right)dx\right|\\
	&\leq C(M,I_\omega(u,v))\int_{R\leq|x-X(t)|}\left(|\nabla u|^2+|u|^2+|\nabla v|^2+|v|^2\right)dx.
\end{align*}
Moreover, we also have
\begin{align*}
	J
	&=4K(u,v)
	\\
	&+4 \int_{\mathbb{R}^5}\frac{\chi_R'(|x-X(t)|)}{|x-X(t)|}\left\{|(x-X(t))\cdot\nabla u|^2+\frac{\kappa}{2}|(x-X(t))\cdot\nabla v|^2 \right\}dx \notag \\
	& +4\int_{\mathbb{R}^5}\left\{\chi_R\left(|x-X(t)|\right)-1\right\}\left(|\nabla u|^2+\frac{\kappa}{2}|\nabla v|^2\right)dx \notag \\
	& +\sum_{k=1}^5\sum_{j=1}^5\int_{\mathbb{R}^5}\chi_R\left(\left|x-X(t)\right|\right)\left(x-X(t)\right)\cdot\bm{e_k}\partial_{jjk}\left(|u|^2+\frac{\kappa}{2}|v|^2\right)dx \notag \\
	& +\int_{\mathbb{R}^5}\chi_R'(|x-X(t)|)\,|x-X(t)|\,\text{Re}(v\overline{u}^2)dx \notag \\
	& +5\int_{\mathbb{R}^5}\left\{\chi_R(|x-X(t)|)-1\right\}\text{Re}(v\overline{u}^2)dx,
\end{align*}
where $\bm{e_k}$ denotes the $k$-th standard basis vector.
Estimating these terms except for $4K$ and combining the estimates of $S_1$ and $\widetilde{S}_2$
\begin{align*}
I'(t)
&\geq J-|S_1|-|\widetilde{S}_2|\\
&\geq 4K(u,v)-C(M,I_\omega(u,v))\int_{R\leq|x-X(t)|}(|\nabla u|^2+|u|^2+|\nabla v|^2+|v|^2+|vu^2|)dx
\end{align*}
for $R>1$. By Lemma \ref{Precompactness of the flow implies uniform localization}, there exists $R_0>1$ such that
\begin{align*}
C(M,I_\omega(u,v))\int_{R_0\leq|x-x(t)|}\left(|\nabla u|^2+|\nabla v|^2+|u|^2+|v|^2+|vu^2|\right)dx
\qquad\\  <\frac{1}{4}\min\{I_\omega(\phi_\omega,\psi_\omega)-I_\omega(u,v),cM(u,v)\}
\end{align*}
for any $0\leq t<\infty$. If we take $\displaystyle R=R_0+\sup_{t\in[T_0,T_1]}|X(t)-x(t)|>1$, then $\displaystyle|x-x(t)|\geq|x-X(t)|-|X(t)-x(t)|\geq R-\sup_{t\in[T_0,T_1]}|X(t)-x(t)|=R_0$ for $x$ with $|x-X(t)|>R$ and $t\in[T_0,T_1]$, where $T_1>T_0$ is chosen later. Thus,
\begin{align*}
C(M,&I_\omega(u,v))\int_{R\leq|x-X(t)|}\left(|\nabla u|^2+|\nabla v|^2+|u|^2+|v|^2+|vu^2|\right)dx\\
&\leq C(M,I_\omega(u,v))\int_{R_0\leq|x-x(t)|}\left(|\nabla u|^2+|\nabla v|^2+|u|^2+|v|^2+|vu^2|\right)dx\\
&\leq\frac{1}{4}\min\{I_\omega(\phi_\omega,\psi_\omega)-I_\omega(u,v),cM(u,v)\}.
\end{align*}
Therefore, it holds from the above inequalities and the positivity of $K$, Lemmas \ref{positivityK} and \ref{positivity} that
\begin{align*}
I'(t)&\geq\frac{1}{4}\min\{I_\omega(\phi_\omega,\psi_\omega)-I_\omega(u,v),cM(u,v)\}.
\end{align*}
for $R>1$ and $t\in[T_0,T_1]$. Integrating this inequality in $[T_0,T_1]$,
\begin{align*}
\frac{1}{4}\min\{I_\omega(\phi_\omega,\psi_\omega)-I_\omega(u,v),cM(u,v)\}&(T_1-T_0)
\\&\leq I(T_1)-I(T_0)\\
&\leq|I(T_1)|+|I(T_0)|\\
&\leq2\widetilde{c}R\\
&=2\widetilde{c}\left(R_0+\sup_{t\in[T_0,T_1]}|X(t)-x(t)|\right)\\
&\leq2\widetilde{c}\left(R_0+T_1\eta\right).
\end{align*}
This inequality is contradiction if we take $\eta>0$ sufficiently small and $T_1>0$ sufficintly large.  
\end{proof}

\begin{acknowledgement}
The second author was partially supported by Grant-in-Aid for Early-Career Scientists 18K13444. 
\end{acknowledgement}


\end{document}